\newtheorem{thm}{Theorem}[section]
\newtheorem{cor}[thm]{Corollary}
\newtheorem{lem}[thm]{Lemma}
\newtheorem{prop}[thm]{Proposition}
\theoremstyle{definition}
\newtheorem{defn}[thm]{Definition}
\theoremstyle{remark}
\newtheorem{rem}[thm]{Remark}
\newtheorem{ex}[thm]{Example}
\numberwithin{equation}{section}
\theoremstyle{definition}
\begin{document}

\newcommand{\vc}[1]{\boldsymbol{#1}}
\newcommand{\di}{\mathrm{d}}
\newcommand{\de}{\partial}
\newcommand{\ru}{r_1}
\newcommand{\tu}{\theta_1}
\newcommand{\rd}{r}
\newcommand{\td}{\theta}
\addtolength{\hoffset}{-0.8cm}
\addtolength{\textwidth}{1.8cm}
\newcommand{\ds}{\displaystyle}
\newcommand{\eps}{\varepsilon}
\newcommand{\xt}{\textrm{exit}}
\newcommand{\nt}{\textrm{entry}}
\newcommand{\tF}{\tilde{F}}
\newcommand{\tsi}{\tilde{\sigma}}
\newcommand{\ty}{\tilde{y}}
\newcommand{\new}{\textcolor{red}}
\newcommand{\newB}{\textcolor{blue}}
\newcommand{\N}{\mathbb{N}}


\title[A General Mechanism  of Diffusion in Hamiltonian Systems]{A General Mechanism  of Diffusion in Hamiltonian Systems: Qualitative Results}

\author[Marian Gidea]{Marian\ Gidea$^\dag$}
\address{Yeshiva University, Department of Mathematical Sciences, New York, NY 10016, USA }
\email{Marian.Gidea@yu.edu}
\thanks{$^\dag$ Research of M.G. was partially supported by NSF grant  DMS-0635607, and by  the  Alfred P. Sloan Foundation grant G-2016-7320}
\author[Rafael de la Llave]{Rafael de la Llave${^\ddag}$}
\address{School of Mathematics, Georgia Institute of Technology, Atlanta, GA 30332, USA}
\email{rafael.delallave@math.gatech.edu}
\thanks{$^\ddag$ Research of R.L. was partially supported by NSF grant  DMS-1500943}
\author[Tere Searaf]{Tere Seara${^\flat}$}
\address{Departament de Matem\`{a}tica Aplicada I, Universitat Polit\`{e}cnica de Catalunya, Diagonal 647, 08028 Barcelona, Spain}
\email{Tere.M-Seara@upc.edu}
\thanks{$^\flat$ Research of T.S. was partially supported by Russian Scientific Foundation grant 14-41-00044.}

\begin{abstract}

We present a general mechanism to establish the existence of diffusing orbits in a large class of nearly integrable  Hamiltonian systems. Our approach relies on  successive applications of the `outer dynamics'  along homoclinic orbits to a normally hyperbolic invariant manifold.
The information on the outer dynamics is encoded by a geometrically defined map, referred to as the  `scattering map'.

We find pseudo-orbits of the scattering map that keep advancing in some  privileged  direction.
Then we use the recurrence property of the `inner dynamics', restricted to the normally hyperbolic invariant manifold,
to return to those pseudo-orbits.
Finally, we apply topological methods  to show the existence of true orbits that follow the successive applications of the  two dynamics.

This method  differs, in several crucial aspects,  from earlier works.
Unlike the well known `two-dynamics' approach, the method we present
relies on the outer dynamics alone.
There are virtually no assumptions on the inner dynamics, as its
invariant objects  (e.g., primary and secondary tori, lower dimensional hyperbolic tori and their
stable/unstable manifolds, Aubry-Mather sets) are not used at all.

The method applies to unperturbed  Hamiltonians of arbitrary degrees of freedom  that are not necessarily convex.
In addition, this mechanism is easy to verify (analytically or numerically)  in concrete examples, as well as to establish diffusion in generic systems.

We  include several applications, such as bridging large gaps in a priori unstable models in any dimension, and establishing diffusion in cases when the inner
dynamics is a non-twist map.
\end{abstract}

\keywords{Arnold diffusion; normally hyperbolic invariant manifolds; shadowing.}

\maketitle

\section{Introduction}

In this paper we develop a general but simple method to show the existence of diffusing orbits in nearly integrable Hamiltonian systems in any dimension.
The main  requirement for the system is to have a  normally hyperbolic invariant manifold whose stable and unstable manifolds intersect transversally along
a transverse homoclinic manifold.
In this setting, one can geometrically define   a map  on the normally hyperbolic invariant manifold,  referred to as the scattering map \cite{DelshamsLS08a},
which accounts for the `outer' dynamics along homoclinic orbits.
The scattering map assigns  to the foot-point of an unstable fiber  the foot-point of a stable fiber, provided the two fibers meet at a unique point in the
homoclinic manifold.
On the  `inner dynamics', defined by the restriction  to the normally hyperbolic invariant manifold, we only require to satisfy  Poincar\'e recurrence.

The main results of this paper can be summarized as follows:

(i) For every pseudo-orbit\footnote{In this paper we use  the term pseudo-orbit  in the sense of an
orbit  of an iterated function system or poly-system; see  \cite{Marco2008}.} obtained by successively applying the scattering map,
there exists a true orbit  that shadows it (its intermediate points follow the pseudo-orbit).
The statement  is given in Theorem \ref{lem:main1}.

(ii) For a large class of nearly integrable Hamiltonian systems, which satisfy some verifiable conditions that occur generically, there  exist diffusing orbits
that travel a distance that is independent of the size of the perturbation.
More precisely, there exists a vector field whose integral curves approximate the pseudo-orbits of the scattering map.
If this vector field is non-trivial at some point, then  there exists an  integral curve that travels a distance of
order $1$ within the normally hyperbolic invariant manifold, and   hence a  pseudo-orbit of the scattering map that goes along that curve.
Assuming the inner dynamics satisfies Poincar\'{e} recurrence, there  also exists a true orbit that shadows it.
The statement is given in Theorem \ref{thm:main}.

(iii) A general type of the shadowing lemma, needed to establish the above results.
This lemma says  that for every infinite pseudo-orbit generated by alternatively applying the scattering map and the inner dynamics for sufficiently long time,
there exists a true orbit that shadows that pseudo-orbit.
The statement is given  in Lemma \ref{lem:key1-alternative}.

The above results remain valid if one considers several transverse homoclinic manifolds rather than a single one, and  hence  several scattering maps.
They also remain valid if one considers a sequence of manifolds (which may be of different topologies) chained via different heteroclinic connections,
which can also be described via scattering maps.

For the purpose of establishing the existence of diffusing orbits,  the assumption that the inner dynamics satisfies  Poincar\'e recurrence
on some bounded domain can be eliminated.
If there is no such domain, then there exist diffusing orbits determined just by the inner dynamics.

As a  concrete application of this method we obtain a qualitative result  on the existence of diffusing orbits in \emph{a priori unstable Hamiltonian
systems}  (see \cite{ChierchiaG94})  of any dimension, under verifiable conditions on the perturbation  that are  generically satisfied,
and under some mild conditions on  the unperturbed system.
In particular, the unperturbed Hamiltonian does not need to be convex.
The main requirement on the Hamiltonian  system is that we can compute perturbatively the scattering map.

The salient features  of the mechanism outlined above are the following:
\begin{enumerate}
\item
We do not  require any information on the inner dynamics.
In particular, we can obtain diffusing orbits whose action variable crosses resonant surfaces of any multiplicity.

This is a significant departure from previous approaches which rely on a detailed analysis of the invariant objects for the inner dynamics: primary KAM tori,
secondary tori,  lower dimensional hyperbolic tori and their stable and unstable manifolds, Aubry-Mather sets, etc.
In fact, we do not  need the inner dynamics to satisfy a twist condition, which is a key assumption  in previous geometric and  variational approaches.
In particular, the present mechanism does not present the
\emph{large gap problem}.
\item
The normally hyperbolic invariant manifold as well as its stable and unstable manifolds can be of arbitrary dimensions.
\item  We can take advantage of the existence of several scattering maps.
\item
Our method can be applied to concrete systems --  e.g,  the planar elliptic restricted three-body problem,  the spatial circular restricted three-body problem --,
and, further, can be implemented in computer assisted proofs.
See the  related papers \cite{CapinskiGL14,DelshamsGR2016}.
\item
Although the main application in this paper is on diffusion in a priori unstable systems, we  expect that this method can be useful when applied to \emph{a
priori stable systems}, as well as to   \emph{infinite dimensional systems}, once the existence of suitable normally hyperbolic invariant manifolds
(called normally hyperbolic cylinders in \cite{KaloshinZ12a, KaloshinZ12b, KaloshinBZ11, Marco13}) and their homoclinic channels  is established. See Remark \ref{rem:muepsilon}.
\end{enumerate}

We compare our method here with some previous approaches to the diffusion problem,  for different types of Hamiltonian systems.

It is customary to distinguish between  geometric methods and  variational methods.
The method in this paper is geometric, so we first compare it with some related approaches.

For nearly integrable Hamiltonian systems of two-and-a-half degrees of freedom, the existence of diffusion has  been established  via geometric methods in
\cite{DelshamsLS00,DelshamsLS2006}, by using the existence of KAM tori, primary and secondary, along the normally hyperbolic invariant manifold.
The perturbation in \cite{DelshamsLS2006} is assumed to be a trigonometric polynomial in the angle variable, but \cite{DelshamsH2009}  eliminates this assumption.
The integrable Hamiltonian is not assumed to be convex, which seems to be the standard assumption in many variational  approaches.
Similar type of results have been  obtained in  \cite{GideaL06,GideaL06b}  with the use of the method of correctly-aligned windows.
This allows to simplify the proofs, and to obtain explicit estimates on the diffusion speed.

The case of higher dimensional Hamiltonian systems poses a difficulty that is not present in the case of two-and-a-half degrees of freedom:
there are points where the resonances have higher multiplicity.
The technique involved in \cite{DelshamsLS00,DelshamsLS2006}  uses heavily that
in the neighborhood of resonances of multiplicity $1$ one can  introduce a normal form
which is integrable and  can be analyzed with great accuracy to obtain secondary tori.
Unfortunately, it is well known that multiple resonances, that is,  resonances of
multiplicity greater or equal than $2$, lead to normal forms that  are not
integrable and require  other techniques to be analyzed (see \cite{Haller97,Haller99}).

In \cite{DelshamsLS16} the authors adapted the methods used in two-and-a-half degrees of freedom
to show instability in higher dimensions.
Their approach relies on the basic fact that  multiple resonances happen in
subsets of codimension greater than $1$ in the space of actions  and therefore the diffusing trajectories can
contour them.

Now we mention some other types of approaches to the diffusion problem.

Geometric  methods  based on normally hyperbolic invariant manifolds that use  the separatrix map rather than the scattering map
appear in \cite{BolotinTreschev1999,Treschev02a,Treschev02b,Treschev02c,
Treschev04,Treschev12,Piftankin2006,PiftankinT07}.
Other geometric methods have  been applied in
\cite{Bounemoura12,DLS03,Kaloshin03,Llave04,GelfreichT2008,DGLS08,Marco13,Zhang11}.

Several authors have used  variational methods (either
local variational methods or global variational methods)
alone or in  combination with geometric methods,  to obtain results on diffusion.
This is the case, for example, in \cite{Bessi96,Bessi97,Mather04,Mather12,ChengY2004,ChengY2009,Bernard08, Cheng12,ChengX2015,KaloshinBZ11,KaloshinZ12a,KaloshinZ12b,KaloshinZ14, BessiCV01}.
We mention the  paper \cite{Tennyson82} who suggests several other mechanisms that
should be at play. It seems to be a  very challenging problem to make
rigorous the heuristic discussions on statistical and quantitative properties
of different instability mechanisms in the heuristic literature
\cite{Chirikov79,Lieberman83,Tennyson82}.

We also acknowledge that   many of the methods and ideas that appear in the works on the Arnold diffusion problem are owed to John Mather,
whose influence to the field cannot be overstated \cite{Mather04,Mather10,Mather12}.

The structure of this paper is as follows.
In Section 2 we review some background on normally hyperbolic invariant manifolds, scattering map, and recurrent dynamics.
In Section 3 we provide two  general results on the existence of diffusing orbits --
 Theorem \ref{lem:main1} and Theorem \ref{thm:main} --, as well as some corollaries.
We also provide a general shadowing lemma -- Lemma \ref{lem:key1-alternative} -- that is used in proving these results.
An application to establish the existence of diffusing orbits in a class of nearly integrable a priori unstable Hamiltonian systems that are
multi-dimensional both in the center and in the hyperbolic directions is given in Section 4.
A novelty is that the unperturbed  system corresponds to a Hamiltonian which is not necessarily convex, and that the inner dynamics does not need to satisfy  a twist condition.
Section 5 contains the proofs of the results stated in Section 3.
An Appendix with  definitions and tools that are utilized in the paper is included for the convenience of the reader.

\section{Background}

In this section, we cover some standard material
that will be used in the statement of the results. All the material
will be well known to experts.

\subsection{Normally hyperbolic invariant manifolds and scattering maps}\label{subsection:background}

Consider a discrete-time dynamical system given by the action of a $C^r$-smooth map $f$ on a   $C^r$-smooth manifold $M$, of dimension $m$, where $r\geq 1$.

Assume  that $\Lambda$ is a  normally hyperbolic invariant manifold (NHIM) in $M$, of dimension $n_c$:
this means that the tangent bundle of $M$ restricted to $\Lambda$ splits as a Whitney sum of sub-bundles
$TM_{\mid \Lambda}=T\Lambda\oplus E^u\oplus E^s$ which are invariant under $Df$, and that $(Df)_{\mid^u}$
expands more than $(Df)_{\mid T\Lambda}$, while $(Df)_{\mid E^s}$  contracts more than  $(Df)_{\mid T\Lambda}$.
We  also assume that $\Lambda$ is compact or that $f$ is uniformly $C^r$ in a neighborhood of $\Lambda$.
The rather standard definition is given in Appendix  \ref{subsection:NHIM}.

In the sequel we assume that the stable and unstable bundles associated to the normally hyperbolic structure have dimensions $n_u, n_s>0$,
respectively, where $n_c+n_u+n_s=m$.
(In many applications concerning diffusion in  nearly integrable Hamiltonian systems
we have $n_u=n_s =n$ and $n_c=$ even number, hence $m=$ even number.)

\begin{rem}
In the general theory of normally hyperbolic
manifolds one does not have the above restriction  on  dimensions, but for
symplectic systems, this is natural.  We also note that in the
symplectic case, it is natural to assume that the
stable and unstable rates \cite{Fenichel74} and that the
forward rates in the tangent direction are the same.
In such a case, one has automatically that the invariant
manifold is symplectic. See \cite{DelshamsLS08a}.
\end{rem}

The normal hyperbolicity  of $\Lambda$ implies that there exist stable and unstable invariant manifolds $W^s(\Lambda)$, $W^u(\Lambda)$ of $\Lambda$.
The exponential contraction and expansion rates  of $Df$ along the stable and unstable bundles, and on $T\Lambda$,
determine an integer $\ell$ with $0<\ell\leq r$, such that $\Lambda$ is $C^{\ell}$-smooth, and $W^s(\Lambda)$,
$W^u(\Lambda)$ are $C^{\ell-1}$-smooth. The stable and unstable manifolds $W^s(\Lambda)$, $W^u(\Lambda)$ are foliated
by stable and unstable fibers $W^s(x)$, $W^u(x)$, respectively, with $x\in\Lambda$,
which are $C^{r}$-smooth $1$-dimensional manifolds. The corresponding foliations are however only $C^{\ell-1}$-smooth. See Appendix  ~\ref{subsection:NHIM}.

From now on we assume that $r$ and the normally hyperbolic structure are so that $\ell\geq 2$.

Let $\Gamma\subseteq W^s(\Lambda)\cap W^u(\Lambda)$ be a $C^{\ell-1}$-smooth homoclinic manifold.
Consider the  wave maps
\begin{align}
\Omega^{-}: \Gamma \subset W^u(\Lambda)\to \Omega^{-}(\Gamma)\subseteq \Lambda,\, \Omega^{-}(x)=x^-,\\
\Omega^{+}: \Gamma \subset W^s(\Lambda)\to \Omega^{+}(\Gamma)\subseteq \Lambda,\, \Omega^{+}(x)=x^+,
\end{align}
where $x^-$ is the unique point in $\Lambda$ such that $x\in W^u(x^-)$, and $x^+$ is the unique point in $\Lambda$ such that $x\in W^s(x^+)$.

Under certain restrictions  on $\Gamma$, which are given explicitly in Appendix \ref{subsection:NHIM}, the wave maps $\Omega^\pm$ are
$C^{\ell-1}$-diffeomorphisms from $\Gamma$ to their images.
Such  a homoclinic manifold is referred to as a \emph{homoclinic channel}.

Assuming that $\Gamma$ is a homoclinic channel, one can define a $C^{\ell-1}$ diffeomorphism
$$
\sigma:\Omega^{-}(\Gamma)\to\Omega^{+}(\Gamma), \ \mbox{given by} \ \sigma=\Omega^{+} \circ (\Omega^{-})^{-1},
$$
where $\Omega^{-}(\Gamma), \Omega^{+}(\Gamma)$ are open sets in  $\Lambda$.
That is, $\sigma(x^-)=x^+$, for every $x^-\in \Omega^{-}(\Gamma)$ defined as above.
The mapping $\sigma$ is referred to as the scattering map associated to the homoclinic channel~$\Gamma$.
For details on this set-up and general properties of the scattering map  see Appendix \ref{subsection:NHIM}.

We shall note that there is no actual orbit of the system that goes from $x^-$ to $\sigma(x^-)=x^+$.
Rather,   the geometric object that corresponds to $\sigma(x^-)=x^+$ is the heteroclinic orbit $\{f^n(x)\}_{n\in\mathbb{Z}}$ of $x$,
which approaches asymptotically  $f^{n}(x^-)$ backwards in time,  as $n\to-\infty$, and respectively
$f^{n}(x^+)$ forward in time, as $n\to+\infty$.
We remark that, if  we denote by $\sigma^\Gamma$ the scattering map associated to  the  homoclinic channel~$\Gamma$,
then for each $k\in\mathbb{Z}$, $f^k(\Gamma)$ is also a homoclinic channel, and the corresponding scattering map
$\sigma ^{f^k(\Gamma)}$ is related to $\sigma^\Gamma$ by the invariance relation
\begin{equation}\label{eqn:scatteringinv}
\sigma ^{f^k(\Gamma)}=f^k\circ \sigma ^{\Gamma}\circ f^{-k}.
\end{equation}
While $\sigma^\Gamma$ and $\sigma^{f^k(\Gamma)}$ are technically different scattering maps,
they are geometrically the same, as they are defined via the same homoclinic channel (up to iterations by the map $f$).
Of course, homoclinic channels  that are not obtained from one another via iteration  yield, in general, to scattering maps that are geometrically different.

In many examples, the scattering map can be computed explicitly via perturbation theory \cite{DelshamsL2000,DelshamsLS2006,DelshamsLS2006b},
or numerically \cite{CanaliasDMR06,DelshamsGR13,DelshamsGR2016,CapinskiGL14}.

\subsection{Normally hyperbolic invariant manifolds and scattering maps in a symplectic perturbative setting}
\label{subsection:perturbative}

Assume now that $(M,\omega)$ is a symplectic manifold, and $f_\eps:M\to M$ is a $C^r$-family of  symplectic maps,
where $\eps\in(\eps_0,\eps_0)$, for some $\eps_0>0$.

For example, one can think of $f_\eps$ as being the time-$1$ map  associated to the Hamiltonian flow $\phi_{t,\eps}$ corresponding to a Hamiltonian
$H_\eps:M\to \mathbb{R}$ of the form
\begin{equation}\label{Heps}
H_\eps=H_0+\eps H_1,
\end{equation}
where $H_0$ is an integrable Hamiltonian;
in this case the maps $f_\eps$ with $\eps\neq 0$ can be viewed as $\eps$-perturbations of the map $f_0$,
which is the time-$1$ map for the unperturbed Hamiltonian flow of $H_0$.

Assume that there exists a normally hyperbolic invariant manifold $\Lambda_\eps\subseteq M$   for $f_\eps$, of even dimension $n_c$,
for all $\eps\in(-\eps_0,\eps_0)$, and that $\dim W^u(\Lambda_\eps)=n_c+n_u=\dim W^s(\Lambda_\eps)=n_c+n_s$.
Assume that $\Lambda_\eps$ is symplectic and  denote by $J$ the linear operator associated to $\omega_{\mid\Lambda_{\eps}}$ by the metric.
Then the  map  $f_\eps$ is also symplectic on $\Lambda_\eps$.

Assume that  for each  $\eps\in(-\eps_0,\eps_0)$ there exists  a homoclinic channel  $\Gamma_\eps$   for $f_\eps$ that depends $C^{\ell-1}$-smoothly on $\eps$.
Then the scattering map
$\sigma_\eps:\Omega^{-}(\Gamma_\eps)\to \Omega^{-}(\Gamma_\eps)$ is also symplectic and $C^{\ell-1}$ (see \cite{DelshamsLS08a,DelshamsGLS2008}).

Now we assume that $\Lambda_\eps$ can be parametrized
via a $C^{\ell}$-diffeomorphism $k_\eps:\Lambda_0\to\Lambda_\eps$,
for $\eps\in(-\eps_0,\eps_0)$, where  $\Lambda_0$  is the normally hyperbolic manifold for the unperturbed map $f_0$, and $k_0=\textrm{Id}_{\Lambda_0}$.
This happens, for example, when   the $\Lambda_\eps$'s are obtained by the persistence of normal hyperbolicity under sufficiently small perturbations
(see \cite{DelshamsLS2006}).

Via the  parametrizations $k_\eps$, each map $f_\eps$ induces a map $\tilde f_\eps$ on $\Lambda_0$ by
\[
\tilde f_\eps= k_\eps^{-1}\circ (f_\eps)_{\mid\Lambda_\eps}\circ  k_\eps.
\]

The scattering map $\sigma_\eps:\Omega^{-}(\Gamma_\eps)\subset \Lambda_\eps \to \Omega^{+}(\Gamma_\eps)\subset \Lambda_\eps $ can also be expressed
in terms of the reference manifold $\Lambda_0$ by
$$
\tilde \sigma_\eps:   k_\eps^{-1}(\Omega^{-}(\Gamma_\eps))\subset\Lambda_0 \to k_\eps^{-1}(\Omega^{+}(\Gamma_\eps))\subset \Lambda_0
$$
given by
\[
\tilde \sigma_\eps=k_\eps^{-1}\circ\sigma_\eps\circ k_\eps.
\]
We will refer to the map $\tilde \sigma_\eps$ also as the `scattering map'.

In this  setting, one also has an unperturbed scattering map  $\sigma_0$ on  the unperturbed manifold $\Lambda_0$, associated to the  homoclinic channel
$\Gamma_0$ contained in the intersection between the stable and unstable manifolds of $\Lambda_0$.
Of course, in the unperturbed case one  has   $\tilde \sigma_0=k_0^{-1}\circ \sigma_0\circ k_0=  \sigma_0$.
Expressing both the perturbed and the unperturbed scattering map  as maps on the same (unperturbed) manifold is quite advantageous,
as one can compare them relative to the same coordinate system.


For a  Hamiltonian system $H_\eps$ as in \eqref{Heps},   \cite{DelshamsLS08a}
provides  a perturbative formula for the scattering map:
\begin{equation}\label{eqn:scattering_perturb}
\tilde\sigma_\eps=\tilde \sigma _0+\eps J\nabla S \circ \tilde \sigma _0+O(\eps ^2)
\end{equation}
where
$S$ is a real valued $C^\ell$-function on $\Lambda_0$
that can be computed explicitly in terms of convergent integrals of the perturbation evaluated along homoclinic trajectories of the  unperturbed system
(see \cite{DelshamsLS08a,GideaL16}):
 \begin{align}\label{eq:S-def}
S\left( x\right) & =\lim_{T\rightarrow+\infty}\int
_{-T}^{0}\left [\frac{dH_\eps}{d\eps}_{\mid\eps=0}\circ \phi_{t}\circ\left( \Omega_{-}^{\Gamma_0} \right) ^{-1}\circ \sigma_0^{-1}(x)  \right. \\
&\left. \qquad\qquad\qquad-\frac{dH_\eps}{d\eps}_{\mid\eps=0}\circ \phi_{t} \circ \sigma_0 ^{-1}(x)\right]dt \notag \\
& +\lim_{T\rightarrow+\infty}\int_{0}^{T}\left [\frac{dH_\eps}{d\eps}_{\mid\eps=0}\circ \phi_{t}\circ\left( \Omega_{+}^{\Gamma_0} \right) ^{-1}(x)\right.  \notag \\
& \qquad\qquad\qquad\left.-\frac{dH_\eps}{d\eps}_{\mid\eps=0}\circ \phi_{t}(x)\right]dt .  \notag
\end{align}
Here $\phi_t=\phi_{t,0}$ is the flow corresponding to the unperturbed Hamiltonian~$H_0$.

Note that,  by definition, there exists $z\in \Gamma_0$ such that $\phi_t(z)$ is a heteroclinic orbit, or, equivalently,
$z\in W^u((\sigma_0)^{-1}(x))\cap W^s(x)$.
Therefore, the formula \eqref{eq:S-def} can also be written as:
 \begin{align}\label{eq:S-def2}
S\left( x\right) & =\lim_{T\rightarrow+\infty}\int
_{-T}^{0}\left [\frac{dH_\eps}{d\eps}_{\mid\eps=0}\circ \phi_{t}( z) -\frac{dH_\eps}{d\eps}_{\mid\eps=0}\circ \phi_{t} (\sigma_0 ^{-1}(x))\right]dt  \\
& +\lim_{T\rightarrow+\infty}\int_{0}^{T}\left [\frac{dH_\eps}{d\eps}_{\mid\eps=0}\circ \phi_{t}(z)-
\frac{dH_\eps}{d\eps}_{\mid\eps=0}\circ \phi_{t}(x)\right]dt .  \notag
\end{align}
The normal hyperbolicity of $\Lambda_0$ ensures that $\phi_{t}( z) - \phi_{t} (\sigma_0 ^{-1}(x))$
and $\phi_{t}( z) - \phi_{t} (x)$ converge to zero exponentially fast as $t\to \mp\infty$ respectively.
This makes the integral in \eqref{eq:S-def2} absolutely convergent with its derivatives.

In some cases  it is possible that, when $\eps=0$, the stable and unstable manifolds of $\Lambda_0$ coincide, i.e., $W^u(\Lambda_0)=W^s(\Lambda_0)$.
In these  cases, one usually uses  first order perturbation theory to establish the splitting of the manifolds.
Using an adapted Melnikov method, in \cite{DelshamsLS2006,GideaL16} it is shown that, under appropriate conditions,
for $0 < |\eps| \ll 1$, one can find a transverse intersection of $W^u(\Lambda_\eps)$ with $W^s(\Lambda_\eps)$
along a manifold $\Gamma_\eps$, which extends smoothly to a homoclinic manifold $\Gamma_0$ as $\eps\to 0$.
While the limiting manifold $\Gamma_0$ is not a transverse intersection, the scattering map $\sigma_\eps$
depends smoothly on $\eps$, and thus extends smoothly to a well defined map  $\sigma_0$ on $\Gamma_0$.

The special case when $\sigma_0=\mathrm{Id}$, which occurs in many examples,  will be considered in Section~\ref{subsection:aprioriunstable},
where a more explicit formula for the function $S(x)$ is given in terms of the so-called Melnikov potential.

\subsection{Recurrence}\label{recurrence}
We briefly recall here the definition of recurrent points and the Poincar\'e Recurrence Theorem, which will be needed later.

\begin{defn}
A point $x\in\Lambda$ is said to be recurrent for a map $f$ on $\Lambda$, if for every open neighborhood
$U\subseteq \Lambda$ of $x$,  $f^{k}(x)\in U$ for some $k>0$ large enough.
\end{defn}

\begin{thm}[Poincar\'e Recurrence Theorem]
Suppose that $\mu$ is a measure on $\Lambda$ that is preserved by $f$, and $D\subset \Lambda$ is $f$-invariant with $\mu(D)<\infty$.
Then $\mu$-almost every point of $D$ is recurrent.
\end{thm}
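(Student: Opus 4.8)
The plan is to give the standard Halmos-style proof of the Poincar\'e recurrence theorem: first a ``local'' statement that $\mu$-almost every point of any fixed measurable subset of $D$ returns to that subset, and then a globalization using the second countability of $\Lambda$. I will use ``$f$ is $\mu$-preserving'' in the form $\mu(f^{-1}(E))=\mu(E)$ for every measurable $E$, and ``$D$ is $f$-invariant'' in the form $f^{-1}(D)\subseteq D$; if only $f(D)\subseteq D$ is available, one first passes to $\bigcap_{k\ge0}f^{-k}(D)$, which has full $\mu$-measure in $D$ by measure preservation. Measurability of the maps and sets involved is automatic since $f$ is continuous and we work with Borel sets.

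\emph{Step 1: almost every point of a measurable set returns.} Fix a measurable $U\subseteq D$ and let $U_\infty=\{x\in U:f^k(x)\notin U\text{ for all }k\ge1\}$ be the set of points of $U$ that never come back. The preimages $U_\infty,f^{-1}(U_\infty),f^{-2}(U_\infty),\dots$ are pairwise disjoint: if $z$ lay in both $f^{-m}(U_\infty)$ and $f^{-n}(U_\infty)$ with $0\le m<n$, then $y:=f^m(z)\in U_\infty\subseteq U$ while $f^{n-m}(y)=f^n(z)\in U_\infty\subseteq U$ with $n-m\ge1$, contradicting the definition of $U_\infty$. By invariance of $D$ all these preimages lie in $D$; by measure preservation each has the same measure $\mu(U_\infty)$; since they are disjoint subsets of the finite-measure set $D$, necessarily $\mu(U_\infty)=0$.

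\emph{Step 2: upgrading to arbitrarily large return times, and globalizing.} Let $A=U\setminus U_\infty$, so $\mu(U\setminus A)=0$. If $x\in U$ returns to $U$ only finitely often, let $M\ge0$ be its last return time; then $f^M(x)\in U$ and $f^M(x)$ never returns, i.e. $f^M(x)\in U\setminus A$, so $x\in f^{-M}(U\setminus A)$, and hence the set of such $x$ is contained in the $\mu$-null set $\bigcup_{M\ge0}f^{-M}(U\setminus A)$. Now fix a countable basis $\{V_j\}_{j\ge1}$ of the topology of $\Lambda$ (which exists since $\Lambda$ is a compact manifold), consisting of open, hence Borel-measurable, sets. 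Applying the previous remark to $U=V_j\cap D$ for each $j$, we obtain a $\mu$-null set $N_j\subseteq V_j\cap D$ outside of which every point of $V_j\cap D$ returns to $V_j\cap D$ with arbitrarily large return times; set $N=\bigcup_jN_j$, so $\mu(N)=0$. For $x\in D\setminus N$ and any open neighborhood $V$ of $x$ in $\Lambda$, choose $j$ with $x\in V_j\subseteq V$; since $x\in V_j\cap D$ and $x\notin N_j$, there are arbitrarily large $k$ with $f^k(x)\in V_j\cap D\subseteq V$. Thus every point of $D\setminus N$ is recurrent, which is the assertion.

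The argument is entirely elementary and uses neither the symplectic nor the hyperbolic structure. The only point requiring care is Step 1, where the pairwise disjointness of the iterated preimages must be combined with both $\mu(D)<\infty$ and the invariance of $D$ (to keep all the preimages inside a set of finite measure) — this is precisely where a finiteness hypothesis is unavoidable. The ``returns once $\Rightarrow$ returns infinitely often'' trick and the second-countability reduction in Step 2 are routine bookkeeping, so I do not anticipate any genuine obstacle.
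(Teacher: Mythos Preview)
Your proof is correct and is the standard argument. Note, however, that the paper does not supply its own proof of this statement: the Poincar\'e recurrence theorem is merely recalled as a classical result to be used later (in the proof of Theorem~\ref{lem:main1} and Corollary~\ref{cor:main}), so there is nothing to compare against. Your write-up is self-contained and sound; the only minor remark is that the paper's definition of recurrence requires just one return time $k>0$ per neighborhood, so your Step~2 upgrade to arbitrarily large return times, while correct, is slightly more than is strictly needed.
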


Instead of the Poincar\'e Recurrence Theorem, in the arguments below we can use the following weaker statement on recurrence.

\begin{prop}\label{prop:weakrecurrence}
Suppose that $\mu$ is a measure on $\Lambda$ that is preserved by $f$, and $D\subset \Lambda$ is $f$-invariant with $\mu(D)<\infty$. Then for every open set $U\subseteq D$, there exists $n\geq 1$ such that $f^n(U)\cap U\neq \emptyset$;
moreover, $n$ can be chosen arbitrarily large.
\end{prop}

\section{Main results}\label{section:shadowinglemmas}

The aim of this section is to provide a master theorem -- Theorem \ref{lem:main1} -- that guarantees the existence of diffusing orbits in a general framework.

\subsection{Shadowing of pseudo-orbits of the scattering map}

The main result of this section is:

\begin{thm}[Shadowing Lemma for  Orbits of   the Scattering Map]\label{lem:main1}
Assume that $f:M\to M$ is a sufficiently smooth map, $\Lambda\subseteq M$  is a normally hyperbolic invariant manifold
with stable and unstable manifolds which intersect transversally along an homoclinic channel $\Gamma\subseteq M$,
and $\sigma$ is the scattering map associated to~$\Gamma$.

Assume that  $f$ preserves a measure   absolutely continuous with respect to the Lebesgue measure on $\Lambda$,
and that $\sigma$ sends positive measure sets to positive measure sets.

Let $\{x_i\}_{i=0,\ldots,n}$ be a finite pseudo-orbit of the
scattering map in $\Lambda$, i.e., $x_{i+1}=\sigma(x_i)$, $i=0,\ldots,n-1$, $n\geq 1$,
that is contained in some open set $\mathcal{U}\subseteq \Lambda$  with almost every point  of $\mathcal{U}$ recurrent
for $f_{\mid\Lambda}$. (The points $\{x_i\}_{i=0,\ldots,n}$ do not have to be themselves recurrent.)

Then, for every $\delta>0$ there exists     an orbit $\{z_i\}_{i=0,\ldots,n}$ of $f$ in $M$,
with $z_{i+1}=f^{k_i}(z_i)$ for some $k_i>0$,  such that $d(z_i,x_i)<\delta$ for all $i=0,\ldots,n$.
\end{thm}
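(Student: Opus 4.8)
The plan is to reduce Theorem~\ref{lem:main1} to Theorem~\ref{lem:key1} by inserting long stretches of inner dynamics between consecutive applications of the scattering map, exploiting recurrence to make those stretches return close to where they started. First I would fix $\delta>0$ and let $N=N(\delta/2)$ be the threshold provided by Theorem~\ref{lem:key1}. The goal is to produce a genuine $\{\sigma,f\}$-pseudo-orbit of the form $y_{i+1}=f^{m_i}\circ\sigma\circ f^{n_i}(y_i)$ with all $m_i,n_i\geq N$ whose points stay within $\delta/2$ of the prescribed pseudo-orbit $\{x_i\}$ of the scattering map; Theorem~\ref{lem:key1} then yields a true orbit $\{z_i\}$ of $f$ with $d(z_i,y_i)<\delta/2$, hence $d(z_i,x_i)<\delta$.

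To build that pseudo-orbit I would argue inductively. Since $\sigma$ is a diffeomorphism of open sets in $\Lambda$ (hence a homeomorphism), and it is absolutely continuous, it carries sets of positive Lebesgue measure to sets of positive measure, and likewise for preimages; the same holds for $f_{\mid\Lambda}$ because $f$ preserves a measure absolutely continuous with respect to Lebesgue. Choose a small radius $\rho>0$ (to be shrunk finitely often) such that the $\rho$-ball around each $x_i$ lies in $\mathcal{U}$ and such that $\sigma$ maps the $\rho$-ball about $x_i$ into the $(\delta/2)$-ball about $x_{i+1}=\sigma(x_i)$ for $i=0,\dots,n-1$ (possible by continuity of $\sigma$ at the finitely many points $x_i$), and similarly $f^{k}$-images used below stay controlled. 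Now set $y_0=x_0$. Having constructed $y_i$ with $d(y_i,x_i)<\rho$, I use recurrence: the set of recurrent points in the $\rho$-ball about $x_i$ has full measure, so pick a recurrent point $w$ in that ball; there is $n_i\geq N$ with $f^{n_i}(w)$ in the $\rho'$-ball about $w$, where $\rho'$ is chosen in advance so that $f^{n_i}$ cannot move points of the $\rho$-ball about $x_i$ too far — this needs a uniform modulus-of-continuity argument, which is the delicate point (see below). Then apply $\sigma$: the point $\sigma(f^{n_i}(w))$ lies near $\sigma(x_i)=x_{i+1}$. Finally pad with inner dynamics once more: use recurrence again at $x_{i+1}$ to return after $m_i\geq N$ steps close to a point near $x_{i+1}$, which we declare to be $y_{i+1}$. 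Taking $y_i$ to be these recurrent base points (rather than the $x_i$ themselves — exactly the subtlety flagged in the footnote) makes the composition $y_{i+1}=f^{m_i}\circ\sigma\circ f^{n_i}(y_i)$ literally of the form required, with all exponents $\geq N$.

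The main obstacle is the following: recurrence as stated gives, for each individual recurrent point and each prescribed neighborhood, \emph{some} large return time, but Theorem~\ref{lem:key1} demands return times that are $\geq N$ with $N$ fixed in advance, and it also demands that after $n_i$ iterates the orbit is still within $\delta/2$ of $x_{i+1}$ after one more application of $\sigma$. The first demand is actually harmless — recurrence lets us choose the return time as large as we like, in particular $\geq N$ — but the second requires that a single $f$-iterate (of unknown, possibly huge, order $n_i$) applied to a point very near $x_i$ land very near a point very near $x_i$; that is exactly what "the point is recurrent" supplies, provided we commit to the return \emph{neighborhood} before choosing the time, and provided we only ever work with the finitely many marked points $x_0,\dots,x_n$ so that finitely many continuity estimates suffice. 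Concretely, the order of quantifiers must be: fix $\delta$; get $N$; shrink $\rho$ using continuity of $\sigma$ at $x_0,\dots,x_{n-1}$; then, at each step $i$, first pick a recurrent point $w$ in $B(x_i,\rho)$, then pick its return time $n_i\geq N$ into $B(w,\rho)$ so that $d(f^{n_i}(w),x_i)<2\rho$. I would write this out carefully, noting that absolute continuity of $\sigma$ and of the invariant measure is used only to guarantee that "almost every point recurrent" transfers through $\sigma$ and through iterates of $f_{\mid\Lambda}$ when we need a recurrent point inside a positive-measure target set, so that the induction never gets stuck. With the pseudo-orbit $\{y_i\}_{i=0,\dots,n}$ in hand, one invocation of Theorem~\ref{lem:key1} (applied to the finite sequence, which is fine since the statement is for $i\geq 0$ but the proof only uses finitely many windows for a finite orbit) finishes the argument.
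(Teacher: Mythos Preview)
Your overall strategy --- reduce to Theorem~\ref{lem:key1} by interspersing long recurrent segments of the inner dynamics between applications of $\sigma$ --- is exactly right, and you correctly identify that absolute continuity of $\sigma$ and of the invariant measure is what makes the induction survive. But the inductive construction you describe does not actually close up, and the gap is precisely at the place you flag as ``delicate''.

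The issue is this: once you have committed to a point $y_i$, the next point $y_{i+1}=f^{m_i}\circ\sigma\circ f^{n_i}(y_i)$ is \emph{determined}; you cannot ``declare'' it to be a conveniently chosen recurrent point near $x_{i+1}$. In your sketch you pick a recurrent point $w$ in $B(x_i,\rho)$, use its return time $n_i$, and then say ``use recurrence again at $x_{i+1}$'' to choose $m_i$. But recurrence at $x_{i+1}$ applies to the \emph{specific} point $\sigma(f^{n_i}(w))$, and nothing in your construction guarantees that this particular point lies in the full-measure recurrent set. Redefining $y_i$ to be $w$ does not help: at the next step, $y_{i+1}=f^{m_i}\circ\sigma\circ f^{n_i}(w)$ is again a single determined point, not one you may pick freely. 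So the forward induction on individual points stalls after one step. (Your remark about a ``uniform modulus-of-continuity argument for $f^{n_i}$'' is a symptom of the same confusion: no such estimate is available or needed, since $n_i$ may be enormous; what you actually need is that the orbit point itself lands back in the small ball, not that nearby points stay nearby.)

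The repair --- and this is what the paper does --- is to run the induction on full-measure \emph{sets} rather than on points. Fix small balls $B_i\ni x_i$ with $B_{i+1}=\sigma(B_i)$ and $\mathrm{diam}\,B_i<\delta/2$, and work with $g=f^{2N}$. In each $B_i$ the recurrent set $R_i$ has full measure; the paper shows (a short but non-automatic argument, using that $g$ is measure-preserving and that the first-return sets $C_t$ are disjoint) that the set $R'_i$ of first returns also has full measure in $B_i$. Absolute continuity of $\sigma$ then gives that $\sigma(R'_i)$ has full measure in $B_{i+1}$, so $\sigma(R'_i)\cap R_{i+1}$ is again full measure. Iterating, one obtains a decreasing tower $\Sigma_0\supseteq\Sigma_1\supseteq\cdots\supseteq\Sigma_n$ of full-measure subsets of $B_0$ such that every $y_0\in\Sigma_n$ admits times $t_0,\dots,t_n\ge 1$ with $g^{t_j}\circ\sigma\circ\cdots\circ\sigma\circ g^{t_0}(y_0)\in B_j$ for every $j$. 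Only at the end does one choose a single point $y_0\in\Sigma_n$; the exponents are then $\ge 2N$ in $f$, so each $t_i$ splits as $m_{i-1}+n_i$ with $m_{i-1},n_i\ge N$, and Theorem~\ref{lem:key1} applies directly. Your paragraph about absolute continuity ``so that the induction never gets stuck'' is pointing at exactly this argument; what is missing from your write-up is the commitment to propagate \emph{sets} (and the verification that $R'_i$ has full measure), rather than trying to thread a single point through step by step.
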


The proof of this result, given in Subsection \ref{prooflem:main1}, uses the given pseudo-orbit of the scattering map, and
the recurrence property of the inner dynamics, to produce another pseudo-orbit that intertwines the scattering map and the inner dynamics.
Then a shadowing lemma type of result -- Lemma \ref{lem:key1-alternative} below -- yields a true orbit of the system.

To apply Theorem \ref{lem:main1}, one needs to find orbits of the scattering map that follow desired itineraries.
For example, one may wish to  find  a pseudo-orbit  of the scattering map that travels  a `long distance' in $\Lambda$.
If such a pseudo-orbit is found,  Theorem \ref{lem:main1} yields a true orbit   that also travels the same large distance.

We emphasize that Theorem \ref{lem:main1} is very general, as the requirements on the scattering map and on the inner dynamics are
automatically satisfied in many situations.
If $M$ is endowed with a symplectic form $\omega$, $\omega_{\mid\Lambda}$ is symplectic,  and $f$ is also symplectic,
then $f_{\mid \Lambda}$ is symplectic and the scattering map $\sigma$ is also symplectic (see \cite{DelshamsLS08a}).
Thus, $f$ and $\sigma$ are volume preserving, and Theorem \ref{lem:main1} applies.

We  have the following remarkable dichotomy.
Then either:
\begin{itemize}
\item [I.]
The inner map $f_{\mid \Lambda}$ has an invariant open set $\mathcal{U}$ containing   the domain of
the scattering map, and on which there is Poincar\'e Recurrence.
Under generic conditions,   the scattering map has a pseudo-orbit  that travels a long distance within  $\mathcal{U}$.
Applying Theorem \ref{lem:main1} yields the existence of a true orbit which travels a long distance as well.
Therefore we obtain diffusion by intertwining the inner and outer dynamics.

\item[II.]
There is no  open set of finite measure   in $\Lambda$ that is invariant under $f_{\mid \Lambda}$.
Hence there are orbits  of $f$ that leave every open set  in $\Lambda$, thus traveling long distances.
Therefore we obtain diffusion by  the inner map $ f_{\mid \Lambda}$  alone.
\end{itemize}

In both alternatives we obtain  diffusing orbits.

A precise formulation of this dichotomy is given in Corollary \ref{cor:main}.

Note that in Theorem~\ref{lem:main1} we do not require that $f$ satisfies a twist condition,
which seem  to be essential  in many other works.
In general,  non-twist maps of the annulus have regions where   standard methods such as  KAM theory and Aubry-Mather theory do not apply
(see \cite{MorrisonCNG1997,DelshamsL2000}).

Indeed, in Theorem~\ref{lem:main1}, we do not need to make
any qualitative assumption for the map $f$. In particular, we do
not care of whether the map has KAM tori that are close enough.
That is, the mechanism presented here  does not present the
\emph{large gap problem}.

Theorem \ref{lem:main1} extends naturally to the case of finitely many scattering maps rather than a single one.
Suppose that there exists a finite collection of homoclinic channels    $\Gamma_j\subseteq M$, for $j\in\{1,\ldots,L\}$,
for some positive integer $L$.  Let $\sigma_j:\Omega^{-}(\Gamma_j)\to \Omega^{+}(\Gamma_j)$ be the  scattering map associated to
$\Gamma_j$, for $j=1,\ldots,L$.

Using many scattering maps in arbitrary order
rather than just one is very advantageous to
prove diffusion. Iterating a single map has obstructions for
large scale motions (e.g., KAM tori). Having several maps, it is very
hard to find objects that are invariant for all of them.
See \cite{DelshamsLS00,GideaL06,GideaR07,GideaL13, Bolotin06,BolotinM06,Marco2008}.

\begin{thm}\label{lem:mainw}
Assume that $f:M\to M$, $\Lambda\subseteq M$, $\Gamma_j\subseteq M$ and $\sigma_j$, $j=1,\ldots,L$, are as above.
Assume that $f$ preserves  a measure
absolutely continuous with respect to the Lebesgue measure on $\Lambda$ and that each $\sigma_j$
send positive measure sets to positive measure sets.
Let $\{x_i\}_{i=0,\ldots,n}$ be a finite sequence of points of the form $x_{i+1}=\sigma_{\alpha_i}(x_i)$ in $\Lambda$,
where $\alpha_i\in \{1,\ldots,L\}$ for  $i=0,\ldots,n-1$, which is contained in some open set  $\mathcal{U}\subseteq \Lambda$
with the property that almost every point of $\mathcal{U}$ is recurrent  for $f_{\mid\Lambda}$.
Then, for every $\delta>0$ there exists     an orbit $\{z_i\}_{i=0,\ldots,n}$ of $f$ in $M$, with $z_{i+1}=f^{k_i}(z_i)$ for some $k_i>0$,  such that
$d(z_i,x_i)<\delta$ for all $i=0,\ldots,n$.
\end{thm}

\begin{rem}
In general situations, one has an abundance of homoclinic orbits.
By the Smale-Birkhoff Homoclinic Orbit Theorem  the existence of a single transverse homoclinic orbit implies the
existence of infinitely many transverse homoclinic orbits that are geometrically distinct.
Thus one is able to define many scattering maps.

In applications, using several scattering maps rather than a single one can be very advantageous.
In astrodynamics,  for example, the existence of multiple homoclinic intersections
can be exploited to obtain diffusion \cite{DelshamsKRS14, FejozGKR11}
and to increase the versatility of space missions.
See, e.g., \cite{CapinskiGL14,DelshamsGR2016}.
\end{rem}

\begin{rem}
Using several scattering maps   can also be useful to prove diffusion in generic systems.
In some perturbative problems, e.g., as in Section \ref{subsection:aprioriunstable}, the scattering map can be computed in terms of
convergent integrals of the perturbation evaluated along a homoclinic of the unperturbed system.
One can ensure that the scattering map has non-trivial effects by verifying that such an integral is non-zero.
Thus, given a perturbation, one can slightly modify it, using a bump function supported  in some tubular neighborhood of the homoclinic,
to obtain a nearby perturbation for which the corresponding scattering map exhibits  the desired non-trivial effects.
Having available multiple homoclinics, one can use bump functions supported in disjoint tubular neighborhoods of each of these  homoclinics to obtain multiple
scattering maps that exhibit different types of non-trivial behaviors. See, e.g., \cite{ChengY2004,ChengY2009,GideaL13,GideaL16}.
\end{rem}

\begin{rem}
The results above also generalize to the case of several NHIM's. If
$$
\Gamma_{1,2}\subseteq W^u_{\Lambda_1}\cap W^s_{\Lambda_2}
$$
is a heteroclinic channel  between two NHIM's
$\Lambda_1,\Lambda_2$, we can define a scattering map
$$
\sigma_{1,2}:\Omega^{-} (\Gamma_{1,2})\subseteq \Lambda_1 \to \Lambda_2
$$
in a similar fashion to the case of a single NHIM.
If we are given a chain of manifolds $\Lambda_i$, $i=1,\ldots,n$, and scattering maps
$$
\sigma_{i,i+1}:\Omega^{-}(\Gamma_{i,i+1})\subseteq \Lambda_i \to \Lambda_{i+1}, \ i=1,\ldots,n-1,
$$
then we can shadow orbits of the form
$y_{i+1}= \sigma_{i,i+1} (y_i)$, with
$y_i\in\Lambda_i$ and $y_{i+1}\in\Lambda_{i+1}$, for $i=1,\ldots,n-1$.
Such scattering maps appear in the study of double resonances \cite{Mather12,KaloshinBZ11,KaloshinZ12a,KaloshinZ12b}.
We hope to come back to this problem.

Another problem where one has scattering maps between two different normally hyperbolic invariant manifolds is the problem of two rocking blocks under periodic
forcing \cite{GranadosHS14}.
\end{rem}

\subsection{A qualitative mechanism of diffusion in nearly integrable Hamiltonian systems}\label{subsection:qualitative-diffusion}

We now describe several situations when we can construct  pseudo-orbits of the scattering map that travel a significant distance within the normally
hyperbolic invariant manifold,
and so Theorem \ref{lem:main1} can be applied to obtain true orbits nearby. More concrete conditions that yield such orbits
in some concrete examples appear in Section~\ref{subsection:aprioriunstable}.

We consider the perturbative setting described in Section \ref{subsection:perturbative},
where $f_\eps:M\to M$  is a  symplectic map,
$\Lambda_\eps\subseteq M$  is a normally hyperbolic invariant manifold (not necessarily compact) for $f_\eps$,   $\Gamma_\eps$ is a homoclinic channel
for $f_\eps$, and $\sigma_\eps~:~\Omega^{-}(\Gamma_\eps)\to \Omega^{+}(\Gamma_\eps)$ is the  corresponding scattering map, for $\eps\in(\eps_0,\eps_0)$.
We assume that  $\Lambda_\eps$ is described via a  parametrization
$k_\eps:\Lambda_0\to\Lambda_\eps$,
and let
$(\tilde f_\eps)_{\mid\Lambda_0}=k_\eps^{-1}\circ (f_\eps)_{\mid\Lambda_\eps}\circ k_\eps$, $\tilde \sigma_\eps=k_\eps^{-1}\circ\sigma_\eps\circ k_\eps$.
We also assume that
$\Lambda_0=B^d\times\mathbb{T}^d$,
and that  we have a system of  action-angle coordinates
$(I,\phi)$ on $\Lambda_0$ with $I\in B^d$ and $\phi\in\mathbb{T}^d$,
where
$B^d\subseteq \mathbb{R}^d$ is a disk in $\mathbb{R}^d$ or $B^d=\mathbb{R}^d$.
Here $\mathbb{T}^d=\mathbb{R}^d/\mathbb{Z}^d$.

Below, in Theorem \ref{thm:main}, we will use the perturbative formula for the scattering map \eqref{eqn:scattering_perturb} with  $\tilde \sigma_0=\textrm{Id}$,
and with a slightly more general first order perturbation term of the scattering map.
This allows to apply the result of Theorem \ref{thm:main} to more degenerate cases,
where second order perturbation theory is necessary to detect the transversality between the stable and unstable manifolds, or to
the so-called `a priori stable' case, where the Melnikov potential can be  exponentially small in $\varepsilon$.
See Remark \ref{rem:muepsilon}.

\begin{thm}\label{thm:main}
Assume that for all $\eps\in(-\eps_0,\eps_0)$, there exists a  scattering map $\sigma_\eps$, defined in a domain
$U:=k_\eps^{-1}(\Omega^{-}(\Gamma_\eps))\subset \Lambda_0$, such that

\begin{equation}\label{eqn:seps}
\tilde\sigma_\eps=\mathrm{Id} +\mu(\eps) J\nabla S +g(\mu(\eps)), \ g(\mu(\eps))=o(\mu(\eps)),
\end{equation}
where
$S$ is some real valued $C^\ell$-function on $U\subset \Lambda_0$, and $g(\mu(\eps))$,
$\mu(\eps)$ are some
$C^\ell$-functions, being  defined on $(-\eps_0,\eps_0)$ with $\mu(0)=0$;
by $g(\mu(\eps))=o(\mu(\eps))$ we mean that $\lim_{\eps\to 0} {g(\mu(\eps))}/{\mu(\eps)} =0.$

Suppose that $J\nabla S(\tilde x_0)\neq 0$ at some point $\tilde x_0\in U\subset \Lambda_0$.

Let $\tilde{\gamma}:[0,1]\to \Lambda_0$ be  an integral curve through $\tilde x_0$ for the vector field $J\nabla S $.
Suppose  that there exists a  neighborhood
$\mathcal{U}_{\tilde{\gamma}}\subset U$ of $\tilde{\gamma} ([0,1])$ in $\Lambda_0$
such that a.e. point in  $\mathcal{U}_{\tilde{\gamma}}$ is recurrent for $\tilde f_{\eps \mid\Lambda_0}$.
Let $\gamma_\eps=k_\eps\circ\tilde{\gamma}$ be the corresponding curve in $\Lambda_\eps$.

There exists $\eps_1>0$ sufficiently small, and a constant $K>0$,
such that  for every $\eps\in(-\eps_1,\eps_1)\setminus\{0\}$ and every  $\delta >0$,
there exists an orbit $\{z_i\}_{i=0,\ldots,n}$ of $f_\eps$ in $M$, with $n=O(\mu(\eps)^{-1})$, such that for all $i=0,\ldots,n-1$,
\[
z_{i+1}=f^{k_i}_{\eps}(z_i),\quad \textrm{ for some }k_i>0,\]  and for all $i=0,\ldots,n$, we have
\[
d(z_i,\gamma_\eps(t_i))<\delta +K(\mu (\eps)+ |g(\mu(\eps))/\mu(\eps)|), \textrm{ for  }t_i=i\cdot \mu(\eps),
\]
where
$0= t_0<t_1<\ldots <t_n\le 1$.
\end{thm}

The proof of this theorem is given in Subsection \ref{proofthm:main}.

We will refer to a solution curve $\tilde{\gamma}$ in $\Lambda_0$ as in the statement of Theorem \ref{thm:main},
or to its corresponding curve  $\gamma_\eps=k_\eps(\tilde{\gamma})$ in $\Lambda_\eps$, as a `scattering path',
as it represents an approximation of an orbit of the scattering map. See Fig. \ref{fig:scattering_path}.
So the previous result can be stated that, given any scattering path, there exits a true orbit of the system that shadows it.
Since one can typically find a scattering path for which the action variable changes by some positive distance independent of $\eps$,
implicitly one can  find a true orbit for which the action variable changes by $O(1)$; this is stated precisely in the following corollary.

There exists a sufficiently small neighborhood $V_{\Lambda_\eps}$ of $\Lambda_\eps$ in $M$ such that for every point $z\in V_{\Lambda_\eps}$
there exists a unique point $z'\in\Lambda_\eps$ which is the closest point to $z$.
The point $z'$ is the image of some unique point $\tilde z\in\Lambda_0$ via $k_\eps$, i.e., $z'=k_\eps(\tilde z)$.
 We denote by $I(z)$ the $I$-coordinate of the corresponding point  $\tilde z\in\Lambda_0$, i.e., $I(z):=I(\tilde z)$.

\begin{cor}\label{cor:main}
Assume that a scattering map $\sigma_\eps$   as in Theorem \ref{thm:main} is given.
If $J\nabla S$ is transverse to some level set $\{I=I_*\}$ in $\Lambda_0$ at some point $(I_*,\phi_*)\subset U$, then there exist
$0<\eps_1<\eps_0$ and $\rho>0$,
such that for every $0<\eps <\eps_1$ there exists an orbit $\{z_i\}_{i=0,\ldots,n}$ of $f_\eps$, such that \[\|I(z_n)-I( z_0)\|>\rho.\]
\end{cor}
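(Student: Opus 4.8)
The plan is to deduce Corollary \ref{cor:main} from Theorem \ref{thm:main} by exhibiting a scattering path $\tilde\gamma$ along which the action variable $I$ changes by a definite amount, and then invoking the shadowing conclusion of Theorem \ref{thm:main} to transfer this change to a true orbit. First I would use the transversality hypothesis: since $J\nabla S(I_*,\phi_*)$ is transverse to the level set $\{I=I_*\}$ at the point $\tilde x_*=(I_*,\phi_*)$, this vector has a nonzero component in the $I$-direction, i.e. $\frac{d}{dt}\big|_{t=0} I(\tilde\gamma(t))\neq 0$ for the solution curve $\tilde\gamma$ of $J\nabla S$ through $\tilde x_*$. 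In particular $J\nabla S(\tilde x_*)\neq 0$, so the hypothesis ``$J\nabla S(\tilde x_0)\neq 0$ at some point'' of Theorem \ref{thm:main} is satisfied with $\tilde x_0=\tilde x_*$. By continuity of $J\nabla S$ and of the flow it generates, I can choose a reparametrized piece of the orbit curve $\tilde\gamma:[0,1]\to\Lambda$ through $\tilde x_*$ along which $t\mapsto I(\tilde\gamma(t))$ is strictly monotone in, say, the first coordinate; hence $\|I(\tilde\gamma(1))-I(\tilde\gamma(0))\|=:2\rho_0>0$, a constant independent of $\eps$.

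Next I would verify the recurrence hypothesis of Theorem \ref{thm:main}. Since $f_\eps$ is symplectic and $\Lambda_\eps$ is a symplectic submanifold, $\tilde f_{\eps\mid\Lambda}$ preserves a smooth (Liouville) measure on $\Lambda=B^d\times\mathbb{T}^d$. If $\Lambda$ is compact, or if $\tilde\gamma([0,1])$ together with a neighborhood $\mathcal{U}_{\tilde\gamma}$ lies in a bounded $f_\eps$-invariant region of finite measure (which one arranges, for instance, by working in an energy level or by the compactness assumptions already in force on $M$), then the Poincar\'e recurrence theorem applies and a.e. point of $\mathcal{U}_{\tilde\gamma}$ is recurrent for $\tilde f_{\eps\mid\Lambda}$. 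This is precisely branch I of the dichotomy discussed after Theorem \ref{lem:main1}; should no such finite-measure invariant set exist (branch II), the inner dynamics alone produces orbits with unbounded excursion in $I$ and the conclusion follows a fortiori, so I would briefly note this case separately.

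Now I would apply Theorem \ref{thm:main} with this $\tilde\gamma$: it produces $\eps_1>0$ and $K>0$ such that for every $\eps\in(0,\eps_1)$ and every $\delta>0$ there is an orbit $\{z_i\}_{i=0,\ldots,n}$ of $f_\eps$, with $n=\mathrm{E}(1/\mu(\eps))$, satisfying $d(z_i,\gamma_\eps(t_i))<\delta+K\mu(\eps)$ where $t_i=i\mu(\eps)$ and $t_n\le 1$. Here the endpoint time $t_n$ satisfies $1-\mu(\eps)<t_n\le 1$, so $t_n\to 1$ as $\eps\to 0$; by uniform continuity of $I\circ\tilde\gamma$ and of $k_\eps$ in $\eps$, $\|I(\gamma_\eps(t_n))-I(\gamma_\eps(t_0))\|\ge 2\rho_0 - o(1)$ as $\eps\to 0$. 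Combining with the shadowing estimate and the fact that $I(z)$ is a Lipschitz function of $z\in V_{\Lambda_\eps}$ in the coordinates $h_\eps$ (so $|I(z_i)-I(\gamma_\eps(t_i))|\le C(\delta+K\mu(\eps))$), I get
\[
\|I(z_n)-I(z_0)\|\ \ge\ 2\rho_0\ -\ o(1)\ -\ 2C(\delta+K\mu(\eps)).
\]
Choosing $\delta$ small (say $\delta=\rho_0/(8C)$) and then shrinking $\eps_1$ so that $o(1)+2CK\mu(\eps)<\rho_0/2$ for $\eps\in(0,\eps_1)$, the right-hand side exceeds $\rho:=\rho_0$, giving the claim.

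The main obstacle I anticipate is not the shadowing step — that is handed to us by Theorem \ref{thm:main} — but controlling two quantitative points simultaneously: ensuring the $I$-displacement along $\tilde\gamma$ is bounded below by a constant \emph{independent of $\eps$}, and ensuring that the endpoint parameter $t_n$ (which only reaches $1-\mu(\eps)$) still captures essentially the full displacement. Both are handled by uniform continuity arguments and by the freedom to enlarge $\tilde\gamma$ slightly beyond the minimal arc, but one must be careful that the neighborhood $\mathcal{U}_{\tilde\gamma}$ on which recurrence is invoked is chosen first, and the parametrization of $\tilde\gamma$ chosen afterward within it, so that no constant secretly depends on $\eps$. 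A secondary subtlety is the justification of Poincar\'e recurrence on a genuinely $f_\eps$-invariant set of finite measure containing $\mathcal{U}_{\tilde\gamma}$; invoking the dichotomy after Theorem \ref{lem:main1} disposes of the contrary case cleanly.
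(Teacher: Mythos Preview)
Your proposal is correct and follows essentially the same route as the paper: use the transversality of $J\nabla S$ to a level set of $I$ to produce a scattering path with a definite, $\eps$-independent change in $I$, invoke the dichotomy (finite-measure forward-invariant hull of a neighborhood $\Rightarrow$ Poincar\'e recurrence; otherwise the inner dynamics already diffuses) to guarantee the recurrence hypothesis of Theorem~\ref{thm:main}, and then absorb the $\delta+K\mu(\eps)$ shadowing error by shrinking $\delta$ and $\eps_1$. The only notable difference is that the paper makes the dichotomy concrete by forming $\Delta^\infty=\bigcup_{n\ge 0}\tilde f_\eps^n(\Delta)$ for a product neighborhood $\Delta=D^d\times E^d$ on which transversality persists, whereas you invoke the dichotomy more abstractly; your slightly more careful handling of the endpoint $t_n<1$ via an $o(1)$ correction is in fact cleaner than the paper's version.
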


The proof of Corollary~\ref{cor:main} is given in Subsection \ref{proofcor:main}.

\begin{rem}
Let us note that the diffusion orbit $\{z_i\}_{i=0,\ldots,n}$ obtained in Corollary \ref{cor:main} does not necessarily follow
a given pseudo-orbit of the scattering map.
If the dynamics given by $f_\eps$ has diffusing orbits, these are the ones obtained in the corollary.
In case the dynamics of $f_\eps$ remains in a bounded set,
we need to follow  the pseudo-orbits of the scattering map $\sigma_\eps$ to obtain the diffusing ones.
\end{rem}

\begin{rem}
We note that, in order to obtain a trajectory that achieves a change in the $I$-variable of order $O(1)$, the scattering map needs to be applied
$n=O( \mu(\eps)^{-1})$ times.
However, the true orbit that achieves the $O(1)$-change in the $I$-variable follows
not only the scattering map but also some recursive orbit segments  of the  inner dynamics, as in the proof of  Theorem  \ref{lem:main1}.
Since these recursive orbit segments of the inner dynamics are obtained by invoking the Poincar\'e recurrence theorem,
the above result does not yield an estimate for the time required to  follow the inner dynamics, hence does not directly lead to an estimate on the diffusion time.
\end{rem}

\begin{rem}\label{rem:muepsilon}
The condition that the unperturbed scattering map is the identity, i.e.,
$\tilde \sigma_0=\textrm{Id}$, is naturally satisfied in some examples,   e.g., in the a priori unstable system in Section \ref{subsection:aprioriunstable}.
The function $\mu(\eps)$ is associated to the size of the splitting of $W^u(\Lambda_\eps)$, $W^s(\Lambda_\eps)$.

In the example   in Section \ref{subsection:aprioriunstable}, we have $\mu(\eps)=\eps$  and $g(\mu(\eps))= O(\eps ^2)$ in the generic case.
Nevertheless, in some degenerate cases, it can happen that, up to first order in $\varepsilon$,
the perturbed stable and unstable invariant manifolds of $\Lambda_\varepsilon$ coincide.
In these cases it is necessary to go to second order perturbation theory  to distinguish them
and therefore $\mu(\eps)=\eps ^2$ and $S$ in \eqref{eq:S-def2} has a different expression (not given here) in terms of the second order variationals along the unperturbed homoclinic orbit.

Another special situation occurs in the so-called `a priori stable' systems, where the unperturbed system is completely integrable without any hyperbolic structure.
In those cases,  the a priori unstable structure appears after some first order partial averaging near simple resonances, giving rise to a system of the form
$\tilde H_\eps ^0+\tilde H_\eps^1$.
Therefore the  analogue of the unpertubed homoclinic orbit $\phi_t(z)$,  which  appears in the formulas of  the scattering map
\eqref{eq:S-def2}, is $\eps$-dependent, i.e., $\phi_{t,\eps}(z)$. The splitting between the stable and unstable manifolds  behaves differently from the a priori unstable case with respect to the
perturbation parameter.
Concretely, we  have:
$$
\tilde\sigma_\eps=\mathrm{Id} + J\nabla S +g(\mu(\eps)),
$$
where:
\begin{align}
S\left( x,\eps\right) & =\lim_{T\rightarrow+\infty}\int
_{-T}^{0}\left [\tilde H^1_\eps\circ \phi_{t,\eps}( z) -\tilde H^1_\eps\circ \phi_{t,\eps} (x)\right]dt \notag \\
& +\lim_{T\rightarrow+\infty}\int_{0}^{T}\left [\tilde H^1_\eps\circ \phi_{t,\eps}(z)-
\tilde H^1_\eps\circ \phi_{t,\eps}(x)\right]dt .  \notag
\end{align}
and $S$ and $g$ satisfy:
$$
S\left( x,\eps\right)=O(\mu(\eps)), \quad
g(\mu(\eps))=o(\mu(\eps)).
$$
If the system is analytic   there is an  exponentially small splitting of the separatrices and therefore
$\mu(\eps)=O(\eps^p\exp(-q\eps^{-r}))$, for some $p,q,r \in \mathbb{Q}$,
as in  \cite{BaldomaFGS12}.
Nevertheless, to obtain the behaviour of the error function $g(\mu(\eps))$ in general analytic a priori stable systems
is still an open and difficult question.
If the system is only smooth one usually has $\mu(\eps)=\eps^p$, for $p\geq 2$.

Besides the above comments, we want to stress that, once a formula like \eqref{eqn:seps} is stablished, the results of Theorem \ref{thm:main} remain true.
\end{rem}

\subsection{Shadowing of pseudo-orbits obtained by interspersing the inner dynamics with scattering maps}\label{subsection:keylemmas1}

In this section we provide a rather general shadowing lemma-type of result  that is needed for the proof of  Theorem \ref{lem:main1}.

Let $\Lambda$ be a NHIM as in  the Subsection \ref{subsection:background}. There are two maps defined acting on~$\Lambda$:
the scattering map $\sigma$ -- the outer dynamics --, which is typically defined on some sub-domain of $\Lambda$,
called $\Omega ^-(\Gamma)$ in Section \ref{subsection:perturbative},
and the restriction of $f$ to $\Lambda$  -- the inner dynamics.
In principle, one can act on $\Lambda$ by applying either map in any succession, however this does not yield true orbits of the system but only pseudo-orbits.

The shadowing lemma below  says that for every pseudo-orbit  obtained by alternately applying a  single scattering map
and some power of the inner map, there exists a true
orbit of the system that shadows that pseudo-orbit.
The pseudo-orbits that we consider are of the form $y_{i+1}=f^{m_i}\circ \sigma \circ f^{n_i}(y_i)$.
The resulting shadowing orbits  are of the form $z_{i+1}=f^{m_i+n_i}(z_i)$, where $z_i$ is $\delta$-close to $y_i$ for all $i$.
We point out that we do not claim that  all points of the orbit $\{f^n(z_0)\}_{n\geq 0}$ are close to those of the pseudo-orbit,
but only some points corresponding to some intermediate times, and this is the sense in which we understand  shadowing orbits here.

The orders of the iterates $n_i$ and $m_i$ are required to satisfy certain conditions.
Each power $n_i$ is required to be larger than some threshold value $n^*$, which depends on  $\delta$,
and each power $m_i$ is required to be larger than some threshold value $m^*_i$,
which depends on the  history of the pseudo-orbit up to that point, that is, on all previous  powers $n_0,n_1,\ldots,n_{i-1},n_{i}$, $m_{0}, \ldots, m_{i-1}$
that were utilized in the previous segments of the pseudo-orbit from $y_0$ to $y_{i}$.
Intuitively, $m_i,n_i$ quantify
the lengths of time  for which we follow a  homoclinic trajectory associated to the scattering map,  forward, and respectively backwards,
in time, from  $\Gamma$ to a neighborhood of $\Lambda$.

\begin{lem}[Shadowing Lemma for Pseudo-Orbits of   the Scattering Map and  the Inner Dynamics] \label{lem:key1-alternative}

Assume that $f:M\to M$ is a $C^r$-map, $r\geq r_0$, $\Lambda\subseteq M$  is a normally hyperbolic invariant manifold,
$\Gamma\subseteq M$ is a  homoclinic channel, and $\sigma^\Gamma:\Omega^-(\Gamma)\to\Omega^+(\Gamma)$ is the scattering map associated to
$\Gamma$.
Assume that $\Lambda$ and $\Gamma$ are compact.

Then, for every $\delta>0$ there exists $n^*\in\mathbb{N}$ depending on $\delta$, and a family of functions
$m^*_i:\mathbb{N}^{2i+1}\to\mathbb{N}$, $i\geq 0$, depending on $\delta$, such that,
for every pseudo-orbit $\{y_i\}_{i\geq 0}$ in $\Lambda$ of the form
\begin{equation}\label{pseudorbit}
y_{i+1}=f^{m_i}\circ \sigma^{\Gamma}\circ f^{n_i}(y_i),
\end{equation}
for all $i\geq 0$, with $n_i\geq n^*$  and $m_i\geq m^*_i(n_0, \ldots, n_{i-1}, n_i, m_0, \ldots, m_{i-1})$,
there exists an orbit $\{z_i\}_{i\geq 0}$ of $f$ in $M$
such that, for all $i\geq 0$,
\[
z_{i+1}=f^{m_i+n_i}(z_i),\]
and \[d(z_{i},y_{i})<\delta.\]

\end{lem}

The proof of Lemma~\ref{lem:key1-alternative} is given in Subsection \ref{prooflem:key1-alternative}.

Notice that, of course, the functions $n^*$, $m^*_i$ are defined
only after we choose $\delta$, so, they depend on $\delta$.
We emphasize that the sequence $y_i$ in \eqref{pseudorbit} is
contained in $\Lambda$ so that the map $f$ that appears in the
definition of  $y_i$ can be taken to be $f|_\Lambda$.
The reason why we refer to the sequence $\{y_i\}$ in
\eqref{pseudorbit} as a `pseudo-orbit' is that
$y_{i+1}, y_i$ are close to the end points of
a segment orbit of the full map.

Indeed if we consider the point $p_i = (\Omega_-^\Gamma)^{-1} f^{n_i} (y_i)$
we see that
$f^{-n_i}(p_i)$ and $f^{-n_i}( f^{n_i}(y_i))=y_i$ would be close since they
are in the same unstable fiber and $n_i$ is large. We also see
that $\sigma^\Gamma f^{n_i}(y_i) = \Omega_+^\Gamma(p_i)$. Therefore,
$f^{m_i} (p_i)$ and $f^{m_i} \circ \sigma^\Gamma\circ f^{n_i}(y_i)$
will be close since they are in the same stable fiber.

Therefore, the sequence $\{y_i\}$ is approximated by a concatenation of
 segments of orbits $\mathcal{O}_i = \{ f^j(p_i)\}_{j = -n_i}^{m_i}$.
The mismatches at the ends of  these segments of orbits are clearly small.

It would be natural to try to use a hyperbolic
shadowing theorem to close this pseudo-orbit. Unfortunately, with the present
hypothesis, we do not have any information on the expanding or
contracting properties of the map along the directions tangent to
$\Lambda$, and standard hyperbolic shadowing theorems do not seem to apply.
We have to give a different proof
and introduce the condition that the $m_i$'s grow.

The above Lemma \ref{lem:key1-alternative} can be immediately extended to the case of countably many scattering maps.
Suppose that there exists an infinite collection of homoclinic channels    $\Gamma_j\subseteq M$, for $j\in\N$,
and let
$$
\sigma_j:\Omega^{-}(\Gamma_j)\to \Omega^{+}(\Gamma_j)
$$
be the  scattering map associated to $\Gamma_j$, for $j\in \N$.

\begin{lem}\label{lem:key2}
Assume that $f:M\to M$, $\Lambda\subseteq M$, $\Gamma_j\subseteq M$ and   $\sigma_j$, are as above,  for $j\in \mathbb{N}$.
Assume that $\Lambda$ and $\Gamma_j$ are compact.

Then, for every $\delta>0$ there exist  two families of functions,    $n^*_i:\mathbb{N}^{i}\to\mathbb{N}$ and
$m^*_i:\mathbb{N}^{2i+1}\times\mathbb{N}^{i+1}\to\mathbb{N}$, both  depending on $\delta$, for $i\geq 0$, such that,
for every pseudo-orbit $\{y_i\}_{i\geq 0}$ in $\Lambda$ of the form
\[
y_{i+1}=f^{m_i}\circ \sigma_{\alpha_i}\circ f^{n_i}(y_i),
\]
where $n_i\geq n^*(\alpha_0,\ldots,\alpha_{i-1})$, $m_i\geq m^*(n_0, \ldots,  n_i, m_0, \ldots, m_{i-1}, \alpha_0,\ldots,\alpha_{i})$ for all $i\geq 0$,
there exists an orbit $\{z_i\}_{i\geq 0}$ of $f$ in $M$
such that, for all $i\geq 0$,
\[
z_{i+1}=f^{m_i+n_i}(z_i),\] and  \[d(z_{i},y_{i})<\delta.
\]
\end{lem}

\begin{rem}\label{rem:angle}
Even if it is not explicitly written in Lemma \ref{lem:key1-alternative},  $n^*$ and $m_i^*$ also depend
on the hyperbolic structure, and in particular on the angle of intersection between
$W^u(\Lambda)$ with $W^s(\Lambda)$ along $\Gamma$.
\end{rem}

\begin{rem}
Note that Lemma \ref{lem:key1-alternative} does not use any symplectic structure.
It is valid for general maps.
Hence, the results obtained from it remain valid for dissipative perturbations of Hamiltonian systems.
Of course, when the perturbations are Hamiltonian we can obtain stronger results.
\end{rem}

\begin{rem}
We note that   results  related to Lemma \ref{lem:key1-alternative} appear in \cite{GideaR12,DelshamsGR13,GelfreichT2014}.
In comparison to our lemma, \cite{GideaR12,DelshamsGR13}
make some geometric assumptions on the inner dynamics, and \cite{GelfreichT2014}
considers only finite pseudo-orbits.
\end{rem}

\section{Existence of diffusing trajectories in nearly integrable a priori unstable Hamiltonian systems}
\label{subsection:aprioriunstable}
As an application, we show the existence of diffusing orbits in a large class of nearly integrable a priori unstable Hamiltonian systems
that are multi-dimensional both in the center and in the hyperbolic directions.
The model below is an extension of those  considered in \cite{DelshamsLS2006,DelshamsH2009,DelshamsLS16}.

Let
\begin{eqnarray}\label{eqn:hamiltonian}{\textrm{$ $}}\qquad
H_\eps(p, q, I,\phi,t)&=&h_0(I)+\sum_{i=1}^{n}\pm\left(\frac{1}{2}p^2_i+V_i(q_i)\right)+\eps H_1(p, q,I,\phi, t; \eps).
\end{eqnarray}
where $(p, q, I,\phi, t)\in  \mathbb{R}^n\times
\mathbb{T}^n\times\mathbb{R}^{d}\times\mathbb{T}^{d}\times \mathbb{T}^1$.

We make the following assumptions:

\smallskip

\emph{(A1.)
The functions $h_0$, $H_1$  and $V_i$, $i=1,\ldots,n$, are uniformly $C^r$ for  $r\geq r_0$.}

\smallskip

\emph{(A2.)
Each potential $V_i:\mathbb{T}^n\to \mathbb{R}$, $i=1,\ldots,n$, is $1$-periodic in $q_i$ and has a
non-degenerate global maximum at $0$, and hence each `pendulum' $\pm \left(\frac{1}{2}p^2_i+V_i(q_i)\right)$ has a homoclinic orbit to $(0,0)$,
parametrized  by $(p^0_i(t),q^0_i(t))$, $t\in\mathbb{R}$.}
\smallskip

To formulate the next assumption (A3), which has two parts (A3.a) and (A3.b),  we need to introduce some other tools.
\begin{itemize}
\item
Let $\tilde\Lambda_0=\{(p,q,I,\phi,t)\,|\, p=q=0 \}$.
By (A2) there is a family of homoclinic orbits for the whole system of penduli given by
\begin{equation*}
\begin{split}
(p^0(\tau+t\bar 1),q^0(\tau+t\bar 1))=&\left(p^0_1(\tau_1+t), \ldots, p^0_n(\tau_n+t),\right.\\&{}\qquad\left.q^0_1(\tau_1+t),
\ldots, q^0_n(\tau_n+t)\right),
\end{split}
\end{equation*}
where
$\tau=(\tau_1,\ldots,\tau_n)\in\mathbb{R}^n$, and $\bar 1=(1,\ldots,1)\in\mathbb{R}^n$.

\item Let
$\tilde\Gamma_0\subseteq \{(p^0(\tau),q^0(\tau),I,\phi,t)\,|\,\tau\in\mathbb{R}^n,\,  I\in\mathbb{R}^d,\,\phi\in\mathbb{T}^d,\, t\in\mathbb{T}^1\}$
be a homoclinic channel for which we can define a  scattering map  $\tilde\sigma_0$  on $\tilde\Lambda_0$.

\item Let the
Poincar\'e function (or Melnikov potential) associated to
the homoclinic manifold  $\tilde\Gamma_0$ be:
\begin{equation} \label{MelnikovpotG}
\begin{split}
L(\tau, I, \phi, s)  & =- \int_{-\infty}^{\infty}  \big[ H_1(p^0( \tau+t\bar 1),q^0( \tau+ t\bar 1),I, \phi + \omega(I)t,  s+t ;0 )  \\
&  \qquad\qquad -
H_1(0,0,I, \phi + \omega(I)t, s+t  ;0)   \big] \, dt . \\
\end{split}
\end{equation}
where $\omega(I)=\partial{h_0}/\partial{I}$.
\end{itemize}

The first part of the  assumption (A3) is:

\smallskip

\emph{(A3.a)
The perturbation $H_1$ is $1$-periodic in $t$ and satisfies some explicit non-degeneracy conditions as described  below.
Assume that there exists a set $U^-:=\mathcal{I}\times \mathcal {J} \subset \mathbb{R}^{d}\times \mathbb{T}^{d+1}$,
such that $\mathcal{I}$ is a ball in $\mathbb{R}^{d}$, and for any values  $(I,\phi,s)\in U^-$,
the map
\[\tau\in\mathbb{R}^n \to
L(\tau,I, \phi, s) \in\mathbb{R}\]
has a non-degenerate
critical point $\tau^*$, which is locally given,  by the implicit function theorem,
by
\[\tau^*=\tau^*(I,\phi,s).\]}

\smallskip

To formulate the next assumption we need to introduce some other tools.
\begin{itemize}
\item
Let the auxiliary functions
\begin{equation} \label{generator}
\mathcal{L}(I, \phi, s) =  L( \tau^*(I,\phi,s), I, \phi, s), \quad
\mathcal{L}^{*}(I,\theta)= \mathcal{L}(I, \theta, 0).
\end{equation}
We regard $\mathcal{L}^{*}(I,\theta)$ as a function on the set
\begin{equation*}
\textrm{Dom}(\mathcal{L}^{*})=\{ (I,\theta)\in\mathbb{R}^d\times\mathbb{T}^d\,|\,\exists s\in\mathbb{T}^1\textrm { s.t. }(I,\theta+\omega(I)s,s)\in U^-\}.
\end{equation*}
\end{itemize}

The second part of the  assumption (A3) is:

\smallskip

\emph{(A3.b) Assume that the reduced Poincar\'{e} function $\mathcal{L}^{*}(I, \theta)$ satisfies   that  $J\nabla \mathcal{L}^*(I,\theta)$
is transverse, relative to $\mathbb{R}^d\times\mathbb{T}^d$, to the level set $\{I=I_*\}$ at some point
$(I_*,\theta_*)=(I_*, \phi_*-\omega(I_*)s)$, with $(I_*, \phi_*,s)\in U^-$.
That is:
\begin{equation} \label{transversalitycond}
\frac{\partial \mathcal{L}^*}{\partial \theta}(I_*,\theta_*)\ne 0.
\end{equation}}

\smallskip

We note that the integral in \eqref{MelnikovpotG} is similar to that in  \eqref{eq:S-def} and \eqref{eq:S-def2}, as it concerns the average
effect of the perturbation $H_1$ on a homoclinic orbit of the unperturbed system.

The result below states that, for all small enough regular perturbations satisfying \eqref{transversalitycond}, there exist
trajectories that travel $O(1)$ with respect to the $I$-coordinate, that is, they travel a distance relative to the $I$-coordinate
that is independent of the size of the perturbation. This phenomenon is  referred to as Arnold diffusion.

\begin{thm}\label{thm:diffusion}
Assuming the conditions A1-A3, there exists $\eps_0>0$, and $\rho>0$ such that, for each $\eps\in(0,\eps_0)$,
there exists a trajectory $x(t)$ of the Hamiltonian flow of Hamiltonian \eqref{eqn:hamiltonian} and  $T>0$ such that
\[\|I(x(T))-I(x(0))\|>\rho.\]
\end{thm}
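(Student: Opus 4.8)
The plan is to derive Theorem~\ref{thm:diffusion} as a direct application of Theorem~\ref{thm:main} (and Corollary~\ref{cor:main}) to the a priori unstable system \eqref{eqn:hamiltonian}. First I would set up the geometric framework: the unperturbed Hamiltonian has the normally hyperbolic invariant manifold $\Lambda_0=\{p=q=0\}$, which carries the symplectic form $dI\wedge d\phi + dE\wedge dt$ after suspending the time-periodic system to an autonomous one on an extended phase space; its stable and unstable manifolds coincide along the separatrices of the penduli. By the standard persistence theory for normally hyperbolic invariant manifolds, for $|\eps|$ small there is a nearby manifold $\Lambda_\eps$, symplectic, with a parametrization $k_\eps:\Lambda\to\Lambda_\eps$ where $\Lambda=B^d\times\mathbb{T}^d$ (times the extra circle $\mathbb{T}^1$, which I will suppress or absorb into the time-reparametrized flow), and the inner dynamics $\tilde f_{\eps\mid\Lambda}$ is an $O(\eps)$-perturbation of the integrable rotation $(I,\phi)\mapsto(I,\phi+\omega(I))$, hence preserves a smooth measure and (being volume-preserving on a bounded invariant region, or by restricting to a compact invariant slab of actions) satisfies the Poincar\'e recurrence theorem on a neighborhood of any compact set of actions, so a.e.\ point is recurrent.

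Next I would invoke the Melnikov/averaging computation of \cite{DelshamsLS2006,DelshamsLS08a}: because $W^u(\Lambda_0)=W^s(\Lambda_0)$ when $\eps=0$, first-order perturbation theory via the Poincar\'e function $L(\tau,I,\phi,s)$ in \eqref{MelnikovpotG} governs the splitting. Assumption A3 — the existence of a nondegenerate critical point $\tau^*(I,\phi,s)$ of $\tau\mapsto L(\tau,I,\phi,s)$ on $U^-$ — guarantees, by the implicit function theorem, a transverse homoclinic intersection along a homoclinic channel $\Gamma_\eps$ for $0<|\eps|\ll1$, with an associated scattering map $\sigma_\eps$ that extends smoothly to $\sigma_0=\mathrm{Id}$ as $\eps\to0$, and admits the expansion $\tilde\sigma_\eps=\mathrm{Id}+\eps\,J\nabla\mathcal{L}^*+o(\eps)$ on the domain corresponding to $U^-$; here $\mathcal{L}^*$ is the reduced Poincar\'e function \eqref{generator} and $\mu(\eps)=\eps$. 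Thus hypothesis \eqref{eqn:seps} of Theorem~\ref{thm:main} holds with $S=\mathcal{L}^*$.

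Then I would check the transversality hypothesis of Corollary~\ref{cor:main}: condition \eqref{transversalitycond}, $\partial\mathcal{L}^*/\partial\theta(I^*,\theta^*)\neq0$, says precisely that $J\nabla\mathcal{L}^*$ is transverse to the level set $\{I=I^*\}$ at the point $(I^*,\theta^*)$, which lies in the domain of $\sigma_\eps$ since $(I_*,\phi_*,s)\in U^-$. Corollary~\ref{cor:main} then yields $\eps_0>0$ and $\rho>0$ such that for each $\eps\in(0,\eps_0)$ there is an orbit $\{z_i\}_{i=0,\dots,n}$ of the time-$1$ map $f_\eps$ with $\|I(z_n)-I(z_0)\|>\rho$. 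Finally I would translate this back to the continuous-time flow: the $z_i$ are points on a genuine trajectory $x(t)$ of the Hamiltonian flow of $H_\eps$ (the time-$1$ map $f_\eps$ is the return map of the suspended flow), and taking $T$ to be the total elapsed time $\sum k_i$ between $z_0$ and $z_n$ gives a trajectory with $\|I(x(T))-I(x(0))\|>\rho$, as claimed.

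The main obstacle, which I expect to occupy most of the real work behind this plan, is the justification that the abstract scattering-map expansion \eqref{eqn:seps} holds with $S$ equal to the reduced Poincar\'e function $\mathcal{L}^*$ in the several-penduli setting with $n>1$ hyperbolic directions: one must verify that the reduction from the full Poincar\'e function $L(\tau,I,\phi,s)$ (a function on $\mathbb{R}^n$ in the phase variables $\tau$) to $\mathcal{L}$ via the nondegenerate critical point $\tau^*$ is exactly what produces the generating function of the scattering map, and that the homoclinic channel persists as a genuine transverse intersection of the perturbed invariant manifolds of the multi-dimensional NHIM — this is where the nondegeneracy in A2 (for the individual penduli) and A3 (for the critical point $\tau^*$) get used together, and where one needs the $C^r$ smoothness of A1 with $r$ large enough to guarantee $\ell\geq 2$ for the NHIM and its foliations. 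Everything else — recurrence of the inner dynamics, the shadowing, and the passage from the discrete orbit to the continuous trajectory — is then a mechanical application of the already-established Theorems~\ref{lem:key1}, \ref{lem:main1}, \ref{thm:main} and Corollary~\ref{cor:main}.
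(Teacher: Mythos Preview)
Your proposal is correct and follows essentially the same route as the paper: reduce to the autonomous extended system, identify $\Lambda_0=\{p=q=0\}$ as a NHIM, persist it to $\Lambda_\eps$, use A3 to obtain the scattering map with the expansion $\tilde\sigma_\eps=\mathrm{Id}+\eps\,J\nabla\mathcal{L}^*+O(\eps^2)$, and then invoke Theorem~\ref{thm:main}/Corollary~\ref{cor:main}. Two small remarks: (i) the ``main obstacle'' you anticipate---that $S=\mathcal{L}^*$ in the multi-pendulum setting---is not re-proved in the paper but simply cited from \cite{DelshamsLS08a,DelshamsGLS2008,GideaL06b}, so there is no real work hidden there; (ii) your aside that recurrence holds because the inner map is volume-preserving ``on a bounded invariant region'' is not justified (nothing prevents orbits of $\tilde f_\eps$ from escaping in $I$), but this is harmless since you ultimately invoke Corollary~\ref{cor:main}, whose dichotomy argument handles exactly this point---either a bounded invariant set exists and Poincar\'e recurrence applies, or the inner dynamics alone already produces diffusion.
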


\begin{rem}
We emphasize some advantages of Theorem \ref{thm:diffusion} in comparison to  the main results of
\cite{DelshamsLS2006,DelshamsH2009,DelshamsLS16,GideaR12}:
\begin{itemize}
\item
Both the phase space of $h_0$ and that of the system of penduli are multi-dimensional.
\item
We do not assume a convexity condition on the unperturbed Hamiltonian $H_0(I,\phi,p,q)=h_0(I)+\sum_{i=1,\ldots,n}\pm(p_i^2/2+V_i(q_i))$,
which is typically required when using variational methods.
\item
We do not assume that $h_0$ satisfies a non-degeneracy condition that
$I\mapsto\partial{h_0}/\partial{I}$ is a diffeomorphism, or a convexity condition that $\partial^2h_0/\partial I_i\partial I_j$
is strictly positive/negative definite. In the lack of such conditions, one cannot apply the KAM theorem, hence cannot construct transition
chains of KAM tori. Also, Aubry-Mather theory cannot be  applied.
\item
We do not assume that $H_1$ is a trigonometric polynomial.
Moreover, we note that  condition (A3) is satisfied by a $C^r$ open and dense set of perturbations~$H_1$.

In the method of \cite{DelshamsLS2013} one needs to check a
different condition (which is clearly generic)
around every first order resonance. In concrete systems,
when one is interested in a practical problem (e.g., in the three-body problem)
and not in generic statements, the verification of
the mechanism of \cite{DelshamsLS2013} is possible, albeit
tedious.
With the present
method, the verification in concrete systems of interest
is much more straightforward, see e.g.,  \cite{CapinskiGL14}.

\end{itemize}
\end{rem}

From now on, we make the following notation convention.  When we
 say that some error term is bounded by a constant, or by $O(\eps^a)$, or by $O(\eps^a\ln(\eps^b))$
we   mean uniformly on some compact set.

\begin{proof}[Proof of Theorem \ref{thm:diffusion}.]
We describe the  geometric structures that organize the dynamics,  following \cite{DelshamsLS2006,DelshamsLS16}.
We emphasize that, once the geometric set-up is laid out, the dynamics argument to show the existence of diffusing orbits is very different.

The time-dependent Hamiltonian in \eqref{eqn:hamiltonian} is transformed into an autonomous Hamiltonian by introducing a new variable $A$,
symplectically conjugate with $t$, obtaining the $(n+d+1)$ degrees of freedom Hamiltonian system
\begin{equation}\label{eqn:hamiltonianex}
\tilde H_\eps(p, q, I,\phi,A,t)=h_0(I)+\sum_{i=1}^{n}\pm\left(\frac{1}{2}p^2_i+V_i(q_i)\right)+A+\eps H_1(p, q,I,\phi, t; \eps).
\end{equation}
The variable  $A$ does not play any dynamical role, as it does not appear in any of the Hamiltonian equations for any of the variables, including itself.

With an abuse of notation, we denote
\[
{\tilde\Lambda}_0:=\{(p,q,I,\phi,A,t)\,|\, p=q=0, \, I\in\mathcal{I},\, A\in\mathbb{R}, \,(\phi,t) \in \mathbb{T}^{d+1}  \}.
\]
This is a normally hyperbolic invariant manifold for the extended Hamiltonian flow, which is diffeomorphic to
$(\mathbb{R}^d\times\mathbb{T}^d) \times(\mathbb{R}\times\mathbb{T})$.

We  fix an energy manifold $\{\tilde H_\eps=\tilde h\}$ for some $\tilde h$, and restrict to a Poincar\'e section
$\{t=s\}$ for the Hamiltonian flow.
The resulting  manifold is a $(2n+2d)$-dimensional manifold which we denote $M_\epsilon$.
The first return map to $M_\eps$ of the Hamiltonian flow is a $C^{r}$-differentiable map denoted $f_\epsilon$.

 The manifold
\[
\Lambda_0:=\{(p,q,I,\phi)\,|\,  p=q=0 , \, I\in\mathcal{I}, \, \phi \in \mathbb{T}^d  \}\subseteq M_0,
\]
is a normally hyperbolic invariant manifold  for $f_0$, which is independent of the section $\{t=s\}$.
Note that $\Lambda_0$ is diffeomorphic to $\mathbb{R}^d\times\mathbb{T}^d$.

Thus, both $\tilde{\Lambda}_0$ and $\Lambda_0$ are non-compact.

Note that the restriction of $f_0$ to  $\Lambda_0$ is an integrable map, as  $f_0(0,0,I,\phi)=(0,0,I,\phi+\omega(I))$, and $\Lambda_0$ is foliated by
invariant $d$-dimensional tori given by $\{I=ct\}$.

Choose a closed ball $\bar B_{R}(I_*)$ in the action space $\mathbb{R}^d$,     such that  $J\nabla \mathcal{L}^*(I,\theta)$ is transverse,
relative to $\textrm{Dom}(\mathcal{L}^{*})$, to each action level set $\{I=I_0\}$ -- which is an invariant torus --,
 with $I_0\in \bar B_{R}(I_*)$.

Denote \begin{eqnarray*}
\tilde{\Lambda}'_0:&=&\{(p,q,I,\phi,A,t)\,|\, p=q=0, \, I\in \bar B_{R}(I_*), \, A\in\mathbb{R}, \,(\phi,t) \in \mathbb{T}^{d+1}\},\\
{\Lambda}'_0:&=&\{(p,q,I,\phi)\,|\, p=q=0, \, I\in \bar B_{R}(I_*), \, \phi \in \mathbb{T}^d\},
\end{eqnarray*}
which are normally hyperbolic invariant manifolds with boundary for the  flow, and respectively for the map,  corresponding  to $I\in \bar B_{R}(I_*)$.

Consider now the perturbed Hamiltonian system.
Using a $C^r$-differentiable bump function we can modify the Hamiltonian
$\tilde H_\eps$ to another
Hamiltonian $\tilde {\mathcal{ H}}_\eps$ that coincides with the original one  for all $(p,q,I,\phi,A,t)$ with  $I\in  \bar B_{R}(I_*)$,
and coincides with  $H_0$  for all $(p,q,I,\phi,A,t)$ with  $I$ outside of some open ball
$B_{R'}(I_*)\supseteq \bar B_{R}(I_*)$, with $R'>R$.
For all  $\eps$ sufficiently small there exists a normally hyperbolic invariant manifold $ {\tilde{\Lambda}}_\eps$ for the  flow of the modified Hamiltonian
$\tilde {\mathcal{ H}}_\eps$.
The manifold $\tilde{\Lambda}_\eps$ is diffeomorphic to $\tilde{\Lambda}_0$ via  a $C^\ell$-smooth parametrization
$\tilde k_\eps: {\tilde{\Lambda}}_0\to\tilde{\Lambda}_\eps$, with $\tilde k_0=\textrm{Id}$.
Using this parametrization, we can describe $ {\tilde{\Lambda}}_\eps$ in terms of the coordinates $(I,\phi,A,t)\in \tilde{\Lambda}_0$.
Similarly, there exists a $C^\ell$ smooth parametrization $k_\eps:\Lambda_0\to\Lambda_\eps$, with $k_0=\textrm{Id}$.

The manifold ${\tilde\Lambda}_\eps$ is not unique, as it depends on the modificated  Hamiltonian vector field of Hamiltonian
$\tilde {\mathcal{ H}}_\eps$,
but what is important for us is that the extended Hamiltonian
$\tilde {\mathcal{ H}}_\eps$, coincides with $\tilde H_\eps$ at
the  points with $I\in\bar B_{R}(I_*)$.
Therefore, as we will find an orbit of $\tilde {\mathcal{ H}}_\eps$ whose action will  stay in $I\in\bar B_{R}(I_*)$,
this orbit will also be a real orbit of $\tilde H_\eps$.

Let
 \begin{eqnarray*}
 {\tilde\Lambda}'_\eps:&=&\{\tilde k_\eps(p,q,I,\phi,A,t)\,|\, p=q=0, \, I\in \bar B_{R}(I_*), \, A\in\mathbb{R}, \, (\phi,t) \in \mathbb{T}^{d+1}\},\\
{\Lambda}'_\eps:&=&\{k_\eps(p,q,I,\phi)\,|\, p=q=0, \,  I\in \bar B_{R}(I_*), \, \phi \in \mathbb{T}^d\},
\end{eqnarray*}
be the normally hyperbolic manifolds  for the perturbed flow, and respectively for the perturbed map,  corresponding  to
$I\in \bar B_{R}(I_*)$.
They are not invariant, but only locally invariant.
The local invariance means, e.g., in the case  of $\Lambda'_\eps$,   that there exists a neighborhood $\mathcal{V}$ of $\Lambda'_\eps$ in $M_\eps$,
such that any orbit of $f_\eps$ that stays in $\mathcal{V}$ for all time is actually contained in $\Lambda'_\eps$.
The neighborhood $\mathcal{V}$ can be chosen independent of $\eps$.
The manifold $\Lambda'_\eps$ is compact and   symplectic (see~\cite{DelshamsLS08a}).

Condition (A3)   allows one to define a scattering map
$\sigma_\eps:\Omega^{-}(\Gamma_\eps)\to \Omega^{+}(\Gamma_\eps)$, with $\Omega^{-}(\Gamma_\eps),\Omega^{+}(\Gamma_\eps)\subseteq\Lambda'_\eps$.
We will restrict to a homoclinic channel $\Gamma_\eps$ that is compact.

As mentioned before, it is more convenient to express the scattering map $\sigma_\eps$ as a map on $\Lambda_0$, via
$\tilde\sigma_\eps=k_\eps^{-1}\circ \sigma_\eps\circ k_\eps$.
By hypothesis (A.3), we have
$U^- \subseteq \textrm{dom}(\tilde\sigma_\eps)= k_\eps^{-1}(\Omega^{-}(\Gamma_\eps))$.
In a similar fashion, we consider
$\tilde f_\eps=k_\eps^{-1}\circ {f_\eps}_{\mid\Lambda_\eps}\circ k_\eps$ on $\Lambda_0$.

The papers \cite{DelshamsLS08a,DelshamsGLS2008,GideaL06b} show that condition  (A3.a) implies   that the scattering map can be expressed as follows
\[
\tilde\sigma_\eps (I,\phi)=(I,\phi)+\eps J\nabla   \mathcal{L}^*(I,\phi -\omega(I)s) +O(\eps^2),
\]
which is of the form \eqref{eqn:seps} with $\mu(\eps)=\eps$, and $g(\mu(\eps))=\eps ^2$.
 Of course, both the scattering map $\tilde\sigma_\eps$ and the Poincar\'{e} map $\tilde f_\eps$ depend on the chosen section $t=s$.
Therefore we can apply
Theorem \ref{thm:main} for the normally hyperbolic invariant manifold $\Lambda_\eps$ and the scattering map $\tilde\sigma_\eps$.
Since we are actually restricting ourselves to the locally invariant manifold $\Lambda'_\eps\subseteq\Lambda_\eps$,
which is contained in the domain where the modified Hamiltonian $\tilde {\mathcal{ H}}_\eps$ coincides with $\tilde H_\eps$,
the diffusing orbits that we obtain correspond to diffusing orbits of $\tilde H_\eps$.

The function $\mathcal {L}^*$ involved in condition (A3) plays the role of the function $S$ in  Theorem \ref{thm:main}.
Condition \eqref{transversalitycond}  amounts to $J\nabla \mathcal{L}^*$ being  transverse to one level set of the variable $I$,
and hence the results of Theorem \ref{thm:main}, and specially the results of Corollary \ref{cor:main} give us the existence
of a real
orbit which  satisfies the required inequality for a suitable value $\rho>0$ independent of $\eps$.
\end{proof}

\begin{rem} For the  above result, we do not require the non-degeneracy condition that $I\mapsto\omega(I)=\partial{h_0}/\partial{I}$ is a diffeomorphism.
Note that in the case when $d=1$ such a   non-degeneracy condition   implies that $\tilde f_\eps$ is a  monotone twist  map relative to the $(I,\phi)$ coordinates.
In our case, we allow $\tilde f_\eps$ to be a non-twist map, which happens, for instance if $h_0(I)=I^n$ with $n\geq 3$ odd.
It is well known that non-twist maps arise in many concrete models, e.g.,  in magnetic fields of toroidal plasma devices
(such as tokamaks, which have reversed magnetic shear), models   of transport by traveling waves in shear flows with zonal flow,
and models of satellite orbits near critical inclination.
Unlike twist maps, non-twist maps have regions where the KAM theorem and the Aubry-Mather theory do not apply;
see \cite{MorrisonCNG1997,DelshamsL2000} and the references listed therein.
\end{rem}

\section{Proofs of the Main Results}\label{proofs}
\subsection{Proof of Theorem \ref{lem:main1}}\label{prooflem:main1}
Denote by $\mu$ the measure referred in the statement of the theorem,  which is absolutely continuous with respect to the Lebesgue measure on $\Lambda$.
Then $f$ preserves $\mu$, and, $\sigma$
takes positive   measure sets onto positive  measure sets.

Choose a small open disk $B_0$ of $x_0$ in $\Lambda$, with $B_0\subseteq \mathcal{U}$ such that  $B_i:=\sigma^{i}(B_0)\subseteq \mathcal{U}$, and
$\textrm{diam}(B_i)\leq \delta/2$, for all $i=0,\ldots,n$.
For the  given pseudo-orbit $\{x_i\}$ of $\sigma$, with $x_{i+1}=\sigma(x_i)$, we have that $x_i\in B_i$ for all $i$.
We will use Poincar\'e recurrence to produce a new pseudo-orbit $\{y_{i}\}$, with $y_{i+1}=f^{m_i}\circ\sigma\circ f^{n_i}(y_i)$,
where $m_i,n_i$ are as in Lemma \ref{lem:key1-alternative}, such that $y_i\in B_i$ for all $i$, and hence  $d(y_i,x_i)\leq \delta/2$.
Invoking Lemma \ref{lem:key1-alternative} will provide us with a true orbit $\{z_i\}$ with $z_{i+1}=f^{m_i+n_i}(z_i)$,
such that $d(z_i,y_i)\leq \delta/2$, hence $d(z_i,x_i)<\delta$.

We first establish some basic facts about recurrent points.
\subsubsection{First recurrence property.}
For  an open set $B\subseteq \mathcal{U}\subseteq \Lambda$, a subset $A\subseteq B$ of positive measure in $B$,  and $k^*> 0$, we define
\begin{equation*}\begin{split}
P^{k^*}(A,B)&=\{x\in A\,|\, (f^{k^*})^t(x)\in B \textrm{ for  some } t\geq 1\}.
\end{split}\end{equation*}

The set  $P^{k^*}(A,B)\subset A$ consists of the recurrent points of $A$ that return to $B$ under some positive   iteration  of $f^{k^*}$.
Since $\mu$-a.e. point  in $\mathcal{U}$ is recurrent, and  $B\subseteq \mathcal{U}$,
Poincar\'e recurrence  for the map $f^{k^*}$ implies that
$P^{k^*}(A,B)\subseteq A$ has full measure in $A$, hence is of positive measure itself.

For each $x\in P^{k^*}(A,B)$ we denote by $t_{min}(x)$  the smallest positive integer $t\geq 1$ with $(f^{k^*})^t(x)\in B$. Let $$
\Theta=\{\tau\geq 1\,|\, \exists x\in P^{k^*}(A,B) \textrm { s.t. }t_{min}(y)=\tau\}
$$
be the set of the return times to $B$.
For each $\tau\in\Theta$,  let
\begin{equation}\label{eq:P}
P^{k^*}_\tau(A,B)=\{x\in P^{k^*}(A,B)\,|\, t_{min}(y)=\tau\}
\end{equation}
be the set of points with a prescribed return time $\tau\in\Theta$, under $f^{k^*}$.
Since $P_{k^*}(A,B)=\bigcup_{\tau\geq 1} P^{k^*}_\tau(A,B)$, with the sets $P^{k^*}_\tau(A,B)$ mutually disjoint,
it follows that there exists $\tau^*\geq 1$ such that $\mu(P^{k^*}_{\tau^*}(A,B))>0$.
Since $f^{k^*}$ is area preserving, $\mu(f^{k^*\tau^*}(P^{k^*}_{\tau^*}(A,B))=\mu(P^{k^*}_{\tau^*}(A,B))>0$.

Thus, every point in $P^{k^*}_{\tau^*}(A,B)\subseteq A\subseteq B$ will
return to a point in $B$ under $f^{k^*\tau^*}$.
The set
\begin{equation}\label{eq:Q}
Q^{k^*}_{\tau^*}(B,A):=f^{k^*\tau^*}(P^{k^*}_{\tau^*}(A,B))\subseteq B
\end{equation}

has positive measure in $B$.
In terms of $f$, every point in $P^{k^*}_{\tau^*}(A,B)\subseteq A\subseteq B$ will return to a point in $Q^{k^*}_{\tau^*}(A,B)\subseteq B$ in exactly
$k^*\tau^*\geq k^*$ iterates.

\subsubsection{Second recurrence property.}

Consider now two open sets $B\subseteq \mathcal{U}$ and   $B'=\sigma(B)\subseteq \mathcal{U}$.
Let $A$ be a subset of $B$ of positive measure.
By the above, $P^{k^*}_{\tau^*}(A,B)$ and $Q^{k^*}_{\tau^*}(A,B)$ are   positive measure subsets of $B$.
Since the scattering map $\sigma$
sends positive measure sets onto positive measure sets, it follows that
\begin{equation}\label{eq:Aprime}
A':=\sigma(Q^{k^*}_{\tau^*}(A,B)) \subset B'
\end{equation}
is a positive measure subset of $B'$.

\subsubsection{Inductive construction of pseudo-orbits.}

Starting with $B_0$, we construct inductively a nested sequence of   subsets $\Sigma_i\subset B_0$ of positive measure of $B_0$,
such that each set is carried onto a positive measure subset of $B_i$,
$i=1, \ldots, n$, via successive applications of some large powers of  $f$ interspersed with applications of $\sigma$.

Use Lemma \ref{lem:key1-alternative} for $\delta/2$, and consider the value   $n^*$ depending on $\delta/2$ as  provided by this lemma.
Let $A_0:=B_0$, let $\tau_0\geq 1$ such that $P^{n^*}_{\tau_0}(A_0,B_0)\subset A_0$ (see \eqref{eq:P}) has positive measure, and
\[
\Sigma_0:=P^{n^*}_{\tau_0}(A_0,B_0)\subseteq A_0.
\]
Consider the set $Q^{n^*}_{\tau_0}(A_0,B_0)\subseteq B_0$ (see \eqref{eq:Q}), which has positive measure.
Then consider the set $A'_1:=\sigma(Q^{n^*}_{\tau_0}(A_0,B_0))\subseteq B_1$ (see \eqref{eq:Aprime}), which has positive measure in $B_1$.
Let $n_0:=n^*\tau_0$ and consider the value   $m^*_0=m^*_0(n_0)$  given by Lemma \ref{lem:key1-alternative} for $\delta/2$.
There exists $\tau'_0\geq 1$ such that the set
$P^{m^*_0}_{\tau'_0}(A'_1,B_1)\subseteq A'_1\subseteq B_1$ (see \eqref{eq:P}) has positive measure.
Then the set $Q^{m^*_0}_{\tau'_0}(A'_1,B_1)\subseteq B_1$ (see \eqref{eq:Q}) also has positive measure in $B_1$.

Each point $y_1\in  Q^{m^*_0}_{\tau'_0}(A'_1,B_1)$ is of the form
$y_1=f^{m^*_0\tau'_0}(x')$, for some  $x'\in  P^{m^*}_{\tau'_0}(A'_1,B_1)$ and $\tau'_0\geq 1$;
each such $x'$ is of the form $x'=\sigma(x)$ for some $x\in Q^{n^*}_{\tau_0}(A_0,B_0)$; and each such $x$ is of the form $x=f^{n^*\tau_0}(y_0)$ for some
$y_0\in P^{n^*}_{\tau_0}(A_0,B_0)=\Sigma_0$ and $\tau_0\geq 1$.
Denote $m_0:=m^*_0\tau'_0$ and $A_1:=Q^{m^*_0}_{\tau'_0}(A'_1,B_1)\subseteq B_1$.
Thus, each $y_1\in  A_1$ can be written as
\begin{equation}\label{eqn:main01}
y_1=f^{m_0}\circ \sigma\circ f^{n_0}(y_0)
\end{equation}
for some $y_0\in \Sigma_0$, $n_0\geq n^*$ and $m_0\geq m^*$,  where $m_0 =m^*_0  \tau'_0$ and $n_0 =n^*  \tau_0$.

Denote by $\Sigma_1$ the set of points $y_0\in \Sigma_0$ which correspond, via   \eqref{eqn:main01}, to some point $y_1\in A_1$.
We obviously have $\Sigma_1\subseteq \Sigma_0$.
The preliminary facts established above show that  $\Sigma_1$ is a positive measure subset of $B_0$.

Assume that at the $j$-th step we have constructed a subset $A_j\subseteq B_j$, which has positive measure in $B_j$,
such that each point
$y_j\in A_j$
is of the form
\begin{equation}\label{eqn:main02}
y_j=f^{m_{j-1}}\circ \sigma\circ f^{n_{j-1}}\circ \ldots \circ
f^{m_{0}}\circ \sigma\circ f^{n_{0}}(y_{0}),
\end{equation}
some $y_0\in A_0 \subset B_0$, with $n_0\geq n^*, \ldots,n_{j-1}\geq n^*$,  and $m_0\geq m^*_0,\ldots, m_{j-1}\geq m_{j-1}^*$,
where $n^*$ and the $m^*_k$'s are  as in Lemma \ref{lem:key1-alternative}.
Let $\Sigma_j$ be the set of points $y_0$ for which the corresponding $y_j$ given by \eqref{eqn:main02}  is in $A_j$.
We assume that $\Sigma_j\subseteq\Sigma_{j-1}\subseteq \ldots\subseteq \Sigma_0$, and that $\Sigma_j$ is a positive measure subset of $B_0$.

It follows from the above preliminaries that,  for some $\tau_j\geq 1$, the sets
$$
P^{n^*}_{\tau_j}(A_j,B_j)\subseteq B_j\,  \mbox{and} \  Q^{n^*}_{\tau_j}(A_j,B_j)\subseteq B_j=f^{n^*\tau_j}(P^{n^*}_{\tau_j}(A_j,B_j))
$$
have positive measure.
Each point $y\in P^{n^*}_{\tau_j}(A_j,B_j)$ returns to a point in $Q^{n^*}_{\tau_j}(A_j,B_j)\subseteq B_j$ after exactly $n^*\tau_{j}$ iterates of $f$.
Denote $n_j:=n^*\tau_j$.
Since $\sigma$ is measure preserving, the set $A'_{j+1}:=\sigma(Q^{n^*}_{\tau_j}(A_j,B_j))\subset B_{j+1}$ has positive measure  in $B_{j+1}$.
Let $m^*_j$, depending on $\delta/2$ and on $n_0,\ldots, n_{j},m_0,\ldots,m_{j-1}$,  be as  in the Lemma \ref{lem:key1-alternative}.
There exists $\tau'_j\geq 1$ such that $P^{m^*_j}_{\tau'_j}(A'_{j+1},B_{j+1})\subseteq A'_{j+1}\subseteq B_{j+1}$ and
$Q^{m^*_j}_{\tau'_j}(A'_{j+1},B_{j+1})\subseteq B_{j+1}$ have positive measure.
Each point $y\in P^{m^*_j}_{\tau'_j}(A'_{j+1},B_{j+1})$ returns to a point in
$Q^{m^*_j}_{\tau'_j}(A'_{j+1},B_{j+1})\subseteq B_{j+1}$ after exactly $m^*_j\tau'_{j}$ iterates of $f$.
Denote $A_{j+1}=Q^{m^*_j}_{\tau'_j}(A'_{j+1},B_{j+1})$, which is of positive measure.
Then each point $y_{j+1}\in A_{j+1}$ is of the form
\begin{equation}\label{eqn:main03}
y_{j+1}=f^{m_j}\circ \sigma\circ f^{n_j}(y_j)
\end{equation}
for some $y_j\in A_j$, where $n_j=n^* \tau_j\geq n^*$ and $m_j= m^*_j \tau'_j\geq m^*_j$, with $\tau_j,\tau'_j\geq 1$.

Since $y_j$ is of the form \eqref{eqn:main02}, then
\begin{equation}\label{eqn:main03a}
y_{j+1}=f^{m_{j}}\circ \sigma\circ f^{n_{j}}\circ \ldots \circ
f^{m_{0}}\circ \sigma\circ f^{n_{0}}(y_{0}),
\end{equation}
for some $y_0\in \Sigma_0$, with $n_0\geq n^*, \ldots,n_{j-1}\geq n^*$,  and $m_0\geq m^*_0,\ldots, m_{j}\geq m_{j}^*$.
Denoting by $\Sigma_{j+1}$ the  set  of points $y_0\in \Sigma_0$  that yield points $y_{j+1}$  given by \eqref{eqn:main04},
we obtain that $\Sigma_{j+1}\subseteq \Sigma_{j}$ is of positive measure.
 This completes the induction step.

\subsubsection{Shadowing of pseudo-orbits}
At the $n$-th step we obtain a nested sequence of sets $\Sigma_0\supseteq \Sigma_1\supseteq\cdots\supseteq\Sigma_n$, such that each set $\Sigma_j$, $j=0,\ldots,n$,
has positive measure in $B_0$.
Each  point $y_0\in \Sigma _n$ generates  a pseudo-orbit
of the form
\begin{equation}\label{eqn:main04}
y_{j+1}=f^{m_j}\circ \sigma\circ f^{n_j}(y_j),
\end{equation}
for $j=0,\ldots,n-1$, where $n_j,m_j$ are as in Lemma \ref{lem:key1-alternative}.
By construction, each point $y_j$ is inside $B_j$ hence    $d(y_j,x_j)<\delta/2$.
Then Lemma \ref{lem:key1-alternative} provides the existence of an orbit $\{z_j\}_{j=0,\ldots,n}$  with $z_{j+1}=f^{m_{j}+n_{j}}(z_j)$,
such that $d(z_j,y_j)<\delta/2$.  Hence $d(z_j,x_j)<\delta$   for all $j$.
\qed

\begin{rem}
\label{rem:reccurencetime}
In the proof of Theorem \ref{lem:main1},
instead of using the Poincar\'e recurrence theorem we can use the weak recurrence property given by Proposition \ref{prop:weakrecurrence}.
Starting with $B'_0=B_0$, there exists $n_0\geq n^*$ such that\break $f^{n_0}(B'_0)\cap B'_0\neq\emptyset$.
The set $\tilde B'_0=f^{n_0}(B'_0)\cap B'_0$ is an open set in $B'_0$, and $\sigma(\tilde B'_0)\subseteq B_1$.
There exists $m_0\geq m^*_0$ such that $f^{m_0}(\sigma(\tilde B'_0))\cap \sigma(\tilde B'_0)\neq\emptyset$.
The set $B'_1:=\sigma(f^{m_0}(\sigma(\tilde B'_0))\cap \sigma(\tilde B'_0))$ is an open set in $B_1$.
The construction can be continued recursively as before.
Given the open set $B'_{j}\subseteq B_{j}$ obtained at the end of the $(j-1)$-th step, at step $j$ we construct $\tilde B'_{j}=f^{n_{j}}(B'_{j})\cap B'_{j}$
for $n_{j}\geq n^*$,   $\sigma(\tilde B'_{j})\subseteq B_{j+1}$, and $B'_{j+1}:=f^{m_{j}}(\sigma(\tilde B'_{j}))\cap \sigma(\tilde B'_{j})\neq\emptyset$
for $m_{j}\geq m^*_{j}$.
The initial points $y_0\in B_0$ which generate   pseudo-orbits of the form \eqref{eqn:main04}, for $j=0,\ldots, n-1$, form an open set $\Sigma_n\subseteq B_0$.

This approach yields explicit estimates of the return times to $B'_{j}$ and $\sigma(\tilde B'_{j})$, given by  $O(1/\mu(B'_{j}))$ and
$O(1/\mu(\sigma(\tilde B'_{j})))$, respectively.
These estimates on the return time, together with the data on the  hyperbolic expansion/contraction rates and on the  angle of intersection between
the stable and unstable manifolds (see Remark \ref{rem:angle}), can be used to obtain explicit -- but far from optimal -- estimates on the diffusion time.
\end{rem}

\subsection{Proof of Theorem \ref{thm:main}}\label{proofthm:main}

We notice that \eqref{eqn:seps} is reminiscent of the forward Euler method with step $\mu(\eps)$ for ordinary differential equations.

As  $J\nabla S(\tilde x_0)\neq 0$ at some point $\tilde x_0\in U\subset \Lambda_0$, we know that the solution
\begin{equation}\label{eq:flowscattering}
\frac{d}{dt} \tilde\gamma(t)=J\nabla S\circ \tilde\gamma(t)
\end{equation}
with $\tilde\gamma(0)=\tilde x_0$ is not a constant solution.
Let's denote $\tilde\gamma(t)=\phi(t,\tilde x_0)$ where $\phi(t,x)$ is the flow of \eqref{eq:flowscattering}.
Consider $n=\lfloor{\mu}^{-1}\rfloor$,
where $\mu =\mu (\eps)$ is the parameter which appears in \eqref{eqn:seps}, and  $\lfloor \cdot\rfloor$ denotes the floor function.
Define two sequences:
\[
\tilde y_i=\tilde\gamma(t_i)=\phi(\Delta t,\tilde y_{i-1}),\quad \tilde x_i=\tilde \sigma_{\eps}(\tilde x_{i-1}), \quad i=1,2,\dots ,n, \quad
\tilde x_0=\tilde y_0,
\]
where $t_i=i\mu$ and $\Delta t=\mu$.
We will use two facts.

On one hand, applying Gronwall Lemma to the vector field \eqref{eq:flowscattering}, there exists a constant $K_1>0$ such that:
\begin{equation}\label{lipchitzflow}
\|\phi(\Delta t,\tilde y)-\phi(\Delta t, \tilde  y')\|\le \mathrm{e}^{K_1\mu }\|\tilde y-\tilde y'\|, \quad \mbox{for } \tilde y,
\tilde y'\in \mathcal{U}_{\tilde\gamma}.
\end{equation}

On the other hand, also by \eqref{eqn:seps}, calling
$$
\tilde g(\mu)=|g(\mu)|/\mu=o(1),
$$
there  exists a constant $K_2>0$ which is independent of $\mu, \eps$ such that
\begin{equation}\label{bounds}
\|\tilde \sigma_{\eps}(\tilde x)-\phi(\Delta t, \tilde x)\|\le K_2 \mu (\mu +\tilde g(\mu)), \quad  \mbox{for} \quad \tilde x\in \mathcal{U}_{\tilde\gamma}.
\end{equation}
Now one easily obtains that, by \eqref{bounds},
$$
\|\tilde x_1-\tilde y_1\|=\|\tilde \sigma_{\eps}(\tilde x_0)-\phi(\mu,\tilde x_0)\|\le K_2\mu (\mu +\tilde g(\mu))
$$
and, consequently,
$\tilde x_1\in  \mathcal{U}_{\tilde\gamma}$.

Now, using again \eqref{lipchitzflow} and \eqref{bounds}
\[
\begin{array}{rcl}
\|\tilde x_2-\tilde y_2\|&=&
\|\tilde \sigma_{\eps}(\tilde x_1)-\phi(\mu,\tilde y_1)\|\le \|\tilde \sigma_{\eps}(\tilde x_1)-\phi(\mu, \tilde x_1)\|
+\|\phi(\mu,\tilde x_1)-\phi(\mu,\tilde y_1)\|\\
&\le& K_2\mu (\mu +\tilde g(\mu))+\mathrm{e}^{K_1\mu}\|\tilde x_1-\tilde y_1\| \le K_2\mu (\mu +\tilde g(\mu))(1+c).
\end{array}
\]
where we denote by $c=\mathrm{e}^{K_1 \mu} >1$.

Consequently,
 $\tilde x_2\in  \mathcal{U}_{\tilde\gamma}$.
 Now we proceed by induction.
 We assume that, for some $0\le i\le n$, one has that
 \[
 \|\tilde x_i-\tilde y_i\|\le K_2\mu (\mu +\tilde g(\mu)) (1+c+c^2+\dots +c^{i-1}).
 \]

Using again  \eqref{lipchitzflow} and \eqref{bounds} we obtain:
\[
\begin{array}{rcl}
\|\tilde x_{i+1}-\tilde y_{i+1}\|&=&
\|\tilde \sigma_{\eps}(\tilde x_i)-\phi(\mu,\tilde y_i)\|\le \|\tilde \sigma_{\eps}(\tilde x_i)-\phi(\mu,\tilde x_i)\|+
\|\phi(\mu,\tilde x_i)-\phi(\mu,\tilde y_i)\|\\
&\le& K_2\mu (\mu +\tilde g(\mu))+ \mathrm{e}^{K_1\mu} \|\tilde x_i-\tilde y_i\|\\ &\le& K_2\mu (\mu +\tilde g(\mu))(1+c+c^2+\dots +c^i).
\end{array}
\]
Therefore, using that $c=\mathrm{e}^{K_1\mu}$, that $c-1=\mathrm{e}^{K_1\mu}-1\ge K_1 \mu$, and that $n=\lfloor{\mu}^{-1}\rfloor$, for $i=0,1,\dots ,n$,
we have that:
\begin{equation}\begin{split}
\|\tilde x_i-\tilde y_i\|&\le K_2\mu (\mu +\tilde g(\mu))\frac{c^i-1}{c-1}\le \frac{K_2}{K_1} (\mu +\tilde g(\mu)) \mathrm{e}^{i\,K_1\mu}
\le \frac{K_2}{K_1} (\mu +\tilde g(\mu))\mathrm{e}^{K_1}.
\end{split}\end{equation}

As  $\mu =\mu(\eps)= o(\eps)$,  there exists $\eps_1$, such that if $0< \eps \le \eps_1$,
we obtain that the sequence $\tilde x_i$ of the scattering map is also in
$\mathcal{U}_{\tilde\gamma}$ and is $(\mu  +\tilde g(\mu))$-close to the orbit $\tilde\gamma$:
\[
\tilde x_{i+1}=\tilde \sigma_\eps(\tilde x_i)\in\mathcal{U}_{\tilde\gamma}\subset \Lambda,
\quad d(\tilde\gamma(t_i),\tilde x_i)<K  (\mu (\eps) +\tilde g(\mu(\eps))), \quad i=0,\ldots, n,
\]
where $\tilde K=\frac{K_2}{K_1}\mathrm{e}^{K_1}$, and $n=\lfloor{\mu}^{-1}\rfloor$ depends on $\eps$,
for the increasing sequence of parameters $t_i=i\mu\in[0,1]$, $i=0,\ldots,n$.
The points $\tilde x_i$ represent an orbit of $\tilde \sigma_\eps$ in $\Lambda$, therefore  the points
$x_i=k_\eps(\tilde x_i)$, represent an orbit of $\sigma_\eps$ in $\Lambda_\eps$, satisfying
$d( x_i,  \gamma_\eps(t_i))<  K (\mu (\eps) +\tilde g(\mu(\eps)))$, where $\gamma_\eps= k_\eps \circ \tilde\gamma$
and $K$ is a new constant.
This orbit $\tilde x_i$ lies inside the set $\mathcal{U}_{\gamma_\eps}= k_\eps(\mathcal{U}_{\tilde\gamma})
\subseteq\Lambda_\eps$,
where a.e. point is recurrent for $(f_\eps)_{\mid\Lambda_\eps}$.
See Figure~\ref{fig:scattering_path}.
\begin{figure}
\centering
\includegraphics[width=0.5\textwidth]{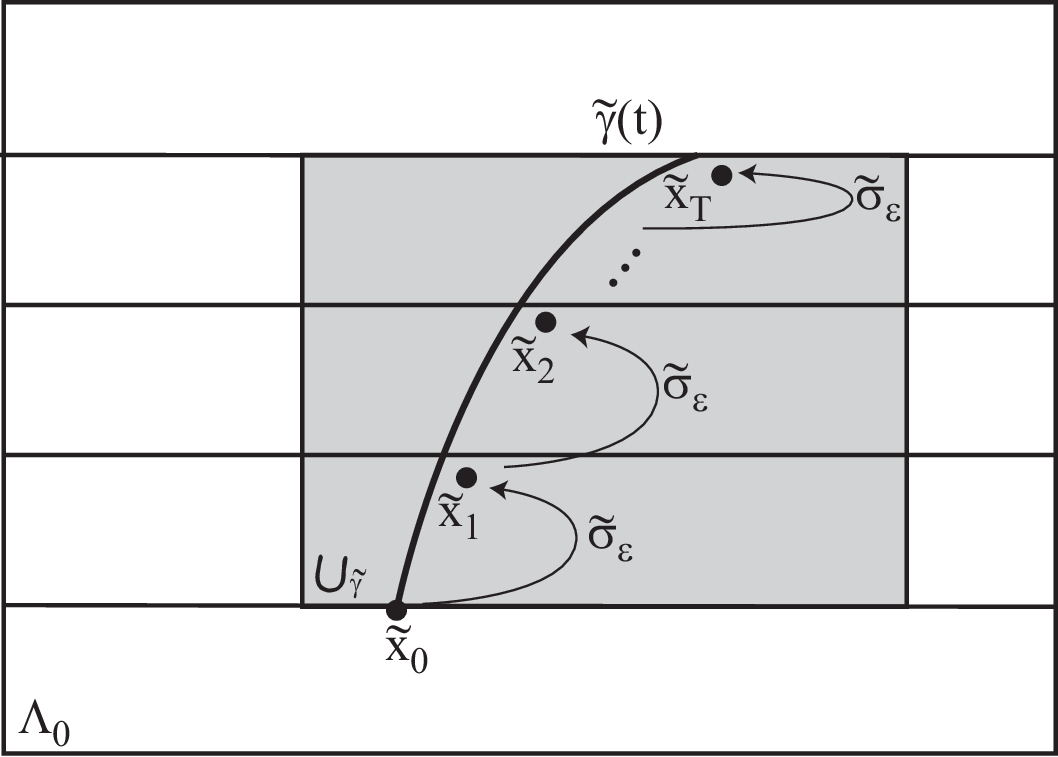}
\caption{A scattering path and a nearby orbit of the scattering map.}
\label{fig:scattering_path}
\end{figure}

We now apply Theorem  \ref{lem:main1} for the orbit $(x_i)_{i=0,\ldots,n}$  of the scattering map
$\sigma_\eps$ on $\Lambda_\eps$,  where $x_i=k_\eps(\tilde x_i)$, and we obtain that, for any  $\delta>0$ there exists
an orbit $z_{i+1}=f^{k_i}_\eps( z_i)$ of $ f_\eps$, which satisfies $d( z_i, x_i)< \delta$, $i=0,\ldots,n$.
Therefore we obtain that
\[
d( z_i, \gamma_\eps(t_i))< \delta  + K (\mu (\eps) +\tilde g(\mu(\eps))) .
\]
\qed

\subsection{Proof of Corollary  \ref{cor:main}}\label{proofcor:main}
By continuity, since $J\nabla S$  is transverse to one level set of the variable $I$ in $\Lambda$,  it is
transverse to a $O(1)$-family of level sets of the variable $I$.
More precisely, there exist  two compact disks $D^d\subseteq B^d$, $E^d\subseteq \mathbb{T}^d$, of radii independent of $\eps$,
such that $J\nabla S$ is transverse to each level set $\{I=I_a\}$ at $\tilde{\sigma}_0(I_a,\phi_a)$ for $I_a\in D^d$, $\phi_a\in E^d$.

Let $\Delta= D^d\times E^d$ and let
\[\Delta ^\infty =\bigcup_{n\geq 0}\tilde{f}^n_\eps (\Delta).\]
Note that $\Delta\subseteq \Delta ^\infty$ and  that $\Delta^\infty$ is positively invariant, i.e., $\tilde{f}_\eps(\Delta ^\infty)\subseteq \Delta ^\infty$.

We have the following dichotomy:

\begin{itemize}
\item[I.]   Either $\mu(\Delta^\infty)=\infty$,
\item[II.]  Or $\mu(\Delta^\infty)<\infty$.
\end{itemize}

Case I implies right away that for every $N>0$, there there exists an orbit $(\tilde{f}^n_\eps(\tilde x))_{n\geq 0}$ of $\tilde{f}_\eps$ in $\Lambda$ for which
$\|I(\tilde{f}^{k_N}(\tilde x))-I(\tilde x)\|>N$, for some $k_N \ge 0$.
It follows immediately that there exist orbits of $f_\eps$ as in the statement of the corollary.
Notice that in this case we obtain diffusing orbits only by applying the inner dynamics; we do not have to use the scattering map.

Now we consider Case II.
Since $\mu(\Delta^\infty)<\infty$ we can apply  the Poincar\'e Recurrence Theorem, so for every open set $\mathcal{U}\subseteq \Delta$,
almost every point of $\mathcal{U}$ is recurrent.

By the assumption on the scattering map, we have that for each $(I_0,\phi_0)\in D^d\times E^d$, the curve $\tilde\gamma(t)$, $t\in[0,1]$,
obtained by integrating the vector field $J\nabla S$ with initial condition at $(I_0,\phi_0)$ is  transverse to every level set $\{I=I_a\}$ at a point
$\tilde\gamma(t)=(I(t),\phi (t))
$,
where
$(I(t),\phi(t)) \in D^d\times E^d=\Delta$, for all $t\in[0,1]$ and  all $0<\eps<\eps_1$.
Thus, there exists $\rho_0>0$, independent of $\eps$, such that
\[\|I(\tilde\gamma(1))-I(\tilde\gamma(0))\|>\rho_0.\]

Choose an $\eps_1$ as in Theorem \ref{thm:main} and fix an $\eps\in(0,\eps_1)$.
Choose $0<\delta<\rho_0/4$, and restrict $\eps _1$ if necessary in such a way that
$K (\mu (\eps) +|g(\mu(\eps))|/\mu(\eps)) \le \delta$ and
let $\rho=\rho_0-4\delta>0$.
Theorem \ref{thm:main}
implies that there is an orbit $(z_i)_{i=0,\ldots,n}$ of $f_\eps$ such that
$d(z_0,\gamma_\eps(0))<2\delta $ and $d(z_n,\gamma_\eps(1))<2\delta$.
Thus, we have $\|I(z_n)-I(z_0)\|>\rho_0 -4\delta =\rho$.
\qed

\subsection{Proof of Lemma~\ref{lem:key1-alternative}}\label{prooflem:key1-alternative}

We provide two proofs of Lemma~\ref{lem:key1-alternative}.

The first proof uses the topological method of correctly
aligned windows  (see Appendix  \ref{section:windows}), and is constructive, in the sense that it provides an explicit algorithm to detect
orbits with prescribed itineraries. It can also be used to provide quantitative estimates for the diffusion time (see Remark \ref{rem:angle} and Remark \ref{rem:reccurencetime}).

The second proof uses the obstruction argument, and is shorter.

\subsubsection{A  proof of Lemma~\ref{lem:key1-alternative} using correctly aligned windows}
\label{proofshadowinglemma}
{$ $}
\smallskip

\paragraph{\em Outline.}\label{outline}
We will construct windows that are correctly aligned, and utilize them in two different ways:
first, to define the integers $n^*$ and $m^*_i$ that appear in the statement of the lemma, and second,  to show that,
for a given  pseudo-orbit as in the statement of the  lemma, there exists a true orbit that shadows it.

For the first part, starting with a homoclinic point, we define a pair of `prototype' windows that are correctly aligned, with one window in a neighborhood
of some negative iterate of the homoclinic point, and another window in a neighborhood of some positive iterate of the homoclinic point.
There are conditions on the number of such iterates which provide us with the integer $n^*$.
Then we consider a second homoclinic point and we construct a second pair of `prototype' windows, in a similar fashion.
To make the second window from the first pair  correctly aligned with the first window from the second pair,
we need to apply a sufficiently large number of iterates that is no less than some integer $m^*$.
When this construction is repeated $i$ times, it provides us with an integer $m^*_i$ that depends on all previous windows.

For the second part, there is given a pseudo-orbit generated by alternatively applying the scattering map and the inner dynamics;
the orders of the iterates of the inner map are required to satisfy conditions that depend on the integers $n^*$ and $m^*_i$.
Then, the above mentioned windows can be used to construct a sequence of correctly aligned windows along the pseudo-orbit.
The existence of an orbit that follows these windows, and, in particular, shadows the given pseudo-orbit, follows from the shadowing property of
correctly aligned windows (Theorem \ref{theorem:detorb}).

\smallskip

We proceed in several steps.

\smallskip

\paragraph{\em Choice of balls.}\label{section:balls}
We choose a system of linearized coordinates (see Appendix \ref{sec:linearized_coordinates}), given by $h:U_\Lambda\to V_\Lambda$,
so that $V_\Lambda$ is contained in a $\delta$-neighborhood of~$\Lambda$.

By the compactness of $\Lambda$, there exists $\delta_1>0$  such that, whenever $x_c\in\Lambda$, $\|v_u\|, \|v_s\|<\delta_1$,  and $0<\rho_c,\rho_u,\rho_s<\delta_1$, the image    of
\begin{equation}\label{eqn:balls}
B_{\rho_c}(x_c)\times B_{\rho_u}(v_u)\times B_{\rho_s}(v_s)\subseteq U_\Lambda
\end{equation}
under $h$  is contained in $V_{\Lambda}$, and has  diameter less than $\delta/2$.

We choose and fix $\rho_c,\rho_u,\rho_s$ as in \eqref{eqn:balls}.


By the normal hyperbolicity of $\Lambda$, there exist
$0<\lambda_-<\lambda_+<\mu^{-1}_+<\mu^{-1}_-<1$ such that
for each pair of balls $B_{\rho_u}(v_u)\subseteq E^u_x$ and $B_{\rho_s}(v_s)\subseteq E^s_x$, with $x\in\Lambda$, we have
\begin{equation}\label{eqn:expcontr}
\begin{split}
B_{\rho_u\mu_-}(Df_{\mid E^u}(v_u)) \subseteq Df_{\mid E^u}(B_{\rho_u}(v_u))\subseteq B_{\rho_u\mu_+}(Df_{\mid E^u}(v_u)),\\
 B_{\rho_s\lambda_-}(Df_{\mid E^s}(v_s)) \subseteq Df_{\mid E^s}(B_{\rho_s}(v_s))\subseteq B_{\rho_s\lambda_+}
 (Df_{\mid E^s}(v_s)).
\end{split}
\end{equation}

\smallskip

\paragraph{\em Lambda Lemma.}\label{sec:lambdalemma}
Consider a homoclinic channel  $\Gamma$ and the corresponding scattering map $\sigma:=\sigma^{\Gamma}:\Omega^{-}(\Gamma)\to \Omega^{+}(\Gamma)$.

Let $p\in\Gamma$ and let $p^-,p^+\in \Lambda$ be the unique points for which $W^u(p^-)\cap W^s(p^+)\cap\Gamma=\{p\}$.
For given $k^-,k^+$, denote:
\begin{eqnarray*}
f^{-k^-}(p^-)&=&h(x_c^-,0,0), \
f^{-k^-}(p)=h(x_c^-,v_u^-,0), \\
f^{k^+}(p^+)&=&h(x_c^+,0,0), \ f^{k^+}(p)=h(x_c^+,0,v_s^+).
\end{eqnarray*}

Due to the compactness  of $\Gamma$ and the exponential contraction of the unstable (stable) fibers under negative (positive) iterates of $f$, there exists $n^*>0$ sufficiently large such that for every $k^-\geq n^*$, $k^+\geq n^*$ we have
\begin{itemize}
\item [(i)]
The point  $f^{-k^-}(p)\in W^u(f^{-k^-}(p^-))$  satisfies $\|v_u^-\|<\rho_u<\delta_1$.
This implies that $f^{-k^-}(p)\in V_\Lambda$ and is $(\delta/2)$-close to $f^{-k^-}(p^-)$;

\item [(ii)]
The point $f^{k^+}(p)\in W^s(f^{k^+}(p^+))$ satisfies $\|v_s^+\|<\rho_s<\delta_1$.
This implies that   $f^{k^+}(p)\in V_\Lambda$  and is $(\delta/2)$-close to $f^{k^+}(p^+)$.
\end{itemize}

Since $\Gamma$ is an homoclinic channel,
$W^s(p^+)$ is transverse to $W^u(\Lambda)$ at $p$, and
$W^u(p^-)$ is transverse to $W^s(\Lambda)$ at $p$.

We apply two versions of the  Lambda Lemma \cite{HirschPalisPughShub1969,FontichM2000,CressonG08,CressonW15,Sabbagh14},
and derive two transversality properties.
The first  version is concerned with the asymptotic behavior of the  backwards  iterates of an $(n_s)$-dimensional manifold transverse to $W^u(\Lambda)$.
The second version is concerned  with   the asymptotic behavior of the  forward  iterates of an $(n_u+n_c)$-dimensional manifold transverse to
$W^s(x)$ where $x\in \Lambda$.

\smallskip

\subparagraph{\em First application of the Lambda Lemma.}
First we apply the  Lambda Lemma to the  $(n_s)$-dimensional manifold $W^s(p^+)$ passing through the point  $p$.
There exists a family of $(n_s)$-dimensional compact disks
$$
\mathscr{D}^s_{k^-}(p)\subset W^s(p^+)
$$
centered  at $p$, such that
$f^{-k^-}(\mathscr{D}^s_{k^-}(p))$ approaches,  in the $C^1$-topology as $k^-\to\infty$,  a disk of fixed radius  in  $W^s(f^{-k^-}(p^-))$
and centered at $f^{-k^-}(p^-)$.
Denote
\begin{equation}\label{Dsk-}
D^s(f^{-k^-}(p)):=f^{-k^-}(\mathscr{D}^s_{k^-}(p)) \subset W^s(f^{-k^-}(p^+))
\end{equation}
the disk centered at $f^{-k^-}(p)$ which is asymptotic to a moving disk of fixed size in $W^s(f^{-k^-}(p^-))$.

Choose $k^-$ large enough and $\mathscr{D}^s_{k^-}(p)$ sufficiently small so that $D^s(f^{-k^-}(p))$  is contained
in $h(B_{\rho_c}(x_c^-)\times B_{\rho_u}(0)\times B_{\rho_s}(0))$ and is $\delta/2$-close to  $W^s(f^{-k^-}(p^-))$ in the $C^1$-topology.

Due to the compactness of $\Lambda$ and $\Gamma$, the size of the disk $D^s(f^{-k^-}(p))$ can be chosen independently of $p\in\Gamma$ and of $k^-$.

\smallskip

\subparagraph{\em First transversality property.}
Since $\mathscr{D}^s_{k^-}(p)$ is transverse to $W^u(\Lambda)$ at $p$:
\begin{equation}
\label{eqn:trans0}
D^s(f^{-k^-}(p))\textrm{ is transverse to }W^u(\Lambda) \textrm{ at }f^{-k^-}(p),\textrm { for any $k^-\ge n^*$}.
\end{equation}

\smallskip

\subparagraph{\em Second application of the Lambda Lemma.}
We now apply the Lambda Lemma to the $(n_c+n_u)$ dimensional manifold $W^u(\Lambda)$ at the point $f^{-k^-}(p)$, which is
is transverse to $W^s(f^{-k^-}(p^+))$ at $f^{-k^-}(p)$.
In particular it  is transverse to $D^s(f^{-k^-}(p))$.

There exists a family of $(n_c+n_u)$-dimensional disks

$$
\mathscr{D}^{cu}_{k^-,k^+}(f^{-k^-}(p))\subseteq W^u(\Lambda),
$$
centered at $f^{-k^-}(p)$, with each disk being a  neighborhood of $f^{-k^-}(p)$ in
$W^u(\Lambda)$,
such that each
$f^{k^++k^-}(\mathscr{D}^{cu}_{k^-,k^+}(f^{-k^-}(p)))$
approaches, in the $C^1$-topology as $k^+\to\infty$,  a disk  of fixed size in the unstable directions,
contained in $W^u(\Lambda)$ and centered at $f^{k+}(p^+)$, as $k^+\to\infty$.

If we choose  $k^-$ and $k^+$ large enough and fixed,
for every disk
\begin{equation}\label{Dcuk-}
D^{cu}(f^{-k^-}(p)):=h(B_{\rho_c^-}(x_c^-)\times B_{\rho_u^-}(v_u^-)\times \{0\}) \subseteq \mathscr{D}^{cu}_{k^-,k^+}(f^{-k^-}(p))
\end{equation}
with $\rho_c^->0$, $\rho_u^->0$ small enough,  we have that
$f^{k^++k^-}(D^{cu}(f^{-k^-}(p)))$ is $\delta/2$-close, in the $C^1$-topology,
to some disk of the form $h(B_{\rho_c^+}(x_c^+))\times B_{\rho_u^+}(0)\times\{0\})$ contained in $W^u(\Lambda)$,
for some $\rho_u^+>0,\rho_c^+>0$.
Denote by:
\begin{equation}\label{Dcuk+}
D^{cu}(f^{k^+}(p)):=f^{k^++k^-}(D^{cu}(f^{-k^-}(p))).
\end{equation}
We have that $\rho_u^+>0$ depends on $\rho_u^-,\rho_c^-$, but is independent of $k^-$ and $k^+$ provided they are large enough,
and $\rho_c^+>0$ depends on  $\rho_u^-,\rho_c^-, k^-, k^+$.
For $k^-,k^+$ fixed, the smaller $\rho_u^-,\rho_c^-$, the smaller $\rho_u^+>0,\rho_c^+>0$.


Here we should note that, while $D^{cu}(f^{-k^-}(p))$ is defined via the $h$-coordinates which are only $C^0$,
it is in fact contained in $W^u(\Lambda)$, so it is embedded in a  $C^1$-disk.
Hence we can measure   its distance away from $W^u(\Lambda)$ in terms of the  $C^1$-topology.
Also, note that   $D^{cu}(f^{k^+}(p))\pitchfork W^s(f^{k^+}(p^+))$. We derive the following:

\smallskip

\subparagraph{\em  Second transversality property.}
For $k^+$  sufficiently large and fixed, there exist   $\rho_u^+>0$ and $\rho_c^+>0$  such that for each $z_c^+\in B_{\rho_c^+}(x_c^+)$, $w_u^+\in B_{\rho_u^+}(0)$,
\begin{equation}\label{eqn:toptrans}
D^{cu}(f^{k^+}(p))\textrm { is topologically transverse to }
h(\{z_c^+\}\times \{w_u^+\}\times B_{\rho^s}(0)),
\end{equation}
where $\rho^s$ is defined in \eqref{eqn:balls}.

See \cite{GideaR03} for a definition of topological transversality.
Since the linearized coordinates $h$ are $C^0$,  the $n_s$-disks
$h\left(\{z_c^+\}\times \{w_u^+\}\times B_{\rho^s}(0)\right)$ in \eqref{eqn:toptrans}  are only $C^0$.
This is why we have to use the notion of topological transversality rather than the differentiable one.
Property \eqref{eqn:toptrans} holds true for the following reasons.
The $n_s$-disks $h\left(\{z_c^+\}\times \{w_u^+\}\times B_{\rho^s}(0)\right)$ depend in a $C^0$-fashion on $z_c^+$ and $w_u^+$.
For $z_c^+=x_c^+$ and $w_u^+=0$ the corresponding $n_s$-dimensional disk is a part of the stable fiber $W^s(f^{k^+}(p^+))$,
which is differentiably transverse to the $(n_c+n_u)$-dimensional disk $D^{cu}(f^{k^+}(p))$.
Differentiable transversality implies topological transversality, and topological transversality is $C^0$-stable.

Property \eqref{eqn:toptrans} implies that
\begin{equation}\label{eqn:projection_cu}
\pi_{c,u}(h^{-1}(D^{cu}(f^{k^+}(p)))\supseteq B_{{\rho}_c^+}( x_c^+ )\times B_{ {\rho}_u^+}(0),
\end{equation}
where $\pi_{c,u}$ is the projection
onto the $(c,u)$-subspace of $(E^u\oplus E^s)_{\Lambda}$
relative to the $h$-coordinate system.

Due to the compactness of $\Lambda$ and $\Gamma$,  $\rho^+_u$  can be chosen independently of $p\in\Gamma$ and of $k^-,k^+$,
provided they are large enough, but will depend on $\rho_u^-,\rho_c^-$, whereas $\rho^+_c$  can be chosen independently of $p\in\Gamma$,
but will depend on $\rho_u^-,\rho_c^-, k^-,k^+$.

\smallskip

\paragraph{\em Choice of $n^*$.}\label{sec:n^*}
Fix $\delta >0$, and let $n^*>0$ sufficiently large so that the conditions in Section \ref{sec:lambdalemma} hold.
We impose additional conditions on~$n^*$.

Since $\Gamma$ is compact we can choose $n^*>0$ such that for every $k^-\geq n^*$
and every $p\in\Gamma$, the $n_s$-dimensional compact disk $D^s(f^{-k^-}(p))$ given in \eqref{Dsk-}
always satisfies the transversality condition  \eqref{eqn:trans0}.
In other words, $k^-$ can be chosen uniformly with respect to $p\in \Gamma$.
This $n^*$ is the number that appears in the statement of Lemma  \ref{lem:key1-alternative}.

Fix such an $n^*$ depending on $\delta$, and which is independent of $p\in\Gamma$.

For a fixed choice of $p\in\Gamma$ and of $k^->n^*$, let  $D^{cu}(f^{-k^-}(p))$ be the disk attached to $f^{-k^-}(p)$ described in \eqref{Dcuk-},
for some $\rho_c^->0$, $\rho_u^->0$.
For every $k^+\geq n^*$, the $(k^-+k^+)$-th iterate of $D^{cu}(f^{-k^-}(p))$,
denoted by  $D^{cu}(f^{k^+}(p))$ in \eqref{Dcuk+}, satisfies \eqref{eqn:toptrans} and \eqref{eqn:projection_cu} for some
$\rho_u^+$, $\rho_c^+$.
The power $k^+$ can be chosen uniformly with respect to $p\in\Gamma$, and for  $k^+$ fixed, the parameters $\rho_u^+$, and $\rho_c^+$
depend on  $\rho_c^-$ and $\rho_u^-$.

It is also important to note that $k^+$, $\rho_u^+$, $\rho_c^+$ also depend  on the  angle of the intersection between
$W^u(\Lambda)$ and $W^s(\Lambda)$ at $p\in \Gamma$.
When the angle of intersection is small, the radii
$\rho_u^+$, $\rho_c^+$  need to  be chosen sufficiently small.
However, our argument   is only qualitative, and
making quantitative estimates   on the dependence of this product of disks  on the angle of intersection  is beyond the purpose of this paper.
Since $\Gamma$ is compact,  there exists a positive lower bound for the angle of intersection, and thus we can make the choices of $k^-,k^+$
uniform  for all points $p\in\Gamma$.

\smallskip

\paragraph{\em  Prototype windows}
\label{sec:prototype}
For $\delta>0$ fixed, choose and fix $n^*>0$ as in Section \ref{sec:lambdalemma}.
Consider a  point  $p$   in the homoclinic channel $\Gamma$.
For fixed $k^-, k^+\geq n^*$ consider a  pair of disks: the $n_s$-dimensional  $D^{s}(f^{-k^-}(p))$ as in \eqref{Dsk-} and the $(n_c+n_u)$-dimensional
$D^{cu}(f^{k^+}(p))$ as in \eqref{Dcuk+}.

We make the following claim:

\smallskip

\subparagraph{\em Claim on $m^*$.}
There exists $m^*\geq 0$ depending on the size of the disks $D^{s}(f^{-k^-}(p))$ and $D^{cu}(f^{k^+}(p))$,
such that for every $m\geq m^*$, and every $k'^-\geq n^*$,  if $p'\in\Gamma$ is such that
\[
p'^-=f^{k'^-+m}(p^+),
\]
then there exists a triplet of windows $W^-$, $W^+$, $W'^-$ with the following properties:
\begin{itemize}
    \item
    $W^-$ is contained in a $\delta/2$-neighborhood of $f^{-k^-}(p)$
    and therefore in a $\delta$-neighborhood of $f^{-k^-}(p^-)$;
    \item
    $W^+$ is contained in a $\delta$-neighborhood of $f^{k^+}(p^+)$;
    \item
    $ W'^-$ is contained in a $\delta/2$-neighborhood of $f^{-k'^-}(p')$
    and therefore in a $\delta$-neighborhood of $f^{-k'^-}(p'^-)$;
\item
$W^-$ is correctly aligned with $W^+$ under $f^{k^-+k^+}$;
\item
$W^+$ is correctly aligned with $W'^-$ under $f^{m-k^+}$;
\item
the sizes of the windows $W^-$, $W^+$, $W'^-$ do  not depend on  the points $p,p'\in\Gamma$;
the size of $W^+$ depends only on the size of $W^-$ and on $k^-,k^+$;  and  the size of $W'^-$ depends only on the size of $W^+$ and on $m$ and $k^+$.
\end{itemize}

In the above, $p^-$, $p^+$ satisfy $W^u(p^-)\cap W^s(p^+)\cap \Gamma=\{p\}$, and
$p'^-$, $p'^+$ satisfy $W^u(p'^-)\cap W^s(p'^+)\cap \Gamma=\{p'\}$.
We will refer to $W^-$, $W^+$, $W'^-$ as prototype windows as we will use
them in the next section to construct an infinite sequence of correctly aligned windows,
as described in the outline.

\smallskip

\subparagraph{\em  Construction of $W^-$.}
First we construct the window $W^-$  about $f^{-{k^-}}(p)=h(x_c^-,v_u^-, 0)$, where $ \|v_u^-\|<\delta_1$ (see \eqref{eqn:balls}).
Consider the $(n_s)$-dimensional disk $D^s(f^{-k^-}(p))$  through $f^{-k^-}(p)$ given in \eqref{Dsk-},
and the $(n_c+n_u)$-dimensional disk $D^{cu} (f^{k^+}(p))$ through $f^{k^+}(p)$ given in \eqref{Dcuk+}.

To the point $f^{-k^-}(p)$ we attach the $(n_c+n_u)$-dimensional disk\break
$D^{cu}(f^{-k^-}(p))= f^{-k^+-k^-}(D^{cu} (f^{k^+}(p)))$, of fixed size independent of $p$; see~\eqref{Dcuk+}.

Then choose  a $C^0$-family of $n_s$-dimensional disks $\mathscr{D}^s(q)$ of fixed size independent of $p$, with $q\in D^{cu}(f^{-k^-}(p))$, satisfying the following conditions:
\begin{itemize}
\item
for $q= f^{-k^-}(p)$ the corresponding disk $\mathscr{D}^s(q)= \mathscr{D}^s(f^{-k^-}(p))$ is contained in $D^s(f^{-k^-}(p))$;
\item
for each  $q\in D^{cu}(f^{-k^-}(p))$, we have
\[f^{k^++k^-}(\mathscr{D}^s(q))\subset h\left(\{z_c^+\}\times \{w_u^+\}\times B_{\rho^s}(0)\right)),\]
where $z_c^+\in B_{\rho_c^+}(x_c^+)$, $w_u^+\in B_{\rho_u^+}(0)$ are defined as in  \eqref{eqn:toptrans}.
\end{itemize}
Observe that, by construction, for each $q\in D^{cu}(f^{-k^-}(p))$, $\mathscr{D}^s(q)$
is topologically transverse to $ D^{cu}(f^{-k^-}(p))$.

Thus, the $(n_c+n_u)$-dimensional disk   $D^{cu}(f^{-k^-}(p))$  is contained in $W^u(\Lambda)$, the $n_s$-dimensional disk
 $D^s(f^{-k^-}(p))$  is $(\delta/2)$-close,
in the $C^1$-topology, to $W^s(f^{-k^-}(p^-))$, and each disk $f^{k^++k^-}(\mathscr{D}^s(q))$ is topologically transverse to $D^{cu}(f^{k^+}(p))$.

We define the window $W^- $ and its exit and entry sets $(W^-)^\xt$, $(W^-)^\nt$, respectively, by:
\begin{equation}
\label{widehat1}
\begin{split}
W^-=&\bigcup_{q\in D^{cu}(f^{-k^-}(p))}\mathscr{D}^s(q),\\
(W^-) ^\xt=&\bigcup_{q\in\partial D^{cu}(f^{-k^-}(p))}\mathscr{D}^s(q),\\
(W^-) ^\nt=&\bigcup_{q\in D^{cu}(f^{-k^-}(p))}\partial\mathscr{D}^s(q).
\end{split}
\end{equation}
Furthermore, we choose the sizes of  $D^{cu}(f^{-k^-}(p))$  and of $\mathscr{D}^s(q)$, for $q \in D^{cu} (f^{-k^-}(p))$,
such that
$W^-$ is contained in a $(\delta/2)$-neighborhood of $f^{-k^-}(p)$, hence every point in $W^-$ is $\delta$-close to $f^{-k^-}(p^-)$.

We note that $W^-$ is a window; see Remark \ref{rem:windows_alternative}.

In  Section \ref{sec:$W^+$} below we will impose additional conditions on the sizes of $D^{cu}(f^{-k^-}(p))$ and of $\mathscr{D}^s(q)$.

We take a forward iterate $f^{k^++k^-}(W^-)$ of $W^-$.
The point $f^{-k^-}(p)$ is mapped by $f^{k^++k^-}$ onto $f^{k^+}(p)$.
For $k^+\geq n^*$  we have  $f^{k^+}(p)\in V_\Lambda$.
The set $f^{k^++k^-}(W^-)$ is still a window, being a homeomorphic copy of $W^-$ under $f^{k^++k^-}$,
with the exit and entry sets being defined by transporting the  exit and entry sets of $W^-$,  respectively, through  $f^{k^++k^-}$.

In fact, by construction
\begin{equation}
\label{widehat1k+k-}
\begin{split}
f^{k^-+k^+}(W^-)=&\bigcup_{q\in D^{cu}(f^{-k^-}(p))} f^{k^-+k^+}(\mathscr{D}^s(q)).\\
\end{split}
\end{equation}

\smallskip

\subparagraph{\em  Construction of $W^+$.}\label{sec:$W^+$}
We define a new window $W^+\subseteq V_\Lambda$ about
$f^{k^+}(p^+)=h(x_c^+,0,0)$ such that
$f^{k^++k^-}(W^-)$ is correctly aligned with $W^+$ under the identity map,
or, equivalently, $W^-$ is correctly aligned with $W^+$ under $f^{k^++k^-}$.
This new window will be a product of disks  in the linearized coordinates $h$.
The construction follows below.

The image set
${D}^{cu}(f^{k^+}(p)):=f^{k^++k^-}(D^{cu}(f^{-k^-}(p)))$
is a $(n_c+n_u)$-dimensional disk through $f^{k^+}(p)$ that is $(\delta/2)$-close to  $W^u(\Lambda)$; see \eqref{Dcuk+}.
This disk is transverse to $W^s(f^{k^+}(p^+))$.
Also denote:
$$ D^s(f^{k^+}(q)):=f^{k^++k^-}(\mathscr{D}^{s}(q)), \ q\in
D^{cu}(f^{-k^-}(p)).$$

For a given choice of the size of $D^{cu}(f^{-k^-}(p))$ we require $\rho_c^+>0$, $\rho_u^+>0$ to
be sufficiently small,  so that  \eqref{eqn:toptrans} and \eqref{eqn:projection_cu} hold.
We also require that $\rho_c^+, \rho_u^+ <\delta_1$.

Then we choose $0< \rho_s^+<\delta_1$, and require that all disks $\mathscr{D}^s(q)$ are small enough so that
\begin{equation}\label{widehatinc3}
\pi_s \left[h^{-1}\left(D^s(f^{k^+}(q))\right)\right]
\subseteq \textrm{int}\left [B_{\rho_s^+}(0)\right],
\end{equation}
for all $q\in D^{cu}(f^{-k^-}(p))$.

For future reference, we also have to set a lower bound for the sizes of the disks
$\mathscr{D}^s(q)$, $q\in D^{cu}(f^{-k^-}(p))$.
There exist $\delta_2>0$ defined by the property
that
\begin{equation}\label{delta2}
\textrm{int}[\pi_s(h^{-1}(\mathscr{D}^s(q)))]\supseteq B_{\delta_2}(0),
\end{equation}
for all $q\in D^{cu}(f^{-k^-}(p))$.

We now define the second `prototype' window $W^+$ around $f^{k^+}(p^+)$
to be given in the $h$-coordinates by
\[\begin{split}
W^+ =&h[B_{\rho_c^+}(x_c^+)\times B_{\rho_u^+}(0)\times B_{\rho_s^+}(0)],\\
(W^+)^\xt   =&h\left[\partial B_{\rho^+_c}(x^+_c)\times B_{\rho^+_u}(0)\times B_{\rho^+_s}(0)\right.\\
&\left.\cup B_{\rho^+_c}(x^+_c)\times \partial B_{\rho^+_u}(0)\times B_{\rho^+_s}(0)\right],\\
(W^+)^\nt   =& h[B_{\rho^+_c}(x^+_c)\times B_{\rho^+_u}(0)\times \partial  B_{\rho^+_s}(0)].
\end{split}\]

By the product property of  correct alignment Lemma \ref{ex:product}, the choices that we made imply that $W^-$ is correctly aligned with $W^+$ under $f^{k^++k^-}$.

\smallskip

It is useful  at this point to summarize the inter-dependence of the parameters involved in the construction  of the windows $W^-$ and $W^+$
so that they are correctly aligned under $f^{k^-+k^+}$.
\begin{itemize}

\item
The quantities $\rho_c^+$, $ \rho_u^+$, $ \rho_s^+$ from above can be chosen independently of the point $p\in\Gamma$,
but they depend on $k^-,k^+$  on the sizes of the disks involved in the definition of the window $W^-$.
\item
The powers $k^-,k^+$ can be chosen arbitrarily large with  $k^-\geq n^*$, $ k^+\geq n^*$, where $n^*$ depends only on $\delta$ and not on $p\in\Gamma$.
\item
The disks $D^{cu}(f^{-k^-}(p))$ and $D^{cu}(f^{k^+}(p))$, and implicitly the parameters $\rho^+_c$, $\rho^+_u$, depend on   $k^-$, $ k^+$.
In particular, for fixed  $k^-$, $k^+$,   the  parameters $\rho_c^+$, $\rho_u^+$ depend  on  the size of the disk  $D^{cu}(f^{-k^-}(p))$;
the smaller the disk $D^{cu}(f^{-k^-}(p))$ is, the smaller $ \rho_c^+$,  $ \rho_u^+$  need to be chosen.
This is due to the coupling of the center and hyperbolic dynamics, which
mixes the center and unstable directions when iterated along homoclinic orbit.
That is, the center and unstable directions of a disk are not preserved when the disk is iterated along a homoclinic orbit, as they `get mixed',
therefore, the image of a center-unstable rectangle iterated along the stable manifold of a point
does not remain a rectangle anymore, as the rectangle `gets distorted'.

\item
The disks  $\mathscr{D}^{s}(q)$, $q\in D^{cu}(f^{-k^-}(p))$, and implicitly the parameter $\delta_2$ in \eqref{delta2},
depend on $k^-$, $k^+$.
The sizes of these disks can be chosen independently of the size of the disk $D^{cu}(f^{-k^-}(p))$, provided this is sufficiently small.
That is,  if $D^{cu}(f^{-k^-}(p))$ is replaced by a smaller disk
$\tilde{D}^{cu}(f^{-k^-}(p))\subset D^{cu}(f^{-k^-}(p))$,
then we simply restrict the family of disks $\mathscr{D}^{s}(q)$ to those $q\in \tilde{D}^{cu}(f^{-k^-}(p))$,
without having to modify the size of the disks $\mathscr{D}^{s}(q)$.

\item
The parameter $\rho_s^+$ can be chosen independently of $k^-$, $k^+$, provided that   the disks
$\mathscr{D}^{s}(f^{-k^-}(q))$, $q\in D^{cu}(f^{-k^-}(p))$, are chosen  small enough,
\end {itemize}

\smallskip

\subparagraph{\em  Choice of $m^*$.}\label{sec:Choice of $m^*$}
Now we need to show that there exists  a number $m^*$ with the property that for every $m\geq m^*$ and every $k'^-\geq n^*$, and for every point $p'\in\Gamma$ with
$p'^-=f^{m+k'^-}(p^+)$, we can construct a window $W'^-$ near
$f^{-k'^-}(p')$
in a similar way in which  we  have constructed $W^-$,
such that $W^+$ is correctly aligned with $W'^-$ under $f^{m-k^+}$.

Since the power $m-k^+$ should be non-negative, we first require $m^*\geq k^+$. We also fix $k'^+=k^+ \geq n^*$.

A key observation is that, since $W^+$  is a window of product type relative to the $h$-coordinates,
the image $f^{m'}(W^+)$  is also a window of product type relative to the $h$-coordinates,  for any iterate $f^{m'}$,
provided that $f^{k}(W^+)$ remains in the domain $V_{\Lambda}$  of the map $h$
for $0\le k\le m'$.
This is due to the fact that, relative to the linearized coordinates, the map $f$ is
conjugate to $Nf$ (see Apendix \ref{sec:linearized_coordinates}).

Even in  the case when $f^{m'}(W^+)$ does not entirely remain in $V_{\Lambda}$ (e.g., it   `escapes' in the unstable directions),
 $f^{m'}(W^+)\cap V_{\Lambda}$ contains a sub-window of product type, say $\tilde W$.
If this window $\tilde W$ is correctly aligned with $W'^-$  under the identity map, it immediately  follows that  $f^{m'}(W^+)$
itself is correctly aligned with  $W'^-$.
So for all practical purposes we can assume  that $f^{m'}(W^+)$ stays in $V_{\Lambda}$.

We now take $\delta_1$ from \eqref{eqn:balls} and $\delta_2$ from \eqref{delta2}.
By \eqref{eqn:expcontr}, there exists $m^*\geq k^+$ large enough so that for $m'  \geq m^*-k^+$,
$f^{m'}(h(\{ x_c^+\}\times B_{\rho_u^+}(0)\times\{0\}))$ contains a disk in
$W^u(h(\{f^{m'}(x_c^+)\}\times \{0\}\times\{0\}))$ of radius $\delta_1$
relative to the $h$-coordinates, that is:
\begin{equation}\label{nstar1}
\textrm{int}[f^{m'}(h(\{x_c^+\}\times B_{ \rho_u^+}(0)\times\{0\}))]\supseteq h\left(\{ f^{m'}(x_c^+) \}\times B_{\delta_1}(0)\times\{0\}\right),
\end{equation}
and $f^{m'}(h(\{x_c^+\}\times \{0\}\times B_{\rho_s^+}(0)))$ is contained in a disk in
$W^s(h(\{f^{m'}(x_c^+)\}\times \{0\}\times\{0\}))$ of radius $\delta_2$, that is:

\begin{equation}\label{nstar2}
f^{m'}\left[h(\{ x_c^+ \}\times \{0\}\times B_{\rho_s^+}(0))\right]\subseteq
\textrm{int}[h(\{ f^{m'}(x_c^+) \}\times \{0\}\times B_{\delta_2}(0))],
\end{equation}
Observe that the parameter  $\delta_2$ in \eqref{delta2}
depends on $k'^-$ and $k'^+=k^+$.

Fix $m^*$ with these properties.
Note that $m^*$ depends, in particular, on the size of the unstable component $B_{\rho_u^+}(0)$ of the previous window $W^+$,
which in turn depends on the size of the disk $D^{cu}(f^{-k^-}(p))$ that is used in the construction of the first window $W^-$;
the smaller the radius  $\rho_u^+$ is, the larger $m^*$ needs to be chosen in order to satisfy~\eqref{nstar1}.

\smallskip

\subparagraph{\em Construction of $W'^-$.}
 Let $m\geq m^*$ and let $m'=m-k^+$.
 Assume that $p'\in\Gamma$ is such that $p'^-=f^{m+k'^-}(p^+)$.
We construct a third window $W'^-$ near  $f^{-k'^-}(p')$, in a similar way to the construction of $W^-$, such that  $W^+$ is correctly aligned under
$f^{m'}$ with  $W'^-$.

Consider the point $f^{-k'^-}(p')\in W^u(f^{-k'^-}(p'^-))$.

Choose a sufficiently small $(n_c+n_u)$-dimensional disk
$ \tilde{D}^{cu}(f^{-k'^-}(p'))\subseteq
{\mathscr{D}}^{cu}_{k'^-,k'^+}(f^{-k'^-}(p'))$ in
$W^u(\Lambda)$ such that it  contains the point
$f^{-k'^-}(p')$ and it satisfies   the following condition:
\begin{equation}\label{widehatprimecond1}
\pi_{c,u}[h^{-1}(\tilde{D}^{cu}(f^{-k'^-}(p')))]\subseteq
\textrm{int}[h^{-1}\circ f^{m'}\circ h(B_{\rho_c^+}(x_c^+)\times B_{\rho_u^+}(0)\times\{0\})].
\end{equation}

The size of the disk
$\tilde{D}^{cu}(f^{-k'^-}(p'))$ can be chosen to depend only on the window $W^+$, on $m'$ and
$\delta_1$ in  \eqref{eqn:balls}, and
independently of the point $p'\in\Gamma$.

Then we choose a $C^0$-family of $n_s$-dimensional disks $\tilde{\mathscr{D}}^s(q')$,
with $q'\in \tilde{D}^{cu}(f^{-k'^-}(p'))$,  such that for $\delta_2$ in \eqref{delta2}
\begin{equation}\label{delta2tilde}
\textrm{int}[\pi_s(h^{-1}(\tilde{\mathscr{D}}^s(q')))]\supseteq B_{\delta_2}(0),
\end{equation}
for all $q'\in \tilde{D}^{cu}(f^{-k'^-}(p'))$, and when $q'=f^{-k'^-}(p')$,
\[
\tilde{\mathscr{D}}^s(f^{-k'^-}(p'))\subset W^s(f^{-k'^-}(p'^+)).
\]

As we pointed out earlier, the parameter $\delta_2$ is independent
of  the choice of  the disk $\tilde{D}^{cu}(f^{-k'^-}(p'))$, provided this is sufficiently small, and only depends on $k'^-$ and $k'^+=k^+$.
For fixed $k'^-$, $k'^+$, and $\delta_2$ sufficiently small,    a family of disks $\tilde{\mathscr{D}}^s(q')$ satisfying \eqref{delta2tilde} can always
be constructed.

Conditions \eqref{nstar2} and \eqref{delta2tilde}  imply that the projection of  each  $\tilde{\mathscr{D}}^{s}(q')$,
for $q'\in \tilde{D}^{cu}(f^{-k'^-}(p'))$,
onto the stable coordinates contains  the stable component of $f^{m'}(W^+)$ inside it, that is
\begin{equation}\label{widehatprimecond2}
\textrm{int}[\pi_s(h^{-1}(\tilde{\mathscr{D}}^s(q')))]\supseteq  h^{-1}\circ f^{m'}\circ h(\{z_c^+\}\times\{0\}\times B_{\rho_s^+}(0)),
\end{equation}
for all $z_c^+\in  B_{\rho_c^+}(x_c^+)$.

The window $W'^-$ is then defined similarly to $W^-$, by
\begin{equation}
\label{widehat}
\begin{split}
W'^-=&\bigcup_{q'\in\tilde{D}^{cu}(f^{-k'^-}(p'))}\tilde{\mathscr{D}}^s(q'),\\
{W'} ^\xt=&\bigcup_{q'\in\partial\tilde{D}^{cu}(f^{-k'^-}(p'))}\tilde{\mathscr{D}}^s(q'),\\
{W'} ^\nt=&\bigcup_{q'\in\tilde{D}^{cu}(f^{-k'^-}(p'))}\partial\tilde{\mathscr{D}}^s(q').
\end{split}
\end{equation}

Conditions \eqref{widehatprimecond1} and \eqref{widehatprimecond2} imply that the  product property of  correct alignment
applies -- Lemma \ref{ex:product} --, and hence we obtain that $W^+$ is correctly aligned under $f^{m'}$ with  $W'^-$.
An important  point to keep in mind is that we have  no  control on the size of the $(n_c+n_u)$-dimensional disk
$\tilde{D}^{cu}(f^{-k'^-}(p'))$ involved in the construction
$W'^-$.
We choose this disk so that its center-unstable part is contained in the center-unstable component of $f^{m'}({W}^+)$.
Thus, the size of the disk $\tilde{D}^{cu}(f^{-k'^-}(p'))$ utilized in the construction
$W'^-$ may be
smaller than the size of the disk $ {D}^{cu}(f^{-k'^-}(p))$ utilized in the construction of $W^-$.

A schematic representation of the construction of the triplet of windows $W^-, {W}^+,  W'^-$ constructed so far is shown
in Figure \ref{fig:shadowing-pseudo}.
\begin{figure}
\centering
\includegraphics[width=0.95\textwidth]{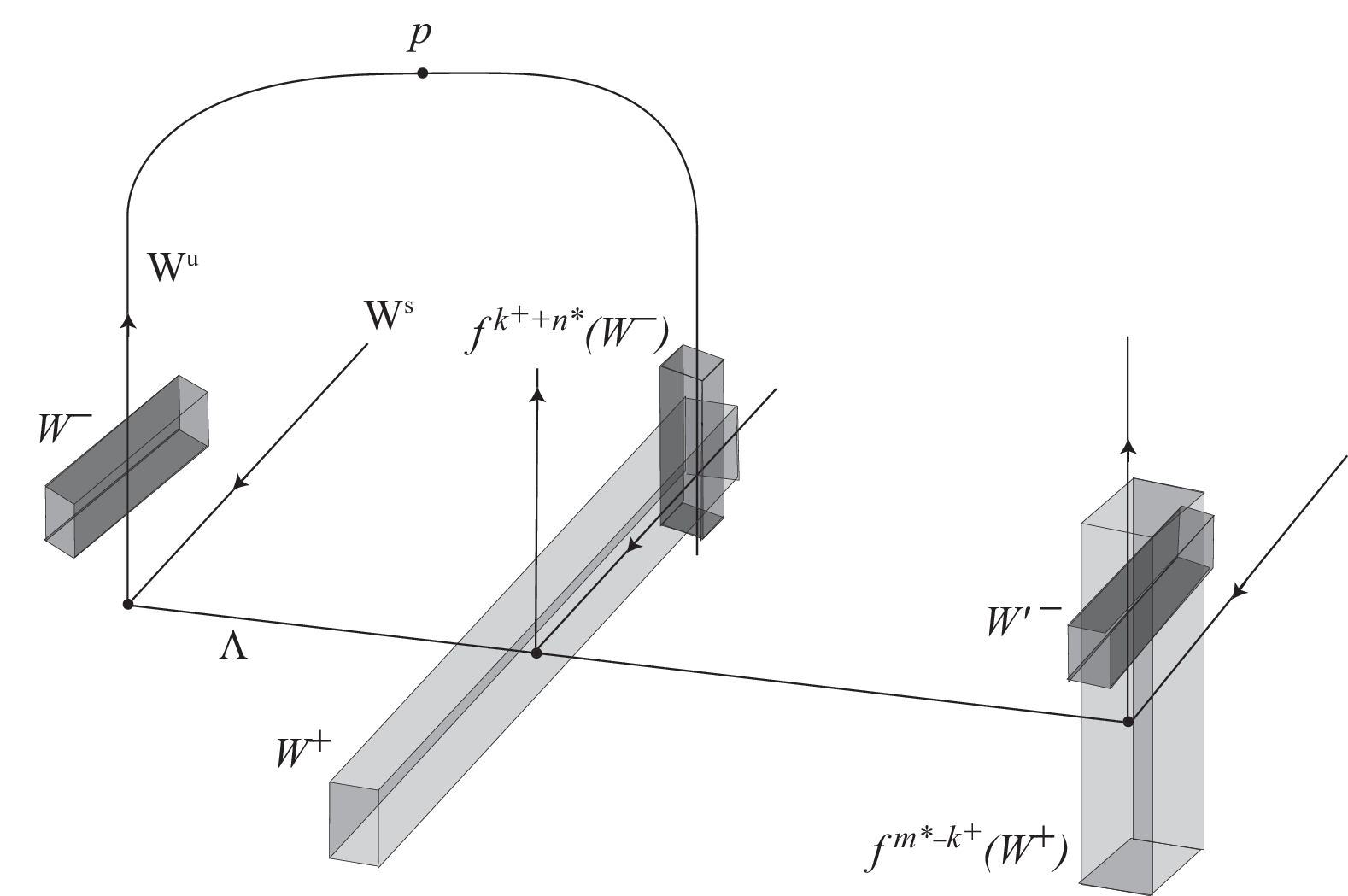}
\caption{Construction of windows.}
\label{fig:shadowing-pseudo}
\end{figure}

\smallskip

We anticipate that, in order to continue this construction of triplets of correctly aligned windows starting from $W'^-$,
the number of iterates ${m'}^*$ that we need to choose at the next step, in order to satisfy \eqref{nstar1}, may need to be   larger than $m^*$.
Without further conditions on the dynamics, we cannot guarantee a uniform choice of $m^*$ to work for all steps of the construction.
In Appendix \ref{Turaev} we show an example, which was kindly communicated to us by Dmitry Turaev, showing that a `uniform' version of
this shadowing lemma is not true in general.

\smallskip

\paragraph{\em  Definition of $m^*_i$}\label{sec:mi*}
\label{m_i_star}
Let $p_0\in\Gamma$ be an arbitrary homoclinic point, and let $n_0\geq n_*$. We construct a pair of
windows $W^-_0$ in a $\delta/2$-neighborhood of $f^{-n_0}(p_0)$, and $W^+_0$ in a $\delta$-neighborhood of $f^{k^+}(p^+_0)$,
where $k^+\geq n^*$ is fixed, such that  $W^-_0$  is correctly aligned with  $W^+_0$ under $f^{n_0+k^+}$.
Then, the procedure in   Section \ref{sec:Choice of $m^*$} provides an $m_0^*=m^*_0(n_0)$ that depends on $W^-_0$ and $W^+_0$,
and implicitly on $n_0$, and is independent of $p_0\in\Gamma$.
Inductively, if $m^*_0,\ldots m^*_{i-1}$ have been defined, let
\begin{equation}
\begin{split}
&n_0\geq n^*,\ldots,n_{i-1}\geq n^*, n_i\geq n^*,\\
&m_0\geq m^*_0(n_0),\\
&\cdots \\ &m_{i-1}\geq m^*_{i-1}(n_0,\ldots, n_{i-1},m_0,\ldots,m_{i-2}).
\end{split}
\end{equation}

Let \[p_0,\ldots,p_{i}\in \Gamma,\]
such that $f^{n_j+m_{j-1}}(p^+_{j-1})=p^-_{j}$ for $j=1,\ldots,i$.

Let
\[
W^-_0,  W^+_0, W^-_1,\ldots,  W^-_i,  W^+_i,
\]
be a sequence of correctly aligned windows, constructed as above, such that $W^-_{j-1}$ in a $\delta/2$-neighborhood of
$f^{-n_{j-1}}(p_{j-1})$,  $W^+_{j-1}$ in a $\delta$-neighborhood of $f^{k^+}(p^+_{j-1})$,
$W^-_{j-1}$  is correctly aligned with  $W^+_{j-1}$ under $f^{n_{j-1}+k^+}$,
and  $W^+_{j-1}$ is correctly aligned with $W^-_{j}$ under $f^{m_{j-1}-k^+}$, for $j=1,\ldots,i$.

Then the procedure in Section  \ref{sec:Choice of $m^*$} provides an
\[
m_i^*=m^*_i(n_0,\ldots,n_{i-1},n_i,m_0,\ldots,m_{i-1})
\]
as in the statement of Lemma \ref{lem:key1-alternative}, that depends on $W^-_0,\ldots,W^+_i$,
and implicitly on $n_0,\ldots,n_{i-1},n_i$, $m_0,\ldots,m_{i-1}$, but is independent of $p_0,\ldots,p_{i-1}\in\Gamma$.

\smallskip

\paragraph{\em  Construction of an infinite  sequence of correctly aligned windows}
\label{sec:infinite_sequence}{$ $}

Take a pseudo-orbit $\{y_i\}_{i\geq 0}$ as in the statement of Lemma \ref{lem:key1-alternative}.
We implicitly assume that $f^{n_i}(y_i)$ is in the domain $\Omega^{-}(\Gamma)$ of  $\sigma$, and hence
$\sigma\circ f^{n_i}(y_i)$ is in the range $\Omega^{+}(\Gamma)$ of $\sigma$.
Thus $W^u(f^{n_i}(y_i))\cap W^s(\sigma(f^{n_i}(y_i))\cap\Gamma=\{p_i\}$ for some uniquely defined homoclinic point $p_i\in\Gamma$.
Fix $k^+\geq n^*$.

Starting with the homoclinic point $p_0$ we construct inductively an infinite sequence of correctly aligned windows along the pseudo-orbit,
\[
W^-_0,W^+_0,  W^-_1, \ldots,  W^-_i,W^+_i,  W^-_{i+1}, W^+_{i+1},\ldots,
\]
such that for each $i\geq 0$ we have
(choosing $p_i^-=f^{n_i}(y_i)$, $p_i^+=\sigma(f^{n_i}(y_i))$, $k_i^-=n_i$, and $k^+$ fixed)
\begin{itemize}
 \item $W^-_{i}$ lies within a $\delta$-neighborhood of $y_{i}$;
 \item $W^+_{i}$  lies within a $\delta$-neighborhood of $f^{k^+}\circ \sigma\circ f^{n_{i}}(y_{i})$, where $n_i\geq n^*$;
 \item $W^-_{i}$ is correctly aligned with $W^+_{i}$ under $f^{k^++n_{i}}$,
 \item  $W^-_{i+1}$ lies within a $\delta$-neighborhood of $y_{i+1}=f^{m_i}\circ\sigma\circ f^{n_i}(y_i)$;
 \item $W^+_{i}$ is
correctly aligned with $W^-_{i+1}$ under  $f^{m_{i}-k^+}$, where $m_i\geq m^*_i$.
\end{itemize}

The shadowing property of correctly aligned windows  -- Theorem \ref{theorem:detorb} -- implies
that there exists a point $z_0\in W^-_{0}$ whose forward orbit  visits all windows in the prescribed order. In particular, the orbit points given by
$z_{i+1}=f^{m_i+n_i}(z_i)$, satisfy $z_i\in W^-_i$ for all $i\geq 0$. Since each $W^-_i$ is
contained inside a  $\delta$-neighborhood of $y_i$, it follows that $d(z_i,y_i)<\delta$ for all $i\geq 0$.
\qed

\subsubsection{A proof of  Lemma~\ref{lem:key1-alternative} using the obstruction property}\label{proofshadowinglemma2}
In this section we give an alternative proof of Theorem \ref{lem:key1-alternative}.

\smallskip
\paragraph{\em Outline}
The proof  is based on the construction of a nested sequence of  closed balls $B_{i+1}\subset B_i$ in a neighborhood of the
first point of the pseudo-orbit $y_0$, such that taking $z_0 \in B_k=\bigcap _{0\le i\le k}B_i$
one has that
$z_0\in B_\delta (y_{0})$ and  $z_{i+1}=f^{m_i+n_i}(z_0)\in B_\delta (y_{i+1})$ for $i=0,1\dots, k$, for any $k\in \N$.

Moreover, taking $z_0 \in B_\infty=\bigcap _{ i\ge 0 }B_i \ne \emptyset$,
one has that  $z_{i+1} \in B_\delta (y_{i+1})$ for    any $i\in \N$.

The argument will be done by induction.

We will define the value of $n^*$, $m^*$ at every step of the induction process. We will see that $n^*$ can be taken once and for all but
$m^*$ will depend on the previous choices, but is independent  of the given sequence $y_i$.

\smallskip

\paragraph{\em Choice of $n^*$ and $m^*$}
Consider the homoclinic channel  $\Gamma$ and the corresponding scattering map
$\sigma:\Omega^{-}(\Gamma)\to \Omega^{+}(\Gamma)$.
We will choose $\delta>0$ and consider $V_\Lambda$ and $V_\Gamma$ contained in neighborhoods  of size $\delta$ of the compact manifolds $\Lambda$ and
$\Gamma$, respectively.

We define $n^*=n^*(\delta)$ the  same number as in subsection \ref{sec:lambdalemma}.
In particular, given any point $p \in \Gamma$,
for any $n\in \N$  with $n\ge n^*$,  one has that $f^{\pm n}(p) \in V_\Lambda$.
Moreover, this property also holds for points in $W^{u,s}(\Lambda)\cap V_\Gamma$ when iterating them backwards or forward respectively.

Moreover, we will modify $n^*$ to have the following additional property.
Assume we have $p\in \Gamma$ and let $p^-,p^+ \in \Lambda$ be the unique points for which $W^u(p^-)\cap W^s(p^+) \cap \Gamma =\{p\}$.
\begin{enumerate}
 \item
Let a point $x\in W^s(f^{-k^-}(p^-))$ and $B\subset B_\delta (f^{-k^-}(p^-))$ be any ball centered at $x$ of fixed radius $\rho >0$  small enough. Then we have that
$$
B\subset V_\Lambda, \ \ x\in  B\cap W^s(f^{-k^-}(p^-)) \ne \emptyset .
$$
As $W^s(p^+)$ intersects tramsversaly $W^u(\Lambda)$ at the homoclinic point $p$,  by the Lambda Lemma
there exists a point $\bar x \in  W^s(p^+)\cap V_\Gamma$ such that $f^{-k^-}(\bar x) \in B$ if $k^- > n^*$.
The value of $n_* $ depends on $\rho$, which is fixed once for all, and also on the angle of intersection of the stable and unstable
manifolds of $\Lambda$ along $\Gamma$ which, by the hypothesis of compactness, is bounded bellow by a fixed quantity.
\item
By continuity, there exists a ball $V\subset V_\Gamma$  centered at  $\bar x$ such that $f^{-k^-}(\bar x) \in f^{-k^-}(V) \subset B$.
\end{enumerate}

The value of $n^*$ will be fixed from now on. Now we explain how we choose $m^*$ at every step of the process.

Assume  that we also have $p'\in \Gamma$ and  $p'^-$, $p'^+$ with the same properties as
$p$ and  $p^-$, $p^+$, and such that $f^{m+k'^-}(p^+)= p'^-$. Equivalently
\begin{equation}\label{eq:nexthomoclinic}
p^+= f^{-(k'^-+m)}(p'^-)
\end{equation}
Take the point $\bar x \in W^s(p^+)$ and the ball $\bar x \in V \subset V_\Gamma$  centered at $\bar x$ .
Then choose $k^+ \ge n^*$. The value of $k^+$ will be fixed along the process.
\begin{enumerate}
 \item
We know that
$f^{k^+}(\bar x) \in B_\delta (f^{k^+}(p^+))\cap V_\Lambda\cap W^s(f^{k^+}(p^+))$,  and there exists a ball $U$ centered at $f^{k^+}(\bar x) $ such that:
\begin{equation*}\begin{split}
U\subset B_\delta (f^{k^+}(p^+))\subset V_\Lambda, \\ f^{k^+}(\bar x) \in U\cap W^s(f^{k^+}(p^+)) \ne \emptyset ,\\  f^{-k^+}(U)\subset V.
\end{split}\end{equation*}
\item
As, by \eqref{eq:nexthomoclinic}, $f^{k^+}(p^+)= f^{-(k'^-+m-k^+)}(p'^-))$, the ball $U$ satisfies
$$
f^{k^+}(\bar x)\in U\cap W^s(f^{-(k'^-+m-k^+)}(p'^-))) \ne \emptyset.
$$
\item
Now we apply the Lambda Lemma to $U$;
we know that $W^s(p'^+)$ intersects transversally $W^u(\Lambda)$ at $p'$, and therefore, if $k'^-+m-k^+>m^*$ big enough (depending of the size of $U$),
there exists $ \bar x' \in  W^s(p'^+)$ such that:
 $f^{-(k'^-+m-k^+)}( \bar  x')\in U$.
\item
By continuity, there exists a ball centered at  $ \bar x'\in  V'\subset V_\Gamma$, such that
${f^{-(k'^-+m-k^+)}}(V')\subset U$.
\end{enumerate}
Summarizing: Given  a point $x\in W^s(f^{-k^-}(p^-))$ and a ball $B$  centered at $x$ of fixed radius $\rho >0$ small enough  with the property that
\begin{equation*}\begin{split}
B\subset B_\delta (f^{k^-}(p^-))\subset V_\Lambda, \\ x\in B\cap W^s(f^{-k^-}(p^-)) \ne \emptyset ,
\end{split}\end{equation*}
we have produced:
\begin{enumerate}
 \item
For $k^- \ge n^*$, a ball $V \subset V_\Gamma$, centered at a point  $\bar x \in  W^s(p^+)\cap V_\Gamma$ such that  $f^{-k^-}(V) \subset B$.
 \item
For $k^+ \ge n^*$, and fixed,  a ball $U\subset B_\delta (f^{k^+}(p^+))\subset V_\Lambda$ centered at the point  $f^{k^+}(\bar x) \in  W^s(f^{k^+}(p^+))\cap U $
such that $f^{-k^+}(U)\subset V$.
\item
For $k'^-+m-k^+ \ge m^*$, a ball  $V'\subset V_\Gamma $, centered at a point  $\bar x' \in  W^s(p'^+)\cap V_\Gamma$ such that
${f^{-(k'^-+m-k^+)}}( V')\subset U$.
\item
Moreover, as $k'^- \ge n^*$ we can also ensure  $f^{-k'^-}(V')\subset B_\delta (f^{k'^-}(p'^-))$.
\end{enumerate}
The values of $k^+, k^-, k'^-$ are taken bigger than $n^*$, which is already fixed, but the value of $m^*$ depends on the size of $U$ and $m^* >n^*$,
but it is independent of the points $p, p', p^\pm, (p')^\pm$.
As the balls  $U$, $V$ will decrease in size during the induction process, the value of $m^*$ will incresase depending
of the previous iterates.

\smallskip

\paragraph{\em Inductive construction}
Now we begin the construction of the shadowing orbit $\{z_i\}$ once the pseudo-orbit $\{y_i\}$ is given.
The required values of $n^*$, $k^+$, are fixed (one can use, for instance, $k^+=n^*$) and $m^*_i$ does not
depend of the given pseudo-orbit, but only on the numbers $n_i,m_j$.

The first step in the induction procedure is done separately because it requires a slightly different reasoning.
In this first step, $p^-= f^{n_0}(y_0)$, $p^+=\sigma (f^{n_0}(y_0))$, and $k^-=n_0$.

\begin{enumerate}
\item Choose  $x_0 \in W^s(y_0)$ and $B_0$ be any ball centered at $x_0$ of fixed radius $\rho>0$ such that
\begin{equation*}\begin{split}
B_0\subset B_\delta (y_0)\subset V_\Lambda, \\ x_0 \in B_0\cap W^s(y_0) \ne \emptyset .
\end{split}\end{equation*}
As $W^u(\Lambda)\pitchfork  W^s(\sigma(f^{n_0}(y_0)))$ at an homoclinic point that we call $p_0$,  by the Lambda Lemma
there exists a point $\bar x_0 \in  W^s(\sigma(f^{n_0}(y_0)))\cap V_\Gamma$ such that $f^{-n_0}(\bar x_0) \in B_0$ if $n_0 \ge n^*$.

\item
By continuity, there exists a ball $V_0\subset V_\Gamma$ centered at $\bar x_0$ such that
\begin{equation}\label{V0}
f^{-n_0}(V_0) \subset B_0\subset B_\delta (y_0)\subset V_\Lambda.
\end{equation}
\end{enumerate}

Now we proceed with the second step of the induction procedure:
\begin{enumerate}
\item
 By the definition of $n^*$, as $\bar x_0 \in  W^s(\sigma(f^{n_0}(y_0)))\cap V_\Gamma$,  as $k^+\ge n^*$, we know that
\[
f^{k^+}(\bar x_0) \in  W^s(f^{k^+}(\sigma(f^{n_0}(y_0))))\cap B_\delta (f^{k^+}(\sigma(f^{n_0}(y_0))))\subset V_\Lambda.
\]
\item
By continuity, there is a ball $U_1$ centered at $f^{k^+}(\bar x_0)$ such that:
\begin{equation}\label{U1V0}\begin{split}
U_1 \subset B_\delta (f^{k^+}(\sigma (f^{n_0}(y_0))))\subset V_\Lambda,\\
f^{k^+}(\bar x_0)\in U_1 \cap W^s(f^{k^+}(\sigma(f^{n_0}(y_0)))),\\
f^{-k^+}(U_1)\subset V_0.
\end{split}\end{equation}
\item
Recall that $y_1= f^{m_0}(\sigma (f^{n_0}(y_0)))$, and therefore
$f^{k^+}(\sigma (f^{n_0}(y_0))) = f^{k^+-m_0}(y_1)$.
\item
The next step is the application of the Lambda Lemma. Now $p'^-= f^{n_1}(y_1)$, $p'^+= \sigma (f^{n_1}(y_1))$ and $k'^-=n_1$.
As $W^u(\Lambda)$ intersects transversally $W^s(\sigma(f^{n_1}(y_1)))$ at an homoclinic point that we will call $p_1$, if we take $n_1 >k^+\ge n^*$ and
$m_0>m_0^*$, where $m_0^*$ is the value $m^*$ given in the general step and depends
on the size of $U_1$ and therefore on $n_0$, one has that $n_1+m_0-k^+ > k^++m_0-k^+=m_0 >m_0^*$ and  there exists
$x_1 \in W^s(\sigma(f^{n_1}(y_1)))$ and a ball $ V_1$ centered at $x_1$ such that:
\begin{eqnarray}
f^{-{n_1}}(x_1)\in f^{-n_1}(V_1)&\subset& B_\delta(y_1), \label{V1subset} \\
f^{-(n_1+m_0-k^+)}(x_1) \in f^{-(n_1+m_0-k^+)}(V_1)&\subset& U_1. \label{V1U1}
\end{eqnarray}
\item
If we now take
$B_1 = f^{-(n_0+n_1+m_0)}(V_1)$, we have, using \eqref{V1U1}, \eqref{U1V0}, \eqref{V0},  that:
\begin{equation}\label{B1B0}
\begin{split}
B_1 &= f^{-(n_0+n_1+m_0)}(V_1) = f^{-n_0-k^+}\circ f^{-n_1-m_0+k^+}(V_1) \\&\subset   f^{-n_0-k^+}(U_1) \subset  f^{-n_0}(V_0) \subset B_0.
\end{split}
\end{equation}
Moreover,  if we take $z_0 \in B_1$ it satisfies, by \eqref{B1B0} and \eqref{V1subset} and using that $B_0\subset B_\delta (y_0)$:
\begin{eqnarray*}
z_0 &\in & B_\delta(y_0),\\
f^{n_0+m_0}(z_0) &\in& f^{-n_1}(V_1)\subset B_{\delta}(y_1).
\end{eqnarray*}

\end{enumerate}

Once we have done the two first steps, we can proceed with the general induction step.

Assume we have built the sequence $\bar x_i\in W^s(\sigma (f^{n_i}(y_i)))\cap V_\Gamma$, a ball $V_{i}\subset V_\Lambda$  centered at $\bar x_i$, $i=0,\dots j$
and $U_{i+1}$ a ball centered at $f^{k^+}(\bar x_i)$, for $n_i >k^+\ge n^*$ and $m_i \ge  m_i^*$, for $i=0,\ldots ,j$, with the properties:
\begin{itemize}
 \item
 $f^{-n_{i}}(V_{i})\subset B_\delta(y_{i})$,
 \item
 $f^{-(n_{i}+m_{i-1}-k^+)}(V_{i})\subset U_{i}$,
\item $U_{i+1} \subset B_\delta (f^{k^+}(\sigma (f^{n_i}(y_i))))$, \item
$f^{k^+}(\bar x_i)\in U_{i+1}$, \item $f^{-k^+}(U_{i+1})\subset V_i$.

\end{itemize}

We also assume that we have $\bar x_{j+1}\in V_{j+1}\cap W^s(\sigma (f^{n_{j+1}}(y_{j+1})))$, such that
\begin{itemize}
 \item
 $f^{-n_{j+1}}(V_{j+1})\subset B_\delta(y_{j+1})$,
 \item
 $f^{-(n_{j+1}+m_{j}-k^+)}(V_{j+1})\subset U_{j+1}$.

  \end{itemize}

Let
 $$
 B_{j+1}= f^{-n_{j+1}}\circ f^{-\sum _{i=0}^{j}m_i+n_i} (V_{j+1}),
 $$
 and we have that $B_{j+1} \subset B_{j}\subset B_{j-1}\subset\dots\subset B_0$.

 To proceed, first we look for a ball $U_{j+2}$ centered at $f^{k^+}(\bar x_{j+1})$ such that:
\begin{eqnarray*}
&&U_{j+2} \subset B_\delta (f^{k^+}(\sigma (f^{n_{j+1}}(y_{j+1})))),\\
&&f^{k^+}(\bar x_{j+1})\in U_{j+2}\cap W^s(f^{k^+}\sigma (f^{n_{j+1}}(y_{j+1}))), \\
 &&f^{-k^+}(U_{j+2})\subset V_{j+1}.
\end{eqnarray*}
The value of $k^+$ and $n^*$ are fixed, but the size of $U_{j+2}$ depends on the size of $V_{j+1}$ and therefore on the previous steps.
Then, applying the Lambda Lemma, using that $y_{j+2} = f^{m_{j+1}}(\sigma (f^{n_{j+1}}(y_{j+1}))$,
$W^u(\Lambda))\pitchfork W^s(\sigma(f^{n_{j+2}}(y_{j+2})))$
at a point
$p_{j+2}$,
we will find $\bar x_{j+2}\in W^s(\sigma(f^{n_{j+2}}(y_{j+2})))$
and a ball $V_{j+2}\subset V_\Gamma$ centered at  $\bar x_{j+2}$ such that, if $n_{j+2} >k^+\ge n^*$, and $m_{j+1}>m^*_{j+1}$, then
\begin{itemize}
 \item
 $f^{-n_{j+2}}(V_{j+2})\subset B_\delta(y_{j+2})$,
 \item
 $f^{-(n_{j+2}+m_{j+1}-k^+)}(V_{j+2})\subset U_{j+2}$.
\end{itemize}

Observe that the value $m^*_{j+1}$ is the
general value $m^*$ that now depends on the size of $U_{j+2}$, and therefore of all the previous steps.

Finally, define
$$
B_{j+2}= f^{-\sum _{k=0}^{j}m_k+n_k}\circ f^{-n_{j+2}-m_{j+1}-n_{j+1}}  (V_{j+2}).
$$

Then, we have:
\begin{equation}
\begin{split}
f^{-n_{j+2}-m_{j+1}-n_{j+1}}  (V_{j+2})&=f^{-n_{j+1}-n^*}\circ f^{-n_{j+2}-m_{j+1}+n^*}(V_{j+2})\\
&\subset   f^{-n_{j+1}-n^*}(U_{j+2}) \subset  f^{-n_{j+1}}(V_{j+1}).
\end{split}
\end{equation}

Therefore
$$
B_{j+2}\subset  f^{-\sum _{k=0}^{j}m_k+n_k}\circ f^{-n_{j+1}}  (V_{j+1})=B_{j+1}.
$$

This finishes the induction procedure.
Observe that if $z_0 \in \bigcap _{0\le i\le j}B_j$ and we consider the orbit $z_{i+1}=f^{m_i+m_i}(z_i)$ we have that:
\begin{itemize}
 \item
 $z_0 \in B_0 \subset B_\delta (y_0)$.
 \item
For all $i=0,\dots j$, $z_0 \in B_i$, and therefore, by the definition of $B_i$,
$z_i= f^{n_0+m_0+\dots +n_{i-1}+m_{i-1}}(z_0)\in f^{-n_i}(V_i) \subset B_\delta (y_i)$.
 \end{itemize}
To finish the proof we just point out that, the definition of $m_{j+1}^*$ depends of the size of the balls $U_{j+1}$ but not on the points $y_j$ themselves.
Therefore, if another pseudo-orbit is given with the same indexes $n_i,m_j$ the same choices of $n^*$ and $m_i^*$ will work.
\qed

\subsubsection{Remarks}
\begin{rem}
In the proof of Lemma \ref{lem:key1-alternative} given in Section \ref{proofshadowinglemma},  we have constructed windows $W^-_i, W^+_i,W^-_{i+1}$ in $V_\Lambda$ such that  $f^t(W^+_i) \subseteq V_\Lambda$ for all $0\leq t\leq m_i-k^+$,   so the corresponding
segment  of the shadowing orbit of $z_i$ stays in $V_\Lambda$ for this entire time.
Thus, the construction in the proof of the lemma enables one to find shadowing orbits that stay
close to $\Lambda$ for any sufficiently long time intervals, between two consecutive homoclinic excursions.
\end{rem}

\begin{rem}
Lemma \ref{lem:key1-alternative} provides a true forward orbit that shadows a given forward pseudo-orbit.
The current proofs do not allow for  immediately extending this result for bi-infinite orbits.
We remark that there is no assumption on the inner dynamics given by $f_{\mid \Lambda}$.
In the proof given in Section \ref{proofshadowinglemma}, the alignment of windows in the center directions was achieved by defining, at
each step of the construction, the center component of  $W^-_{i+1}$  as a ball inside some forward image of the center-component of $W^+_{i}$.
Thus, the consecutive balls in the center direction can get smaller and smaller in size as $i$ increases.
So if we try to continue the procedure in backwards time, the center-components of the windows $W^+_i$, $i\leq 0$, may get bigger and bigger in size. Thus, we may loose control on the shadowing trajectory, that is, the resulting shadowing orbit does not follow $\delta$-closely the prescribed pseudo-orbit.
\end{rem}

\begin{rem}\label{rem:comparisons} Statements related  to Lemma \ref{lem:key1-alternative} appear in \cite{DelshamsGR2016,DelshamsGR13,GideaR12}.
The main difference is that the statements in these papers assume certain geometric conditions on the inner dynamics.

There is also a related version of the Shadowing Lemma in \cite{GelfreichT2014}, but only for finite pseudo-orbits; moreover, those pseudo-orbits are subject to certain conditions that are very different from ours.
\end{rem}

\begin{rem}
It is interesting to note that the geometric proof of
Lemma~\ref{lem:key1-alternative} given in Section \ref{proofshadowinglemma2} works in infinite dimensions.
One only needs to substitute the compactness assumptions
by the assumption that the regularity of the maps -- and hence of
the manifolds are uniform.

Indeed,  infinite dimensional versions of the theory of
normally hyperbolic manifolds appear in \cite{BatesLZ08,ShatahZ03}.
An infinite dimensional version of the inclination lemma
appears in  \cite{LlaveOP}.
Note also that the nested balls arguments
also works in infinite dimensions when the space we consider
is reflexive (or the dual of Banach space).
It suffices to note that by Banach-Alaoglu
theorem, balls are compact in the weak${}^*$-topology.
\end{rem}

\appendix
\label{section:background}
\section{Normally hyperbolic invariant manifolds and the scattering map.}\label{subsection:NHIM}
In this section we recall the
background on normally hyperbolic invariant  manifolds
and  the  definition of
the scattering map and its geometric properties.

The main references for normally hyperbolic manifolds
are \cite{Fenichel71,Fenichel74,HirschPS77,Pesin04}.
Even if the definitions of \cite{Fenichel71,Fenichel74}
and \cite{HirschPS77} are not completely equivalent, the results
that we use are very basic and appear in both treatments
as well as in several subsequent treatments
\cite{BatesLZ00, BatesLZ08}.
The properties of the scattering map
appear  in \cite{DelshamsLS08a}.

Let $f:M\to M$ a $C^r$ map on a $C^r$-differentiable manifold $M$.
Assume that there exists a manifold $\Lambda\subseteq M$ that
is a normally hyperbolic invariant manifold for $f$.
We will assume that the derivatives of $f$ are uniformly continuous
and uniformly bounded
in a neighborhood of $\Lambda$. This is, of course, automatic if
$\Lambda$ is a compact manifold and many of the results
are stated only for compact manifolds, but as remarked in
\cite{HirschPS77,BatesLZ00,BatesLZ08}, only the uniform continuity and uniform
boundedness  is needed.

We recall that, following
\cite{Fenichel71,Fenichel74,HirschPS77, Pesin04} we say that
a  $\Lambda \subset M$ is a hyperbolic manifold if
exists a splitting of the tangent bundle of $TM$ into $Df$-invariant
sub-bundles
\[TM=E^u\oplus E^s\oplus T\Lambda,\] and there exist a constant $C>0$ and
rates \begin{equation}\label{rates0} 0<\lambda_+ < \eta_- \le 1 \le \eta_+ \le   \mu_-,\end{equation}
such that for all
 $x\in\Lambda$ we have
\begin{equation}\label{characterization}
\begin{split}
v\in E^s_x  &\Leftrightarrow \|Df^k_x(v)\|\leq C\lambda_+^k\|v\|  \textrm{ for all } k\geq 0,\\
v\in E^u_x  &\Leftrightarrow \|Df^k_x(v)\|\leq C\mu_-^{-k}\|v\|  \textrm{ for all } k\leq 0,\\
v\in T_x\Lambda &\Leftrightarrow \|Df^k_x(v)\|\leq C\eta_+^k \|v\|, \quad
\|Df^{-k}_x(v)\| \leq C\eta_-^{-k} \|v\|,
\textrm{ for all } k \geq 0.
\end{split}
\end{equation}

If $Df(x), Df^{-1}(x)$ are uniformly bounded, we have
that there are opposite inequalities, namely there exist $\lambda_-\leq \lambda_+$ and $\mu_+\geq\mu_-$ such that
\begin{equation} \label{complementary}
\begin{split}
v\in E^s_x  &\implies  \|Df^k_x(v)\|\geq C\lambda_-^k \|v\|  \textrm{ for all } k\geq 0,\\
v\in E^u_x  &\implies  \|Df^{k}_x(v)\|\geq  C\mu_+^{-k} \|v\|  \textrm{ for all } k\leq 0.\\
\end{split}
\end{equation}

Note that, of course, if the inequalities \eqref{characterization},
\eqref{complementary} hold for some rates, they also
hold for other rates $\tilde \lambda_\pm$, $\tilde \mu_\pm$,  $\tilde \eta_\pm$ satisfying \eqref{rates0}
such that
\begin{equation} \label{worsebounds}
[\lambda_-, \lambda_+] \subset  [\tilde \lambda_-, \tilde \lambda_+],\quad
[\mu_-, \mu_+] \subset  [\tilde \mu_-, \tilde \mu_+],\quad
[\eta_-, \eta_+] \subset  [\tilde \eta_-, \tilde \eta_+].
\end{equation}
Clearly, the bounds for the $\tilde \lambda_\pm, \tilde \mu_\pm.
\tilde \eta_\pm$ are less sharp than those for the original values.

If we change the metric in the manifold $M$ by an equivalent metric,
the rates $\lambda_\pm, \eta_\pm,   \mu_\pm $ are not altered, but
the constant $C$ can be modified. A standard construction
\cite{HirschPS77} shows that, for any rates that
satisfy \eqref{worsebounds} with strict inclusions, we can find a metric
(called \emph{adapted metric} ) equivalent to and as smooth as
the original one   in such a way that
$C = 1$ both in \eqref{characterization} and in
\eqref{complementary}.
 See \cite{CabreFL03b} for
a discussion of adapted metrics for \eqref{complementary}.
Hence, for  theoretical purposes (including in this paper)
one can assume that $C=1$ in both \eqref{characterization}  and \eqref{complementary}.

In the case when $f$ is symplectic, it is natural to consider
hyperbolic manifolds with the property that
\begin{equation} \label{restriction}
\begin{split}
& \eta_- = 1/\eta_+,\,\quad \lambda_+ = 1/\mu_-, \textrm{ and also } \\
&\lambda_- = 1/\mu_+.
\end{split}
\end{equation}
As shown in \cite{DelshamsLS08a}, normally hyperbolic invariant manifolds for
symplectic maps with the restricted exponents as in \eqref{restriction}
enjoy many geometric properties (e.g. the  map restricted to the manifold
is symplectic).
Note however that, even for symplectic
maps  there are normally hyperbolic invariant  manifolds
that satisfy the general definition but not \eqref{restriction}.
A notable example  is the stable manifold of  a NHIM, which is
normally hyperbolic according to the general definition
(this plays an important role in \cite{Fenichel71})
but does not satisfy \eqref{restriction}  and, indeed,
the map restricted to it is not symplectic.

Assume that there exists an integer $\ell>0$ such that
\[\ell\leq\min(r, \log\lambda_-^{-1} / \log \eta_+^{-1} ,  \log \eta_- /\log \mu_+ ).\]
Then $\Lambda$
is $C^\ell$-differentiable, and its stable and unstable manifolds
$W^s(\Lambda)$, $W^u(\Lambda)$ are $C^{\ell}$-differentiable
manifolds. See \cite{Robinson1971}.

The manifolds $W^s(\Lambda)$, $W^u(\Lambda)$  are foliated by stable and  unstable manifolds of points $W^s(z)$, $W^u(z')$ respectively,
with $z,z'\in\Lambda$, which are $C^r$-differentiable manifolds. The foliations are $C^{\ell-1}$-differentiable.
For each $x\in W^s(\Lambda)$
 there exists a unique $x^+ \in \Lambda$ such that $x \in W^s(x^+)$, and for each $x\in W^u(\Lambda)$ there exists a unique
$x^- \in \Lambda$  such that $x \in W^u(x^-)$.
We define the wave maps:
$$
\begin{array}{rcl}
\Omega^+ &:& W^s(\Lambda) \to \Lambda \ \mbox{by} \ \Omega^+(x) = x^+ \\
\Omega^- &:& W^u(\Lambda) \to \Lambda \ \mbox{by} \ \Omega^-(x) = x^-.
\end{array}
$$
The maps $\Omega^+$ and $\Omega^-$ are     $C^{\ell -1}$-smooth.

We assume that there exists a  transverse homoclinic  manifold  $\Gamma \subseteq M$, which is $C^{\ell-1}$-differentiable. This means that
  $\Gamma \subseteq W^u(\Lambda) \cap W^s(\Lambda)$
and, for each $x\in\Gamma $, we have
\begin{equation}\label{eqn:channel1}
\begin{split}
T_xM=T_xW^u(\Lambda)+T_xW^s(\Lambda),\\
T_x\Gamma =T_xW^u(\Lambda)\cap T_xW^s(\Lambda).
\end{split}
\end{equation}
We assume the additional conditions that for each  $x\in\Gamma $ we
have
\begin{equation}\label{eqn:channel2}
\begin{split}
T_xW^s(\Lambda)=T_xW^s(x^+)\oplus T_x \Gamma,\\
T_xW^u(\Lambda)=T_xW^u(x^-)\oplus T_x \Gamma,
\end{split}
\end{equation}
where $x^-,x^+$ are the uniquely defined points in $\Lambda$ corresponding to $x$.
Following \cite{DelshamsLS08a}, we say that $\Gamma$ is transverse to the foliations and therefore
$\Omega^-,\Omega^+$ restricted to $\Gamma $
are diffeomorphisms. We call $\Gamma $ an a homoclinic channel.
Hence,  we can define a scattering map
\[\sigma :\Omega^{-}(\Gamma) \to \Omega^{+}(\Gamma),\quad \sigma=\Omega^+\circ (\Omega^-)^{-1},\]
which is a diffeomorphism from $\Omega^{-}(\Gamma)$ to $\Omega^{+}(\Gamma)$.

If $\sigma(x^-)=x^+$, then there exits a unique $x\in\Gamma $ such that $W^u(x^-)\cap W^s(x^+)\cap \Gamma =\{x\}$.
Note that the backwards orbit $f^{-n}(x)$ of $x$ in $M$ is asymptotic to the backwards orbit $f^{-n}(x^-)$ in $\Lambda$, and the forward orbit $f^{m}(x)$ of $x$ in $M$ is asymptotic to the forward orbit $f^{m}(x^+)$ in $\Lambda$.

\section{Linearized coordinates}\label{sec:linearized_coordinates}
We will construct all windows used in Section \ref{proofshadowinglemma} in linearized coordinates, which will be recalled below, following  \cite{PughS70}.

Let $\Lambda$ be a normally hyperbolic invariant manifold for $f$ in $M$.
There exists an open neighborhood $V_{\Lambda}$ of $\Lambda$ in $M$, an open neighborhood $U_{\Lambda}$ of the zero section of
$(E^u\oplus E^s)_{ \mid \Lambda}$,
and a  homeomorphism $h$ from $U_{\Lambda}$ to $V_{\Lambda}$ such that
for every $(x^c,v^u, v^s)\in  (E^u\oplus E^s)_{ \mid \Lambda}$
\[
(h^{-1}\circ f\circ h)(x^c,v^u, v^s)=Nf(x^c,v^u, v^s)=(f_{\mid\Lambda}(x^c),Df(x^c)_{\mid E^u\oplus E^s}(v^u, v^s)).
\]
Via this coordinate system, each point $p\in V_{\Lambda}$ can be written uniquely through $(x^c,v^u,v^s)$ for some $x^c\in\Lambda$, $v^u\in E^u$, $v^s\in E^s$,
as $p= h(x^c,v^u,v^s)$.

In  the linearized coordinates,  the map $f$ is conjugate with the normal mapping $Nf_{ \mid E^u\oplus E^s }$ of $f$ in a neighborhood of $\Lambda$.
Hence, iterating a rectangle in these coordinates by the map $f$, for an arbitrary number of times, is equivalent to iterating the rectangle by the normal mapping $Nf$.

\section{Correctly aligned windows}\label{section:windows}
We review briefly the topological method of correctly aligned windows. We follow \cite{GideaZ04a} (see also \cite{GideaR03,GideaL06}).

\begin{defn}
An $(m_1,m_2)$-window in an $m$-dimensional manifold $M$, where $m_1+m_2=m$, is a
a $C^0$-homeomorphism $\chi$ from some open neighborhood $\textrm{dom}(\chi)$ of $[0,1]^{m_1}\times [0,1]^{m_2}$ in
$\mathbb{R}^{m_1}\times \mathbb{R}^{m_2}$ to an open subset $\textrm{im}(\chi)$  of $M$,
together with the homeomorphic image $W=\chi([0,1]^{m_1}\times [0,1]^{m_2})$, and with a choice of an `exit set' \[W^{\rm
exit} =\chi \left(\partial[0,1]^{m_1}\times [0,1]^{m_2} \right )\]
and  of an `entry set'  \[W^{\rm entry}
=\chi \left([0,1]^{m_1}\times
\partial[0,1]^{m_2}\right ).\]
\end{defn}

\begin{rem}\label{rem:windows_alternative}
Alternatively, we can define a window as a $C^0$-family of $m_1$-dimensional disks attached to an $m_2$-dimensional disk, i.e.,
\[W=\bigcup_{q\in D^{m_1}} D^{m_2}(q),\] with $D^{m_1}$ being some fixed $m_2$-dimensional disk, and  $D^{m_2}(q)$
being $m_1$-dimensional disks  depending in a $C^0$-fashion on $q\in  D^{m_1}$. In which case
 \begin{equation*}\begin{split}W^{\rm
exit} =\bigcup_{q\in D^{m_1}} \partial D^{m_2}(q)
\\ W^{\rm entry}=\bigcup_{q\in \partial D^{m_1}} D^{m_2}(q)
.\end{split}\end{equation*}
\end{rem}

In the sequel, when we refer to a window we mean the set $W$ together with the underlying local parametrization $\chi$.

\begin{defn}\label{defn:corr}
Let  $W_1$ and $W_2$ be $(m_1,m_2)$-windows, let $\chi_1$ and $\chi_2$ be the corresponding local parametrizations.
Let $f$ be a continuous
map on $M$ with $f(\textrm {im}(\chi_1))\subseteq \textrm
{im}(\chi_2)$, and let $f_\chi= \chi_2^{-1}\circ f\circ \chi_1$.  We say that $W_1$ is correctly aligned with
$W_2$ under $f$ if the following conditions are satisfied:
\begin{itemize}
\item[(i)] There exists a continuous homotopy $h:[0,1]\times ([0,1]^{m_1}\times [0,1]^{m_2})
 \to {\mathbb R}^{m_1} \times {\mathbb R}^{m_2}$,
   such that the following conditions hold true
   \begin{eqnarray*}
      h_0&=&f_\chi, \\
      h([0,1],\partial[0,1]^{m_1}\times [0,1]^{m_2}) \cap ([0,1]^{m_1}\times [0,1]^{m_2})&=& \emptyset, \\
      h([0,1],[0,1]^{m_1}\times [0,1]^{m_2}) \cap ([0,1]^{m_1}\times \partial[0,1]^{m_2})&=& \emptyset,
   \end{eqnarray*}

\item[(ii)] There exists $y_0\in[0,1]^{m_2}$ such that
the map $A_{y_0}:[0,1]^{m_1}\to\mathbb{R}^{m_1}$ defined by
$A_{y_0}(x)=\pi _{m_1}\left(h_{1}(x, y_0)\right )$ satisfies
\begin{eqnarray*}
A_{y_0}\left ( \partial[0,1]^{m_1}\right )\subseteq \mathbb
{R}^{m_1}\setminus [0,1]^{m_1},\\\deg({A_{y_0}},0)\neq 0,\end{eqnarray*}
where $\pi_{m_1}: \mathbb{R}^{m_1}\times \mathbb{R}^{m_2}\to
\mathbb{R}^{m_1}$ is the  projection onto the first
component, and $\deg(\cdot, 0)$ is the Brouwer degree of a map  at
$0$.
\end{itemize}
\end{defn}

The following is a shadowing lemma type of result for correctly aligned windows.
\begin{thm}
\label{theorem:detorb} Let $f:M\to M$ be a homeomorphism, $W_i$ be a collection of
$(m_1,m_2)$-windows in $M$,  and $\{t_i\}$ be a collection of positive integers, where $i\in\mathbb{Z}$.
If $W_i$ is correctly aligned with $W_{i+1}$ under $f^{t_i}$ for each $i$,  then
there exists a point $p\in W_0$ such that
\[(f^{t_i}\circ \ldots\circ f^{t_0})(p)\in W_{i+1} \textrm { for all } i.\]
Moreover, if for some $k>0$ we have $t_{i+k}=t_i$ and $W_{i+k}=W_{i}$ for all $i$, then the
point $p$ can be chosen periodic of period  $t_0+\ldots+t_{k-1}$.
\end{thm}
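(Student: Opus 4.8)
The plan is to prove Theorem~\ref{theorem:detorb} by the topological machinery of correctly aligned windows, reducing everything to a Brouwer degree computation, essentially as in \cite{GideaZ04a}. I would organize it in three parts: a composition lemma for correct alignment, a degree argument handling a finite chain (and a closed loop), and a compactness argument to pass to infinite chains.

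The first and most technical step is the \emph{composition lemma}: if $W_0$ is correctly aligned with $W_1$ under a continuous map $g$ and $W_1$ is correctly aligned with $W_2$ under $g'$ (with the relevant inclusions of images into domains), then $W_0$ is correctly aligned with $W_2$ under $g'\circ g$. In the coordinates of Definition~\ref{defn:corr} this is carried out by concatenating the two homotopies $h$ (from $g_\chi$) and $h'$ (from $g'_\chi$): one first homotopes $(g'\circ g)_\chi=g'_\chi\circ g_\chi$ to $g'_\chi\circ h_s$, then to $h'_s$ applied to the terminal map, checking throughout that the image of the exit set $\partial[0,1]^{m_1}\times[0,1]^{m_2}$ stays outside the unit cube and the image of the cube avoids the entry boundary $[0,1]^{m_1}\times\partial[0,1]^{m_2}$ — these are exactly the two emptiness conditions, and they are preserved because the exit set of $W_1$ maps outside $[0,1]^{m_1}\times[0,1]^{m_2}$ and the full cube of $W_1$ misses its entry boundary. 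The nonzero-degree condition (ii) for $g'\circ g$ is inherited from the outermost map $g'$. Iterating, every finite chain $W_0\Rightarrow W_1\Rightarrow\cdots\Rightarrow W_n$ collapses, so the relation composes.

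Next I would treat a finite chain by a degree argument on a product of cubes. Writing $Q=[0,1]^{m_1}\times[0,1]^{m_2}$ and $g_i=\chi_{i+1}^{-1}\circ f^{t_i}\circ\chi_i$ for the transported maps, I consider, in the periodic case of period $k$, the map on $\prod_{i=0}^{k-1}Q$ given by
\[ F(z_0,\dots,z_{k-1})=\bigl(g_0(z_0)-z_1,\ g_1(z_1)-z_2,\ \dots,\ g_{k-2}(z_{k-2})-z_{k-1},\ g_{k-1}(z_{k-1})-z_0\bigr), \]
and, in the non-periodic finite case, the analogous map in which the last entry is $g_{N-1}(z_{N-1})-b_N$ with $b_N$ a fixed interior point of $Q$ (the stable coordinate of $z_0$ being frozen so that the dimensions match). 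Using the homotopies supplied by the covering relations to deform each $g_i$ to its model form — expanding and of nonzero degree in the $m_1$-directions, collapsing onto an interior point in the $m_2$-directions — the map $F$ becomes block–triangular, and one verifies that the deformation never creates a zero of $F$ on the boundary of $\prod Q$ (this is again precisely the content of the emptiness conditions in Definition~\ref{defn:corr}). Hence, by homotopy invariance, $\deg(F,\prod Q,0)$ equals a product of the nonzero Brouwer degrees from condition (ii) and of $\pm1$ factors from the stable blocks, so it is nonzero and $F$ has a zero $(z_0^*,\dots)$. This zero gives $z_i^*\in Q$ with $g_i(z_i^*)=z_{i+1}^*$, i.e.\ a point $p=\chi_0(z_0^*)\in W_0$ with $(f^{t_i}\circ\cdots\circ f^{t_0})(p)\in W_{i+1}$; in the periodic case $g_{k-1}(z_{k-1}^*)=z_0^*$ shows $p$ is an $f$-periodic point of period $t_0+\cdots+t_{k-1}$ whose orbit visits all the windows.

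Finally, for an infinite (one- or two-sided) chain I would set $K_N=\{p\in W_0: (f^{t_i}\circ\cdots\circ f^{t_0})(p)\in W_{i+1}\text{ for }|i|\le N\}$ (using that $f$ is a homeomorphism to make the backward conditions closed as well). Each $K_N$ is nonempty by the finite-chain case applied to the sub-chain $W_{-N},\dots,W_N$, is closed in the compact set $W_0=\chi_0([0,1]^{m_1}\times[0,1]^{m_2})$, and $K_{N+1}\subseteq K_N$; by the finite intersection property $\bigcap_N K_N\ne\emptyset$, and any point in it realizes the full itinerary. The step I expect to be the main obstacle is the degree bookkeeping in part two: verifying carefully that the homotopies from Definition~\ref{defn:corr} can be inserted into the product map $F$ without producing a boundary zero, and that after the homotopy the degree genuinely factors as the asserted nonzero product. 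In practice, since the paper follows \cite{GideaZ04a}, I would quote the covering-relation calculus from there and only indicate these verification points.
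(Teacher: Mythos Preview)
The paper does not actually prove Theorem~\ref{theorem:detorb}: it is stated in the Appendix as a standard tool of the correctly-aligned-windows method, with the reader referred to \cite{GideaZ04a} (and \cite{GideaR03,GideaL06}) for the proof. Your outline is precisely the argument in those references --- the composition/product lemma, the Brouwer-degree computation on a product of cubes for finite and periodic chains, and the compactness (nested closed sets) passage to the infinite case --- so there is nothing to compare against within the paper itself; your proposal is the expected proof and is essentially correct. One small point worth tightening: in the composition step, the degree you obtain is the \emph{product} of the individual degrees (by the multiplicativity of the Brouwer degree, each factor nonzero by condition~(ii)), not just the degree of the last map; this is what makes the product degree in step two nonzero.
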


The correct alignment satisfies a natural product property.
Given two windows and a map, if each window can be written as a product of window components, and if the components of the first
window are correctly aligned with the corresponding components of
the second window under the appropriate components of the map, then
the first window is correctly aligned with the second window under
the given map. The details can be found in \cite{GideaL06}.

We describe the product property in a special case,  which corresponds to the situation considered in the paper.

Let $f:M\to M$ be a homeomorphism of the $m$-dimensional manifold $M$.
Denote by $B^k_\rho(x)$ the $k$-dimensional closed ball of radius $\rho$ centered at the point $x$ in $\mathbb{R}^k$.
Assume that $c,u,s\in\mathbb{N}$ are such that $c+u+s=m$, and write each $x\in\mathbb{R}^m$ as $x=(x^c,x^u,x^s)$, with $x^c\in\mathbb{R}^c$, $x^u\in\mathbb{R}^u$, and $x^s\in\mathbb{R}^s$. Let $p_1$, $p_2$ be two points in $M$, and let $\chi_1,\chi_2$ be two systems of local coordinates about $p_1,p_2$, respectively. Relative to these coordinate systems, we write $p_1=(p^c_1,p^u_1,p^s_1)$ and $p_2=(p^c_2,p^u_2,p^s_2)$.

\begin{lem}\label{ex:product}
Given two sets, $W_1$ in the local chart around $p_1$, and $W_2$ in the local chart around $p_2$, such  that,
in the corresponding local coordinates, we have
\[
\begin{split}W_1=B^{c}_{\rho^c_1}(p_1^c)\times B^{u}_{\rho^u_1}(p^u_1)\times B^{s}_{\rho^s_1}(p^s_1),\\
W_2=B^{c}_{\rho^c_2}(p^c_2)\times B^{u}_{\rho^u_2}(p^u_2)\times B^{s}_{\rho^s_1}(p^s_2),
\end{split}
\]
for some $\rho^c_1, \rho^u_1, \rho^s_1, \rho^c_2, \rho^s_2, \rho^u_2>0$.
Let
\[
\begin{split}
W_1^\xt=&{\quad}\partial B^{c}_{\rho^c_1}(p_1^c)\times B^{u}_{\rho^u_1}(p^u_1)\times B^{s}_{\rho^s_1}(p^s_1)\\
& \cup  B^{c}_{\rho^c_1}(p_1^c)\times \partial B^{u}_{\rho^u_1}(p^u_1)\times B^{s}_{\rho^s_1}(p^s_1),\\
W_1^\nt=&B^{c}_{\rho^c_1}(p_1^c)\times B^{u}_{\rho^u_1}(p^u_1)\times \partial B^{s}_{\rho^s_1}(p^s_1),\\
W_2^\xt=&{\quad}\partial B^{c}_{\rho^c_2}(p_2^c)\times B^{u}_{\rho^u_2}(p^u_2)\times B^{s}_{\rho^s_2}(p^s_2)\\
& \cup  B^{c}_{\rho^c_2}(p_2^c)\times \partial B^{u}_{\rho^u_2}(p^u_2)\times B^{s}_{\rho^s_2}(p^s_2),\\
W_2^\nt=&B^{c}_{\rho^c_2}(p_2^c)\times B^{u}_{\rho^u_2}(p^u_2)\times \partial B^{s}_{\rho^s_2}(p^s_2).
\end{split}
\]

Assume that the map $f$, written in local coordinates,  satisfies the following conditions relative to $W_1$ and $W_2$:
\[
\begin{split}
\pi_c\circ f(B^{c}_{\rho^c_1}(p_1^c)\times\{p^u_1\}\times \{p^s_1\})\supseteq   B^{c}_{\rho^c_2}(p_2^c),\\
\pi_u\circ f(\{p_1^c\}\times B^{u}_{\rho^u_1}(p^u_1)\times \{p^s_1\})\supseteq   B^{u}_{\rho^u_2}(p^u_2),\\
\pi_s\circ f(\{p_1^c\}\times \{p^u_1\}\times B^{s}_{\rho^s_1}(p^s_1))\subseteq   B^{s}_{\rho^s_2}(p^s_2),
\end{split}
\]
where $\pi_c,\pi_u,\pi_s$ denote the standard projections onto $\mathbb{R}^c$, $\mathbb{R}^u$, $\mathbb{R}^s$ respectively.

Then $W_1$ and $W_2$ are $(c+u,s)$-windows, and $W_1$ is correctly aligned with $W_2$ under~$f$.
\end{lem}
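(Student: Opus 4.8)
The plan is to deduce the correct alignment from the \emph{product property} of correctly aligned windows established in \cite{GideaL06}, by organizing the three blocks $\mathbb{R}^c,\mathbb{R}^u,\mathbb{R}^s$ as two ``covering'' factors and one ``contraction'' factor. First I would normalize: composing $\chi_1$ and $\chi_2$ with affine homeomorphisms, I may assume that, in coordinates, $W_1=W_2=[-1,1]^c\times[-1,1]^u\times[-1,1]^s$, with $p_1$ and $p_2$ at the origin, and with the exit sets equal to the boundary in the $(c,u)$-directions and the entry sets equal to the boundary in the $s$-direction. Under the obvious identification of $[-1,1]^c\times[-1,1]^u\times[-1,1]^s$ with $[0,1]^{m_1}\times[0,1]^{m_2}$ for $m_1=c+u$, $m_2=s$, this shows at once that $W_1,W_2$ are $(c+u,s)$-windows, with the prescribed exit/entry sets matching $\chi(\partial[0,1]^{m_1}\times[0,1]^{m_2})$ and $\chi([0,1]^{m_1}\times\partial[0,1]^{m_2})$.

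Writing $f_\chi=(F_c,F_u,F_s)$ in these coordinates, the three hypotheses say that $x^c\mapsto F_c(x^c,0,0)$ maps $[-1,1]^c$ onto a superset of $[-1,1]^c$ (so, in the covering form in which it is used, it is a degree $\pm1$ covering of the $c$-cube throwing $\partial[-1,1]^c$ off the interior), that $x^u\mapsto F_u(0,x^u,0)$ does the same on the $u$-cube, and that $x^s\mapsto F_s(0,0,x^s)$ carries $[-1,1]^s$ into the $s$-cube. The heart of the proof is to construct the homotopy $h$ of Definition \ref{defn:corr}(i) joining $f_\chi$ to a product model $g(x^c,x^u,x^s)=(g_c(x^c),g_u(x^u),g_s(x^s))$, with $g_c,g_u$ affine expansions of the $c$- and $u$-cubes and $g_s$ an affine contraction into the interior of the $s$-cube. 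I would do this by concatenating (and reparametrizing) two homotopies: a ``freezing'' homotopy $h_t=\bigl(F_c(x^c,(1-t)x^u,(1-t)x^s),\,F_u((1-t)x^c,x^u,(1-t)x^s),\,F_s((1-t)x^c,(1-t)x^u,x^s)\bigr)$, which at $t=1$ reaches the product map built from the center slices, followed by a straight-line homotopy from that product map to $g$. Along the whole homotopy one must verify the two conditions of Definition \ref{defn:corr}(i): that the image of the $(c,u)$-boundary of $W_1$ never meets $W_2$, and that the image of $W_1$ never meets the $s$-boundary of $W_2$. These hold because the $c$- and $u$-components keep a boundary point of the respective cube strictly outside while the $s$-component stays within the $s$-cube, and compactness of the cubes makes the estimates uniform in $t$.

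For Definition \ref{defn:corr}(ii) I would take $y_0$ to be the center of the $s$-cube; then $A_{y_0}=\pi_{c,u}\circ h_1(\cdot,y_0)=g_c\times g_u$ on $[-1,1]^{c+u}$, so $A_{y_0}(\partial[-1,1]^{c+u})$ lies outside $[-1,1]^{c+u}$ and $\deg(A_{y_0},0)=\deg(g_c,0)\cdot\deg(g_u,0)=\pm1\neq0$. This yields that $W_1$ is correctly aligned with $W_2$ under $f$. The step I expect to be the main obstacle is the middle one: checking that conditions (i)--(ii) persist along the freezing homotopy, i.e. that moving the transverse variables towards the center neither drags the $c$- or $u$-boundary back into $W_2$ nor pushes the $W_1$-image across the $s$-boundary of $W_2$. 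This is precisely the content of the product property of \cite{GideaL06}, and it is automatic in the situations used in this paper, since there the relevant map — expressed in the linearized coordinates $h_{lin}$ — is literally the product map $Tf$ (and, in the almost linearized coordinates $h$, a small perturbation of it), so the off-center slices behave like the central one.
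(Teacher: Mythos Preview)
The paper does not actually prove this lemma; it simply records that it ``is an immediate consequence of Proposition~3 in \cite{GideaL06}''. Your proposal is, in effect, a sketch of how that product-property argument would run, and for the one step you single out as the main obstacle you end up invoking the same reference. So the two treatments coincide in approach; you have just added detail around the citation.

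One caution about that added detail. Your passage from ``$\pi_c\circ f$ of the $c$-slice contains $B^{c}_{\rho^c_2}(p_2^c)$'' to ``a degree $\pm1$ covering throwing $\partial[-1,1]^c$ off the interior'' is not justified by the stated hypotheses: containment of the image does not force the boundary outside, and for the same reason your freezing homotopy need not keep the exit set of $W_1$ disjoint from $W_2$. Concretely, with $c=u=s=1$, $W_1=W_2=[-1,1]^3$ after normalization, and $f(x^c,x^u,x^s)=(2x^c+100x^u,\,2x^u,\,\tfrac{1}{10}x^s)$, the three slice hypotheses of the lemma hold, yet $f(-1,0.02,0)=(0,0.04,0)\in W_2$, so condition~(i) of Definition~\ref{defn:corr} already fails at $t=0$ and $W_1$ is \emph{not} correctly aligned with $W_2$. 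Thus the lemma, as literally stated, must be read with the tacit convention you yourself identify in your final paragraph: that in the relevant coordinates $f$ is (a small perturbation of) an honest product of expansions on the $c$- and $u$-factors and a contraction on the $s$-factor, as it always is in the constructions of Subsection~\ref{subsection:proofkey1}. That is precisely the setting of Proposition~3 in \cite{GideaL06}, which is what both you and the paper are ultimately citing.
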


This lemma is an immediate consequence of Proposition 3 in  \cite{GideaL06}.

\section{An example of D. Turaev}\label{Turaev}
We are very grateful to  Dmitry Turaev who provided to us an example that shows that
a  `uniform-time' version of the shadowing lemma analogue of Lemma \ref{lem:key1-alternative}
is not true in general.

This example shows that the requirement  that \[m_i \ge m_i^*(n_0, \ldots,n_{i-1}, m_0,\ldots,m_{i-1})\] in Lemma \ref{lem:key1-alternative} cannot be
replaced by $m_i \ge m^*$, where $m^*$ is a constant.

\begin{ex}\label{ex:Turaev}
Let $M=\mathbb{R}^3$, $f:M\to M$ be a $C^1$-map,   and $\Lambda$ be a straight line in $M$ that is a normally hyperbolic invariant manifold for $f$ as follows.
There exists a system of coordinates  $(x,u,v)$ in a neighborhood $V$ of $\Lambda$ in $M$, with $x\in \mathbb{R}$ representing the coordinate on $\Lambda$,
and $u\in \mathbb{R}$ and $v\in \mathbb{R}$  the  contracting and expanding directions, respectively, and a corresponding open set
$U\subseteq \mathbb{R}^3$ of the form
\[
U=\{(x,u,v)\,:\,x\in\mathbb{R},  |u|<3/2,|v|<3/2\},
\]  such that for $p=(x,u,v)\in U$, the map $f$ is of the form $f(x,u,v,x)= (x',u',v')$ where
\begin{equation} \label{turaev1}
x'=x+(uv)^2, \,  u'=u/2,\,       v'= 2v.
\end{equation}
Thus $\Lambda$ corresponds to $u=v=0$,  and for each point $p=(x,0,0)\in\Lambda$, $W^s(p)=\{(x,u,0)\}$ and $W^u(p)=\{(x,0,v)\}$.
Moreover, $f_{\mid\Lambda}=\textrm{Id}$, and $f(W^{s,u}(p))=W^{s,u}(p)$ for every $p\in\Lambda$, that is, $f$ leaves invariant the stable and unstable fibers.

Assume that $W^u(\Lambda)$ and $W^s(\Lambda)$ intersect transversally along a homoclinic manifold
$$
\Gamma^-=\{(x,u,v)\,: \,u=0, v=1\},
$$
which is a line, and that for some power $q>0$ the map $f^q$ is of the form $f^q(x,u,v)=(x'',u'',v'')$,
\begin{equation} \label{turaev2}
x''=x+1,\, u''= 1+u, \,     v''=v-1.
\end{equation}
Thus  $f^q (\Gamma^-)=\Gamma^+ =\{(x,u,v)\,:\, u=1,v=0\}$, and the
corresponding scattering map $\sigma:\Lambda\to\Lambda$ is of the form
\begin{equation} \label{turaev3}
\sigma(x,0,0)=(x+1,0,0).
\end{equation}

Assume that for every $\delta>0$ there exists $n^*$ such that for every pseudo-orbit $y_{i+1}=f^{n_i}\circ\sigma(y_i)$ with $n_i\geq n^*$
there exists a true orbit $z_{i+1}=f^{n_i}(z_i)$ such that $d(z_i,y_i)<\delta$ for all $i$.
Take $\delta>0$  small and a corresponding $n^*$  sufficiently large.
Choose and fix a pseudo-orbit
$y_{i+1}=f^{n_i} \circ \sigma(y_i)$   with $n_i=n=n^*$ for all $i\geq 0$.
Let $y_i=(x_i,0,0)$.
Since $f_{\mid\Lambda}=\textrm{id}$ and $\sigma$ shifts the $x$-coordinate by $1$-unit, we have
\begin{equation} \label{turaev4}
x_{i+1}= x_i+1,
\end{equation}

Assume that there is an orbit $z_{i+1}=f^{n}(z_i)$ whose points are $\delta$-close to the corresponding points $y_i$. Let $z_i=(x'_i, u_i,  v_i)$.
Shadowing would imply that for all  $i\geq 0$ we have
\begin{equation} \label{turaev5}
|x'_i - x_i|<\delta,  \,   |u_i|<\delta, \,    |v_i|<\delta.
\end{equation}
First we show  that one of the iterations $f^k(z_i)$ with $0<k <n$ must lie outside the neighborhood $U$
of $\Lambda$  (i.e., the orbit between $z_i$  and $z_{i+1}$ makes an excursion along the homoclinic).
Indeed, note that as long as the orbit of $z_i$ lies in the neighborhood $U$ the product $uv$  stays constant by \eqref{turaev1},
so if the orbit between $z_i$ and $z_{i+1}$ stays in $U$ for all time, then
\[
x'_{i+1}=x'_i+n(u_i v_i)^2.
\]
Also by \eqref{turaev1}, we must have
$|v_i|< \frac{3}{2^{n+1}}$, so
$|x'_{i+1}-x'_i| < \frac{9n}{4^{n+1}}u_i^2$
which contradicts   \eqref{turaev5} and \eqref{turaev5} if $\delta$
is small. Thus, the orbit between $z_i$  and $z_{i+1}$ must leave at some point $f^{k+1}(z_i)$ the neighborhood $U$.


Thus, for each $i\geq 0$ there exists a positive integer $k_i<n-q$ such that the first $k_i$ iterations of $z_i$ stay in $U$, the next $q$ iterations stay outside $U$ following the homoclinic and returning to $U$, and the last $n-q-k_i$ iterations stay in $U$ again.
For the first $k_i$ iterations
the product $uv$ stays equal to $u_i v_i$, and for the last $n-q-k_i$ iterations the product stays constant and equals
to $u_{i+1} v_{i+1}$ (see \eqref{turaev1}).
Thus, by \eqref{turaev1}, \eqref{turaev2}
\begin{equation}\label{turaev6}
x'_{i+1} = x'_i +1 + k_i (u_i v_i)^2 +  (n-q-k_i) (u_{i+1} v_{i+1})^2
\end{equation}

Using \eqref{turaev1}, since $z_i$ leaves $U$ after $k_i<n-q$   forward iterations  we have
\[
|v_i| > {3}/{2^{n-q+2}} \textrm { for all }   i,
\]
and since   $z_i$ leaves $U$ after $n-q-k_i\leq n-q$ negative iterations
\[
|u_i| > {3}/{2^{n-q+2}} \textrm { for all }   i.
\]

Therefore,
\[
(u_i v_i)^2  > {1}/{2^{4n}} \textrm { for all }   i,
\]
hence \eqref{turaev6} implies
\[
x'_{i+1} > x'_i +1 + {(n-q)}/{2^{4n}}\geq  x'_i +1 + {1}/{2^{4n}},
\]
thus
\[
x'_{i+1} > x'_0 + i  + i({1}/{2^{4n}}).
\]

By  \eqref{turaev4}  \[x_i= x_0 +i,\]  the distance between  $x'_i$  and $x_i$ grows without bound as $i$ grows,
for any choice of $x_0$ and $x'_0$, so the shadowing property will be broken after finitely many iterations, for any choice of the constant $n$.
\end{ex}

\begin{rem}
The idea of this counter-example is that the dynamics off $\Lambda$ differs from the dynamics restricted to $\Lambda$
by some fixed amount of shift that depends on the $u$-, $v$-coordinates of a point.
Thus the shift between the points of the pseudo-orbit (which lie on $\Lambda$) and the points of a shadowing orbit that takes the
same number of iterates between successive points keeps increasing by the  fixed amount of shift at every step,
and the pseudo-orbit and the shadowing orbit end up  being far apart.
If we allow that the number of  iterates between successive points of the pseudo-orbit to vary, as it is the case in
Lemma  \ref{lem:key1-alternative}, we can arrange that the shadowing orbit gets closer and closer to $\Lambda$,
which makes the amount of shift between its points and the points of the pseudo-orbit getting smaller and smaller at every step.
More precisely, in the above example   we consider a shadowing orbit $z_{i+1}=f^{n_i}(z_i)$ with $n_i$ sufficiently large and depending on $i$,
the above estimates  yield an error term between $x'_{i+1}$ and $x_{i+1}$ of the order
\[\sum_{k=0}^{i} {n_i}/{2^{4n_i}},\]
which can be made arbitrarily small by choosing,  for instance,   $n_0$ sufficiently large and $n_{i}$ increasing at a linear rate.\end{rem}

\begin{rem} The proof of Lemma  \ref{lem:key1-alternative} uses the existence of a linearized system of coordinates $h$ in a neighborhood of
$\Lambda$ (cf. \cite{PughS70}). In the Example \ref{ex:Turaev}, such a system of coordinates is given
by \[h(x,u,v)=\left\{
                \begin{array}{ll}
                  \left(x+\frac{\ln|u|-\ln|v|}{2\ln 2}(uv)^2,u,v\right), & \hbox{$\textrm { for } u,v\neq 0$;} \\
                  (x,u,0), & \hbox{$\textrm { for } v\neq 0$}\\
                  (x,0,v), & \hbox{$\textrm { for } u\neq 0$},\\
                  (x,0,0), & \hbox{$\textrm { for } u=v=0$.}
                \end{array}
              \right.\]
Indeed, note that \begin{equation*}\begin{split}h\circ f(x,u,v)=&h(x+(uv)^2,u/2, 2v)\\=&\left(x+(uv)^2+\left(\frac{\ln |u|-\ln|v|}{2\ln2}-1\right)(uv)^2,u/2,2v\right)\\=&\left(x+\left(\frac{\ln |u|-\ln|v|}{2\ln2}\right)(uv)^2,u/2,2v\right)\\=&Nf\circ h(x,u,v).\end{split}\end{equation*}.
\end{rem}

\bibliographystyle{alpha}
\bibliography{diff}
\end{document}